\newtheorem{thm}{Theorem}[section]
\newtheorem{cor}[thm]{Corollary}
\newtheorem{lem}[thm]{Lemma}
\newtheorem{prop}[thm]{Proposition}
\newtheorem{claim}[thm]{Claim}
\newtheorem{quest}[thm]{Question}
\newtheorem{conj}[thm]{Conjecture}
\setlist[enumerate]{itemsep=2ex, topsep=2ex} %spaces out enumerate/itemize better
\setlist[itemize]{itemsep=2ex, topsep=2ex}
\newcommand{\Z}{\mathbb{Z}}
\newcommand{\del}{\delta}
\renewcommand{\l}{\left}
\renewcommand{\r}{\right}
\newcommand{\half}{\frac{1}{2}}
\newcommand{\sub}{\subseteq}
\newcommand{\ignore}[1]{}
\newcommand{\tr}[1]{\textrm{#1}}
\newcommand{\rec}[1]{\frac{1}{#1}}
\newcommand{\f}[2]{\frac{#1}{#2}}
\newcommand{\floor}[1]{\l\lfloor #1\r\rfloor}
\newcommand{\ceil}[1]{\l\lceil #1\r\rceil}
\newcommand{\Graph}[1]{\includegraphics{#1}}%For argument just put name of picture that's in file called "TexImages"
\newcommand{\quart}{\frac{1}{4}}
\newcommand{\al}{\alpha}
\newcommand{\be}{\beta}
\newcommand{\n}{(n,\al,\be)}
\newcommand{\s}{s^{\al,\be}}
\newcommand{\fibs}{s^{1,1}}
\newcommand{\w}{w^{\al,\be}}
\newcommand{\fibw}{w^{1,1}}
\newcommand{\g}{g^{\al,\be}}
\renewcommand{\a}{a^{\al,\be}}
\renewcommand{\b}{b^{\al,\be}}
\renewcommand{\t}{t^{\al,\be}}
\newcommand{\gam}{\gamma}
\newcommand{\Gam}{\gam_{\al,\be}}
\newcommand{\Om}{\Omega}
\newcommand{\lam}{\lambda}
\newcommand{\Lam}{\lam_{\al,\be}}
\renewcommand{\S}{\mathbf{S}}
\renewcommand{\ss}{\mathbf{s}}
\newcommand{\I}{\mathbf{I}}
\newcommand{\E}{\mathbf{E}}
\newcommand{\sm}{\setminus}
\newcommand{\GAM}{\Gamma}
\title{Slow Recurrences}
\author{
	Sam Spiro\footnote{Dept.\ of Mathematics, UCSD 
		{\tt sspiro@ucsd.edu}.  This material is based upon work supported
		by the National Science Foundation Graduate Research Fellowship under Grant No. DGE-1650112.}
}
\date{\today}
\address
{Department of Mathematics \newline \indent
	University of California, San Diego \newline \indent
	La Jolla, CA, 92093-0112, USA}\fi
\begin{document}
	\maketitle
	\begin{abstract}
		For positive integers $\alpha$ and $\be$, we define an $(\alpha,\beta)$-walk to be any sequence of positive integers satisfying  $w_{k+2}=\alpha w_{k+1}+\be w_k$.  We say that an $(\alpha,\beta)$-walk is $n$-slow if $w_s=n$ with $s$ as large as possible.  Slow $(1,1)$-walks have been investigated by several authors.  In this paper we consider $(\al,\be)$-walks for arbitrary positive $\alpha,\beta$.  We derive a characterization theorem for these walks, and with this we prove several results concerning the total number of $n$-slow walks for a given $n$.  In addition to this, we study the slowest $n$-slow walk for a given $n$ amongst all possible $\al,\beta$.
	\end{abstract}
	\section{Introduction}
	Let $\al,\be$ be relatively prime positive integers.  Given a pair of positive integers $(a_1,a_2)$, we define its associated $(\al,\be)$-walk $\w_k=\w_k(a_1,a_2)$ to be the sequence with $\w_1=a_1$, $\w_2=a_2$, and $\w_{k+2} = \al \w_{k+1} + \be \w_k$ for $k \geq 1$. %We note that one could consider $(\al,\be)$-walks with $d=\gcd(\al,\be)\ne 1$, but this is essentially equivalent to studying $(\al/d,\be/d)$-walks.
	
	Given $n$, we are interested in finding $(\al,\be)$-walks which have $n=\w_k$ for some $k$.  Trivial examples of this include $\w_k(x,n)$ for any $x$.  In order to make the problem more interesting, we will demand that our walks generate $n$ as slowly as possible.  To this end, define $\s(n;a_1,a_2)$ to be the integer $s$ such that $\w_s(a_1,a_2) = n$, with this value being $-\infty$ if no such $s$ exists. 
	Let $\s(n) = \displaystyle\max_{a_1,a_2 \geq 1} \s(n;a_1,a_2)$. We will say that $(a_1,a_2)$ is $\n$-good if $a_1, a_2 \geq 1$ and if $\s(n) = \s(n;a_1,a_2)$.
	If $(a_1,a_2)$ is an $\n$-good pair, then we will say that its associated sequence $\w_k(a_1,a_2)$ is an $n$-slow $(\al,\be)$-walk.
	
	For example, it is easy to verify that $\fibs(6)=4$ and that the only 6-slow  (1,1)-walks are $\fibw_k(2,2)$ and $\fibw_k(4,1)$.  As another example, observe that $\s(1)=2$ for all $\al,\be \ge 1$ since $\w_3(a_1,a_2)=\al a_1+\be a_2\ge 2$ when $a_1,a_2\ge 1$.  Thus for all $\al,\be$ we have that $(x,1)$ is a $(1,\al,\be)$-good pair for all $x$.  Similarly, if $\s(n)=2$, then $(x,n)$ will be an $\n$-good pair for all $x$.  Because of this, the only $n$ which have ``interesting'' $n$-slow walks are those with $\s(n)>2$.
	
	The case $\al=\be=1$ has been considered several times \cite{S1, S2, S3}, and a number of results for this case have been produced independently by several authors \cite{CGS, EB, JK}.  In particular, one can prove a certain ``characterization theorem'' that completely classifies $(n,1,1)$-good pairs given any choice of $n$.
	
	The main goal of this paper is to prove a generalized characterization theorem for $(\al,\be)$-walks.  When $\be=1$ this result and proof is quite similar to the $\al=\be=1$ case.  However, for $\be>1$ the situation becomes somewhat more complicated.  To state our result, we first define a generalization of the Fibonacci numbers.  Let $\g_k$ denote the sequence with
	\[
		\g_1=1,\ \g_2=\al,\]\[ \g_{k+2}=\al\g_{k+1}+\be\g_k\tr{ for }k\ge 1.
	\]
	Also define \[\Gam=\half(\al+\sqrt{\al^2+4\be}),\ \Lam=\half (\al-\sqrt{\al^2+4\be}).\] 
	Note that if $f_k$ denotes the $k$th Fibonacci number and $\phi$ denotes the golden ratio, then we have $g_k^{1,1}=f_k$ and $\gam_{1,1}=\phi$.

	\begin{thm}\label{T-GMain}
		Let $\al,\be$ be relatively prime positive integers.  Let $n$ be such that $s=\s(n)>2$.  Then there exists unique integers $a=\a(n),\ b=\b(n)$, and $t=\t(n)$ satisfying the following.
		\begin{itemize}
			\item $n=a\g_t+\be b\g_{t-1}$.
			\item $t\ge 2,\ a\le (\be-1)\g_{t+1}+\al b,$ and $b\le \g_{t}$.
			\item $a-\al b- \ell \g_{t+1}$ is not a positive multiple of $\be$ for any $\ell \ge 0$.
		\end{itemize}
		Further, we have the following.
		\begin{itemize}
			\item The pair $(b,a)$ is $\n$-good and $s=t+1$.
			\item We have $|\w_{s+1}(b,a)-\Gam n|=|\Lam^t(\Gam b-a)|\le 2\be^{t+1}$.
			\item A pair $(b',a')$ of positive integers is $(n,\al,\be)$-good if and only if $(b',a')=(b+kg_t^{\al,\be},a-k\be g_{t-1}^{\al,\be})$ for some $k\ge 0$.   \iffalse In particular, there are more than $p$ distinct $(n,\al,\be)$-good pairs if and only if $a>\be p \g_{t-1}$.\fi 
			\item With $a',b',k$ as above, we have $\w_{s+1}(b',a')-\w_{s+1}(b,a)=k(-\be)^{t}$.
		\end{itemize}
	\end{thm}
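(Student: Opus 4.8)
The idea is to reduce everything to understanding, for each $m\ge 2$, the set $P_m(n)=\{(a_1,a_2)\in\Z^2_{\ge1}:\w_m(a_1,a_2)=n\}$. An easy induction on the recurrence gives the closed form $\w_m(a_1,a_2)=\g_{m-1}a_2+\be\g_{m-2}a_1$ for $m\ge2$, where we put $\g_0:=0$. Since $\al,\be\ge1$ we have $\w_{k+2}\ge\w_{k+1}+\w_k>\w_{k+1}$, so every walk is strictly increasing from its second term onward and reaches any value at most once after the first term. It follows that $\s(n)=\max\{m\ge2:P_m(n)\neq\emptyset\}$, that the $\n$-good pairs are precisely the elements of $P_{\s(n)}(n)$, and (by truncating a walk from the left) that $\{m\ge2:P_m(n)\neq\emptyset\}$ is an interval $\{2,3,\dots,\s(n)\}$. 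I will also use, without further comment, the following elementary facts, each proved by a short induction or direct manipulation (this is where $\gcd(\al,\be)=1$ enters, via $\g_k\equiv\al^{k-1}\pmod\be$): $\gcd(\g_k,\g_{k-1})=\gcd(\g_k,\be)=1$; the Binet-type formula $\g_k=(\Gam^k-\Lam^k)/(\Gam-\Lam)$, with $\Gam+\Lam=\al$, $\Gam\Lam=-\be$, and $\g_{k+1}-\Gam\g_k=\Lam^k$; and the Cassini-type identity $\g_t^2-\g_{t+1}\g_{t-1}=(-\be)^{t-1}$.

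Fix $t\ge2$. Because $\gcd(\g_t,\be\g_{t-1})=1$, the integer solutions $(a_1,a_2)$ of $\g_t a_2+\be\g_{t-1}a_1=n$ form a single line, on which $a_1$ runs over one residue class mod $\g_t$; let $b\in\{1,\dots,\g_t\}$ be the least positive value of $a_1$ occurring and $a:=(n-\be\g_{t-1}b)/\g_t\in\Z$ the corresponding $a_2$. Since $b$ is the least admissible $a_1$, the value $a$ is the largest $a_2$ occurring, so $P_{t+1}(n)\neq\emptyset$ if and only if $a\ge1$, in which case $n=a\g_t+\be b\g_{t-1}$, $1\le b\le\g_t$, and
\[
  P_{t+1}(n)=\{(b+k\g_t,\ a-k\be\g_{t-1}):k\ge0,\ a-k\be\g_{t-1}\ge1\}.
\]

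The main step is to pin down the correct $t$, and for this I claim that, for the $a,b$ just produced, $P_{t+2}(n)=\emptyset$ is equivalent to the third bulleted condition. If $(a_1',a_2')\in P_{t+2}(n)$, i.e.\ $\g_{t+1}a_2'+\be\g_t a_1'=n$, then substituting $n=a\g_t+\be b\g_{t-1}$ and $\g_{t+1}=\al\g_t+\be\g_{t-1}$ and using $\gcd(\g_t,\be\g_{t-1})=1$ forces $a_2'=b+\ell\g_t$; then $1\le b\le\g_t$ forces $\ell\ge0$, and one computes $\be a_1'=a-\al b-\ell\g_{t+1}$. Hence $P_{t+2}(n)\neq\emptyset$ iff $a-\al b-\ell\g_{t+1}$ is a positive multiple of $\be$ for some $\ell\ge0$, which is exactly the negation of the third condition. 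This also yields $a\le(\be-1)\g_{t+1}+\al b$: since $\gcd(\g_{t+1},\be)=1$, the numbers $a-\al b-\ell\g_{t+1}$ for $\ell=0,\dots,\be-1$ realize every residue mod $\be$, so if $a-\al b$ exceeded $(\be-1)\g_{t+1}$, one of them would be a positive multiple of $\be$, contradicting the third condition.

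Assembling: given $n$ with $s=\s(n)>2$, set $t:=s-1\ge2$; by the interval property $P_{t+1}(n)\neq\emptyset$ while $P_{t+2}(n)=\emptyset$, so the previous two paragraphs produce positive integers $a,b$ satisfying the three listed conditions, with $(b,a)\in P_s(n)$ hence $\n$-good, with $s=t+1$, and with the $\n$-good pairs (being exactly $P_{t+1}(n)$) the displayed family. For uniqueness, any triple $(a,b,t)$ meeting the three conditions has $P_{t+1}(n)\neq\emptyset$ (witnessed by $(b,a)$) and $P_{t+2}(n)=\emptyset$, so the interval property forces $t=\s(n)-1$; then $b\in\{1,\dots,\g_t\}$ together with the congruence $n\equiv\be\g_{t-1}b\pmod{\g_t}$ determines $b$, and $a=(n-\be\g_{t-1}b)/\g_t$. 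The last two bullets are computations. From $\w_{s+1}(b,a)=\g_{t+1}a+\be\g_t b$, the identity $\g_{k+1}-\Gam\g_k=\Lam^k$, and $\be=-\Gam\Lam$, we get $\w_{s+1}(b,a)-\Gam n=a\Lam^t+\be b\Lam^{t-1}=\Lam^t(a-\Gam b)$, giving the stated equality; the bound $2\be^{t+1}$ then follows by estimating $|\Gam b-a|$ via $1\le b\le\g_t$ and $a\le(\be-1)\g_{t+1}+\al b$, plugging in the Binet formula, and using $|\Lam|\Gam=\be$, $|\Lam|^2<\be$, and $0<|\Lam|<\Gam$. Finally, for $(b',a')=(b+k\g_t,\,a-k\be\g_{t-1})$ we compute $\w_{s+1}(b',a')-\w_{s+1}(b,a)=\g_{t+1}(a'-a)+\be\g_t(b'-b)=k\be(\g_t^2-\g_{t+1}\g_{t-1})$, which by the Cassini-type identity equals $k\be(-\be)^{t-1}$. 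I expect the main obstacle to be the analysis of $P_{t+2}(n)$ in the third paragraph: carrying the congruence and positivity bookkeeping through there is precisely the extra difficulty that appears once $\be>1$, whereas the monotonicity and gcd preliminaries and the closed-form computations of the last paragraph are routine.
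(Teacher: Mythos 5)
Your proof is correct and follows essentially the same route as the paper (closed form for $w_m$, Cassini and Binet identities, Diophantine analysis of positive solutions to $n=a g_t+\be b g_{t-1}$), but with cleaner packaging: you organize everything around the solution sets $P_m(n)$ and their interval structure, which neatly subsumes Corollary~\ref{C-Form} and the two-sided argument in the paper's proof, and you derive $a\le(\be-1)g_{t+1}+\al b$ by a direct pigeonhole on residues mod $\be$ over $\ell=0,\ldots,\be-1$ rather than by invoking the Frobenius coin problem as in Lemma~\ref{L-abBound}. One substantive remark on the final bullet: your computation gives $w_{s+1}(b',a')-w_{s+1}(b,a)=k\be(-\be)^{t-1}=-k(-\be)^{t}$, which differs in sign from the $k(-\be)^t$ displayed in the theorem. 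Your value is the correct one, and the theorem statement has a sign typo; this is confirmed both by Lemma~\ref{L-Facts}(c) (stated there with the opposite convention $b'=b-kg_{s-1}$, giving $k(-\be)^{s-1}$, hence $-k(-\be)^t$ after sending $k\mapsto-k$) and by the $\be=1$ case in Corollary~\ref{C-alMain}, whose proof uses $(-1)^s=(-1)^{t+1}$ rather than $(-1)^t$.
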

	In Lemma~\ref{L-Chicken} we show that $\s(n)>2$ provided $n>\al\be$, so for any fixed $\al,\be$ this theorem applies to all but finitely many $n$.
	
	For the rest of the paper we omit writing $\al,\be$ whenever these are clear from context.  For example, we write $g_k$ instead of $\g_k$, say that a pair is $n$-good instead of $(n,\al,\be)$-good, and so on.  We also will assume $\al,\be$ are positive relatively prime integers unless stated otherwise.  As usual, let $\floor{x}$ denote the floor of $x$ and let $\ceil{x}$ denote the ceiling of $x$.

	One can prove a number of results concerning slow walks by utilizing Theorem~\ref{T-GMain}. For example, let $p(n)=p^{\al,\be}(n)$ denote the number of $n$-good pairs.
	
	\begin{thm}\label{T-Pairs}
		Let $\al,\be$ be relatively prime positive integers.
		\begin{itemize}
			\item If $s(n)>2$, then $p(n)\le \al^2+2\be-1$.   Moreover, there exists an $n$ achieving this bound.
			\item There exist infinitely many $n$ with \[p(n)=\ceil{\gam^2}-1=\al^2+\be+\ceil{\al\be\gam^{-1}}-1,\] and only finitely many $n$ with \[p(n)\ge\ceil{\gam^2}=\al^2+\be+\ceil{\al\be\gam^{-1}}.\]
			\item There exists infinitely many $n$ with $p(n)=\al^2+2\be-1$ if and only if $\al\ge \be$.
		\end{itemize}
	\end{thm}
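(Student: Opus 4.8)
The plan is to route all three parts through the characterization in Theorem~\ref{T-GMain} and then estimate the ratios $g_t/g_{t-1}$ sharply. Fix $n$ with $s(n)>2$ and write $t=t(n)$, $a=a(n)$, $b=b(n)$. By the last bullet of Theorem~\ref{T-GMain} the $n$-good pairs are exactly $(b+kg_t,\,a-k\be g_{t-1})$ for $k\ge 0$; as $b+kg_t\ge 1$ automatically, $p(n)$ is the number of $k\ge 0$ with $a-k\be g_{t-1}\ge 1$, i.e.\ $p(n)=\floor{\f{a-1}{\be g_{t-1}}}+1$. Inserting $b\le g_t$, $a\le(\be-1)g_{t+1}+\al b$, and the recursion identity $(\be-1)g_{t+1}+\al g_t=\al\be g_t+\be(\be-1)g_{t-1}$ gives the master bound
\[
p(n)\le\floor{\ \al\,\f{g_t}{g_{t-1}}+(\be-1)-\f{1}{\be g_{t-1}}\ }+1 .
\]
Set $\rho_t:=g_t/g_{t-1}$. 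Then $\rho_t=\al+\be/\rho_{t-1}$ for $t\ge3$ with $\rho_2=\al$, so $\rho_t\to\gam$, with $\rho_t<\gam$ for even $t$, $\rho_t>\gam$ for odd $t$, and $\max_{t\ge2}\rho_t=\rho_3=(\al^2+\be)/\al$. Putting $\rho_t\le(\al^2+\be)/\al$ into the master bound yields $p(n)\le\al^2+2\be-1$.

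For the ``moreover'' and for the two ``infinitely many'' claims I would construct $n$ directly: take $b=g_t$ and $a=(\be-1)g_{t+1}+\al g_t$, the largest value allowed, so that the master bound is attained. The first two conditions of Theorem~\ref{T-GMain} then hold by construction, and the third reduces to the fact $\gcd(g_{t+1},\be)=1$, which prevents any shift $(\be-1-\ell)g_{t+1}$, $\ell\ge0$, from being a positive multiple of $\be$; so there is an $n$ with these parameters (this is the one spot where one must confirm that an admissible triple is genuinely realized). With $t=3$ one gets $\rho_3=(\al^2+\be)/\al$ and $p(n)=\al^2+2\be-1$, proving the ``moreover''. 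Letting $t$ run over an infinite family instead, and using $\rho_t\to\gam$ with $\al\gam+\be=\gam^2$, the master quantity tends to $\gam^2-1$ and, for $t$ on the side of $\gam$ where it lands in $[\ceil{\gam^2}-2,\ceil{\gam^2}-1)$, one gets $p(n)=\ceil{\gam^2}-1$; the same scheme supplies the ``infinitely many'' half of part~(3) under the appropriate relation between $\al$ and $\be$.

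The finiteness halves of parts~(2) and~(3) go the other way. If $p(n)\ge\ceil{\gam^2}$ (resp.\ $p(n)=\al^2+2\be-1$), then $\al\rho_t+(\be-1)-\f{1}{\be g_{t-1}}\ge\ceil{\gam^2}-1$ (resp.\ $\ge\al^2+2\be-2$); since $\rho_t\to\gam$, $\al\gam+\be-1=\gam^2-1$, and $\gam^2-1<\ceil{\gam^2}-1$ unless $\gam^2\in\Z$, this pins $t$ into a bounded range (the integral case being handled via the decay estimates below). As each $t$ admits only finitely many triples obeying the three conditions, and distinct $n$ give distinct triples, only finitely many $n$ qualify. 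For part~(3) the analogous question — can $p(n)=\al^2+2\be-1$ with $t$ arbitrarily large? — reduces, since $\rho_t\to\gam$, to whether $\gam^2\ge\al^2+2\be-1$, which by $\gam^2=\al^2+\be+\al\be\gam^{-1}$ is an explicit inequality in $\al,\be$; that dichotomy is exactly the stated ``if and only if''.

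The main obstacle is the asymptotic accounting just used: passing from ``$\rho_t\to\gam$'' to these sharp threshold statements needs control of $\rho_t-\gam$ and $\f{1}{\be g_{t-1}}$ up to constants — one has $|\rho_t-\gam|\asymp(\be/\gam^2)^{t}$ while $g_{t-1}^{-1}\asymp\gam^{-t}$, so for odd $t$ whether $\al(\rho_t-\gam)-\f{1}{\be g_{t-1}}$ is positive (hence whether the master bound crosses an integer) depends on comparing $\be/\gam$ with $1$. The argument therefore splits on the parity of $t$ and on whether $\al^2+4\be$ is a perfect square, the square case ($\gam\in\Z$) being the delicate one; a second, smaller point is verifying that every admissible triple used in the constructions really does satisfy the arithmetic third condition of Theorem~\ref{T-GMain}.
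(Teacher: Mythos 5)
Your route is essentially the paper's route in different clothing. Your master bound $p(n)=\floor{(a-1)/(\be g_{t-1})}+1\le\floor{\al\rho_t+\be-1-\tfrac{1}{\be g_{t-1}}}+1$ is a rewriting of the paper's expansion $a_t=(\al^2+\be-1)\be g_{t-1}+\al\be^2 g_{t-2}$: since $\rho_t=\al+\be g_{t-2}/g_{t-1}$, the argument of your floor equals $\al^2+\be-1+\al\be g_{t-2}/g_{t-1}$, and the floor of this minus the tiny term is $\al^2+\be-1+k_t$ where $k_t$ is the paper's ``largest integer with $\al\be g_{t-2}>k_tg_{t-1}$''; your extremal pair $(b,a)=(g_t,(\be-1)g_{t+1}+\al g_t)$ is exactly the paper's Lemma~\ref{L-Ext}, and the $t=3$ computation of the ``moreover'' matches. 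For the first bullet and the ``moreover'' your argument is complete and correct, and coincides with the paper's.

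For the finiteness halves, the worry you flag as ``delicate'' (comparing $\rho_t-\gam$ against $1/(\be g_{t-1})$ when $\gam\in\Z$) is not a loose end that more decay estimates would tie off; it is fatal, and the paper's own proof makes the same error. Using $\rho_t-\gam=\lam^{t-1}/g_{t-1}$ (from $g_t-\gam g_{t-1}=\lam^{t-1}$), the threshold inequality becomes $\al\be\lam^{t-1}\ge 1$, which holds for \emph{every} odd $t$ whenever $\gam\in\Z$. Equivalently, the paper's claim that for large $t$, $\gam^{t-2}(\al\be-\gam k)-\lam^{t-2}(\al\be-\lam k)>0$ ``if and only if $\al\be>k\gam$'' fails at $k=\al\be/\gam$, where the $\gam^{t-2}$ term vanishes and the sign is governed by $-\lam^{t-2}$, positive for odd $t$; thus $k_t$ oscillates between $\al\be\gam^{-1}-1$ and $\al\be\gam^{-1}$ rather than stabilizing. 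Concretely, take $\al=1,\be=2$ (so $\gam=2$, $\ceil{\gam^2}=4=\al^2+2\be-1$): the paper's own $n_t=2g_tg_{t+1}$ with $t$ odd satisfy $p(n_t)=4$ for all odd $t\ge 3$ (one checks $p(30)=p(462)=4$), giving infinitely many $n$ with $p(n)\ge\ceil{\gam^2}$, and infinitely many with $p(n)=\al^2+2\be-1$ even though $\al<\be$. Both the second and the third bullets are false as stated. So the ``decay estimates below'' that your outline defers to cannot exist; the statements need amendment when $\al^2+4\be$ is a perfect square (e.g.\ replace $\ceil{\gam^2}$ by $\floor{\gam^2}+1$ in bullet two and $\al\ge\be$ by $\be\le\al+1$ in bullet three), and a correct proof must treat the sign of $\lam^{t-1}$ explicitly rather than absorbing it into an ``$o(\gam^t)$'' term.
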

	One can prove a density result for the number of $n$ with $p(n)>p$.  To this end, for $p$ an integer, let $S_p=\{m:p(m)>p\}$.  We emphasize that $S_p$ is the set consisting of $n$ with \textit{strictly} more than $p$ good pairs.  For $c$ real and $r$ an integer, define $n_{c,r}:=\floor{\f{c\be}{(\gam-\lam)^2} \gam^{2r+1}}$.
	
	\begin{thm}\label{T-Den}
		Let $\al,\be$ denote relatively prime positive integers.  Given an integer $p$, let $d$ denote the smallest integer such that $ \del:=\be\gam^{-1}p-\gam d\le \al$.	If $\be\le p\le \ceil{\gam^2}-2$ and $1\le c\le (p-\be+1)\gam/\al$, then $n_{c,r}^{-1}|S_p\cap [n_{c,r}]|=$
		{\footnotesize \begin{align*}c^{-1}\l(\f{(2\be-2d-1)\gam(\al-2\del+\al^{-1}\del^2)}{2\be^2(\gam^2-1)}+\f{\gam^2}{\gam^2-1}\sum_{q=d+1}^{\be-1}\f{\be-q}{\be^2}\r)+O(\gam^{-r}+(\be\gam^{-2})^r).\end{align*}}
		\iffalse {\footnotesize \begin{align*}\rec{c\be^2(\gam^2-1)}\l(\half(2\be-2d-1)\gam(\al-2\del+\al^{-1}\del^2)+\gam^2\sum_{q=d+1}^{\be-1}\f{\be-q}{\be^2}\r)+O(\gam^{-r}+(\be\gam^{-2})^r)\end{align*}}\fi
	\end{thm}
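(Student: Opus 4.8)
The plan is to turn the problem into a two-dimensional lattice-point count via Theorem~\ref{T-GMain}, then evaluate that count by a Riemann-sum/geometric-series argument. First discard the $O(1)$ values of $n$ with $s(n)\le 2$; for the rest, write $(a,b,t)=(a(n),b(n),t(n))$, so that by the last two bullets of Theorem~\ref{T-GMain} the good pairs are $(b+kg_t,\,a-k\be g_{t-1})$ for $k\ge 0$, whence $p(n)=\floor{(a-1)/(\be g_{t-1})}+1$ and $n\in S_p\iff a>\be p g_{t-1}$. Since $n\mapsto(a,b,t)$ is a bijection onto the triples satisfying the three bullets and $n=ag_t+\be b g_{t-1}$,
\[|S_p\cap[N]|=O(1)+\sum_{t\ge 2}\#\Big\{(a,b):1\le b\le g_t,\ \be p g_{t-1}<a\le(\be-1)g_{t+1}+\al b,\ \mathrm{ND}(a,b,t),\ ag_t+\be b g_{t-1}\le N\Big\},\]
where $\mathrm{ND}$ is the non-divisibility condition of Theorem~\ref{T-GMain}.

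Two observations drive the estimate. First, putting $u=a-\al b$, the condition $\mathrm{ND}$ holds automatically when $u\le 0$, and when $u>0$ it holds iff $u\le\ell_0 g_{t+1}$ with $\ell_0\in\{0,\dots,\be-1\}$ determined by $u\equiv\ell_0 g_{t+1}\pmod\be$ (here one uses $\gcd(g_{t+1},\be)=1$). Hence for fixed $t,b$, as $a$ runs over an interval the residue $u\bmod\be$ equidistributes, so the number of admissible $a$ in a window equals, up to $O(1)$, $\tfrac1\be\sum_{q=0}^{\be-1}(\text{length of the sub-window with }0<u\le qg_{t+1})$ plus the length of the sub-window with $u\le 0$. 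Second, using $g_{t\pm1}/g_t\to\gam^{\pm1}$ one checks that $\{n:t(n)=t,\ a(n)>\be pg_{t-1}\}$ occupies $n\in[\approx\be pg_{t-1}g_t,\ \be g_{t+1}g_t]$, that consecutive such blocks are separated by a genuine gap, and that the hypotheses $\be\le p\le\ceil{\gam^2}-2$ and $1\le c\le(p-\be+1)\gam/\al$ place $n_{c,r}$ at or above the top of the $t=r$ block but below the bottom of the $t=r+1$ block. Therefore the constraint $ag_t+\be bg_{t-1}\le N$ with $N=n_{c,r}$ is vacuous for $t\le r$ and unsatisfiable for $t>r$, giving $|S_p\cap[n_{c,r}]|=\sum_{t=2}^{r}B_t+O(\gam^r+\be^r)$, where $B_t$ is the unconstrained count of admissible $(a,b)$ with $a>\be pg_{t-1}$ and the error absorbs the edge effects near $t=r$ and the floor in $n_{c,r}$.

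To evaluate, sum the per-$b$ counts over $1\le b\le g_t$ as a Riemann sum: rescaling $x=a/g_t,\ y=b/g_t$ and using $\be g_{t-1}/g_t\to-\lam,\ g_{t+1}/g_t\to\gam$ gives $B_t=g_t^2 J+O(g_t)+O(g_t^2(\be\gam^{-2})^t)$, where with $\mu:=\be\gam^{-1}p$,
\[J=\int_0^1\Big([\al y-\mu]_+ +\tfrac1\be\sum_{q=1}^{\be-1}\big[\min(q\gam,\,q\gam+\al y-\mu)\big]_+\Big)\,dy.\]
Using $g_t^2=\gam^{2t}/(\gam-\lam)^2+O(\be^t)$ and $\sum_{t\le r}\gam^{2t}=\gam^{2r+2}/(\gam^2-1)+O(1)$ one gets $\sum_{t=2}^rB_t=\tfrac{J\gam^{2r+2}}{(\gam-\lam)^2(\gam^2-1)}+O(\gam^r+\be^r)$, and dividing by $n_{c,r}=\tfrac{c\be\gam^{2r+1}}{(\gam-\lam)^2}+O(1)$ gives $n_{c,r}^{-1}|S_p\cap[n_{c,r}]|=\tfrac{J\gam}{c\be(\gam^2-1)}+O(\gam^{-r}+(\be\gam^{-2})^r)$. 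Finally $J$ is elementary: both the $[\al y-\mu]_+$-term and the $q$-strips $0<u\le q\gam$ (in rescaled units) are cut off from below at $u=\mu$, so only the strips with $q\ge d$ — respectively the $[\al y-\mu]_+$-term when $d=0$ — contribute, and evaluating the resulting piecewise-linear integrals, splitting on the position of $\mu$ relative to $\al$ and to the multiples of $\gam$ (i.e.\ on $d$ and on $\del=\mu-\gam d$), reduces $J$ to a closed form built from $d$, $\del$, and $\sum_{q=d+1}^{\be-1}(\be-q)$, which rearranges to the claimed expression.

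The main obstacle is handling $\mathrm{ND}$: one must show it contributes an explicit piecewise-constant density (this needs the equidistribution of $a-\al b\bmod\be$ together with $\gcd(g_{t+1},\be)=1$) and then push this step function through the Riemann-sum and geometric-sum steps while keeping the error bounds. A secondary difficulty is the bookkeeping — verifying that the hypotheses on $p$ and $c$ really do put $n_{c,r}$ in the inter-block gap, and carefully managing the several cases ($d=0$ vs.\ $d\ge1$, and the sign of $\del$) in the evaluation of $J$.
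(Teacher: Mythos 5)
Your approach is essentially the same as the paper's: reduce via Theorem~\ref{T-GMain} to a lattice-point count over triples $(a,b,t)$, handle the non-divisibility condition by observing that in the strip $(q-1)g_{t+1}<a-\alpha b\le qg_{t+1}$ exactly a $\frac{\beta-q}{\beta}$-fraction of residues of $a$ mod $\beta$ is admissible (this is precisely what your $\ell_0$-characterization gives, using $\gcd(g_{t+1},\beta)=1$), and sum geometrically over $t$. Your integral $J$ is just a repackaging of the paper's per-$(q,t)$ quantities $|T(n_r,q,t)|$ and their asymptotic versions $|U(q,t)|$; the paper's separate treatment of $q<d$, $q=d$, $q=d+1$, $q>d+1$ is exactly your deferred ``evaluating the resulting piecewise-linear integrals, splitting on $d$ and $\delta$.'' The one step that needs repair is the block-gap check. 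You assert that the $t$-block lies in $[\approx\beta p g_{t-1}g_t,\ \beta g_{t+1}g_t]$ and that $c\le(p-\beta+1)\gamma/\alpha$ places $n_{c,r}$ below the bottom of the $t=r+1$ block. But the naive bound $n>\beta p g_{t-1}g_t$ (from $a>\beta p g_{t-1},\,b\ge1$) does not in general suffice: when $p\ge(\beta-1)\gamma^2/\beta$ one has $(p-\beta+1)\gamma/\alpha>p$, so the allowed $c$ can exceed $p$ and $n_{c,r}$ can land above $\beta p g_{r}g_{r+1}$. What you actually need is the sharper bound $n>\frac{\beta(p-\beta+1)}{\alpha}g_{t-1}g_{t+1}$, obtained by also using $\alpha b\ge a-(\beta-1)g_{t+1}$ to bound $b$ from below; its extra factor of $\gamma$ is exactly what makes the hypothesis $c\le(p-\beta+1)\gamma/\alpha$ close the argument. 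This is the paper's Lemma~\ref{L-Pairs}. With that substitution your reduction to $\sum_{t\le r}B_t+O(\gamma^r+\beta^r)$ and the evaluation of $J$ are sound.
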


	A few comments are in order.  The first is that for $p\ge \ceil{\gam^2}-1$ the density of $S_p$ is 0 by Theorem~\ref{T-Pairs}, so it makes sense to only consider $p\le \ceil{\gam^2}-2$.  Because $\be\gam^{-2}<1$ for all $\al$, this error term is $o(1)$, though it is the case that $\be\gam^{-2}$ can be made arbitrarily close to 1.  We note that this result implies in particular that $\lim_{n\to \infty} n^{-1}|S_p\cap [n]|$ does not exist, and instead this exact value oscillates with $n$.  We have included concrete examples and data illustrating these density results in the Appendix.
	
	The statement of Theorem~\ref{T-Den} becomes significantly cleaner if $d=\be-1$, which occurs when $p$ is sufficiently large.  Specifically, we will show the following.
	
	\begin{cor}\label{C-pLarge}
		Let $\al,\be$ be relatively prime integers.  For $p$ an integer, let $\del=\be\gam^{-1}p-\gam (\be-1)$. If $ \max\{\be,\floor{(1-\be^{-1})\gam^2}\}\le p\le\ceil{\gam^2}-2$ and $1\le c\le (p-\be+1)\gam/\al$, then
		\begin{align*}
		n_{c,r}^{-1}|S_p\cap [n_{c,r}]|=\f{\gam(\al-2\del+\al^{-1}\del^2)}{2c\be^2(\gam^2-1)}+O(\gam^{-r}+(\be\gam^{-2})^r).
		\end{align*}
	\end{cor}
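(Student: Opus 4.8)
The plan is to deduce Corollary~\ref{C-pLarge} from Theorem~\ref{T-Den} by checking that, under the stronger lower bound $p\ge\floor{(1-\be^{-1})\gam^2}$, the integer $d$ appearing in Theorem~\ref{T-Den} is forced to equal $\be-1$; once this is established the claimed formula is a purely mechanical simplification of the general one, and the remaining hypotheses of Theorem~\ref{T-Den} carry over directly.

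First I would rewrite the definition of $d$ explicitly. Since $d$ is the least integer with $\be\gam^{-1}p-\gam d\le\al$, solving for $d$ gives $d=\ceil{\be\gam^{-2}p-\al\gam^{-1}}$, so proving $d=\be-1$ amounts to establishing the two inequalities
\[
\be-2<\be\gam^{-2}p-\al\gam^{-1}\le\be-1.
\]
The identity doing all the work is $\gam^2=\al\gam+\be$, valid because $\gam$ is the positive root of $x^2-\al x-\be$; equivalently $\gam^2-\al\gam=\be$. Using it, the right inequality rearranges to $p\le\gam^2-1$, which holds (with room to spare) because $p\le\ceil{\gam^2}-2<\gam^2-1$, and the left inequality rearranges to $p>(1-\be^{-1})\gam^2-1$, which holds because $p\ge\floor{(1-\be^{-1})\gam^2}>(1-\be^{-1})\gam^2-1$. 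Hence $d=\be-1$, and then $\del=\be\gam^{-1}p-\gam(\be-1)$, which matches the definition of $\del$ in the statement of the corollary; the same computation records that this $\del$ satisfies $\del\le\al$, so the hypotheses of Theorem~\ref{T-Den} are consistent with $d=\be-1$.

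It then remains to check the other hypotheses of Theorem~\ref{T-Den} and substitute. The bound $\be\le p\le\ceil{\gam^2}-2$ is immediate from $\max\{\be,\floor{(1-\be^{-1})\gam^2}\}\le p\le\ceil{\gam^2}-2$, and $1\le c\le(p-\be+1)\gam/\al$ is assumed verbatim. Putting $d=\be-1$ into the formula of Theorem~\ref{T-Den}, the factor $2\be-2d-1$ collapses to $1$ and the sum $\sum_{q=d+1}^{\be-1}\f{\be-q}{\be^2}$ is over an empty range and hence vanishes, leaving exactly $\f{\gam(\al-2\del+\al^{-1}\del^2)}{2c\be^2(\gam^2-1)}+O(\gam^{-r}+(\be\gam^{-2})^r)$, as claimed. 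I do not anticipate a genuine obstacle here; the only point requiring care is handling the ceiling in $d=\ceil{\be\gam^{-2}p-\al\gam^{-1}}$ properly, i.e.\ keeping the inequalities $p\le\ceil{\gam^2}-2<\gam^2-1$ and $\floor{x}>x-1$ strict in the right places so that $d$ is pinned down to be neither $\be$ nor $\be-2$.
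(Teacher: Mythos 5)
Your proposal is correct and takes essentially the same approach as the paper: show $d=\be-1$ and then substitute into the formula of Theorem~\ref{T-Den}. The only minor difference is that you verify both bounding inequalities for $d$ directly via the ceiling formula $d=\ceil{\be\gam^{-2}p-\al\gam^{-1}}$, whereas the paper inherits the upper bound $d\le\be-1$ from Claim~\ref{Cl-d} (established inside the proof of Theorem~\ref{T-Den}) and only checks the lower bound $d>\be-2$.
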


	When $\be=1$, the statement of Corollary~\ref{C-pLarge} becomes even simpler.
	\begin{cor}\label{C-be=1}
		Let $\al$ and $1\le p\le \al^2$ be integers and $1\le c\le p\gam/\al$ real. Then
		\begin{align*}
		n_{c,r}^{-1}|S_p\cap [n_{c,r}]|=\half c^{-1}(1-(\al\gam)^{-1}p)^2+O(\gam^{-r}).
		\end{align*}
		In particular, for $p=\al^2$ and $1\le c\le \al\gam$ we have
		\begin{align*}
		n_{c,r}^{-1}|S_p\cap [n_{c,r}]|=\half c^{-1} \gam^{-4}+O(\gam^{-r}).
		\end{align*}
	\end{cor}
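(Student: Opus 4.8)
The plan is to derive Corollary~\ref{C-be=1} directly from Corollary~\ref{C-pLarge} by specializing to $\be=1$; the entire task is then to check that the hypotheses of Corollary~\ref{C-pLarge} collapse to those stated here and to simplify the main term and the error term. Throughout I will use that $\gam$ and $\lam$ are the roots of $x^2=\al x+\be$, so that $\gam+\lam=\al$, $\gam\lam=-\be$, and $\gam^2=\al\gam+\be$; with $\be=1$ these read $\gam+\lam=\al$, $\gam\lam=-1$, and $\gam^2-1=\al\gam$.

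First I would verify the ranges. Since $\gam\lam=-1$ we have $\gam-\al=-\lam=\gam^{-1}$, and as $\gam>\al$ this gives $0<\gam-\al<\al^{-1}$, hence $\al^2<\al\gam<\al^2+1$. Because $\gam^2=\al\gam+1$, this yields $\al^2+1<\gam^2<\al^2+2$, so $\ceil{\gam^2}=\al^2+2$ and therefore $\ceil{\gam^2}-2=\al^2$. Also $\floor{(1-\be^{-1})\gam^2}=\floor{0}=0$ when $\be=1$, so the lower bound $\max\{\be,\floor{(1-\be^{-1})\gam^2}\}$ in Corollary~\ref{C-pLarge} equals $\max\{1,0\}=1$. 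Thus the hypothesis $\max\{\be,\floor{(1-\be^{-1})\gam^2}\}\le p\le\ceil{\gam^2}-2$ becomes exactly $1\le p\le\al^2$. The constraint $1\le c\le(p-\be+1)\gam/\al$ becomes $1\le c\le p\gam/\al$, as claimed. Finally, with $\be=1$ the quantity $\del=\be\gam^{-1}p-\gam(\be-1)$ equals $p/\gam$.

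Next I would simplify. Using $\be^2=1$ and $\gam^2-1=\al\gam$, together with the factorization $\al-2\del+\al^{-1}\del^2=\al(1-\del/\al)^2$ and $\del/\al=p/(\al\gam)=(\al\gam)^{-1}p$, the main term of Corollary~\ref{C-pLarge} becomes
\[
\f{\gam\big(\al-2\del+\al^{-1}\del^2\big)}{2c\be^2(\gam^2-1)}
=\f{\gam\cdot\al\big(1-(\al\gam)^{-1}p\big)^2}{2c\,\al\gam}
=\half c^{-1}\big(1-(\al\gam)^{-1}p\big)^2 .
\]
The error term $O(\gam^{-r}+(\be\gam^{-2})^r)=O(\gam^{-r}+\gam^{-2r})$ is $O(\gam^{-r})$ since $\gam>1$. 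This is the first displayed formula. For the ``in particular'' part, take $p=\al^2$ (the top of the allowed range), so that $1\le c\le\al^2\gam/\al=\al\gam$; then $1-(\al\gam)^{-1}\al^2=1-\al/\gam=(\gam-\al)/\gam=\gam^{-1}/\gam=\gam^{-2}$, using $\gam-\al=\gam^{-1}$ again, and squaring gives $\gam^{-4}$.

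There is no real obstacle here: the argument is a reduction plus elementary algebra. The only point requiring a little care is confirming that the floor and ceiling appearing in the hypotheses of Corollary~\ref{C-pLarge} simplify exactly to the endpoints $1$ and $\al^2$, which is where the estimate $\al<\gam<\al+\al^{-1}$ (equivalently $\gam-\al=\gam^{-1}$) is used.
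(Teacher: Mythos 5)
Your proof is correct and takes essentially the same route as the paper: specialize Corollary~\ref{C-pLarge} to $\be=1$, check the hypotheses collapse to $1\le p\le\al^2$ and $1\le c\le p\gam/\al$, and simplify using $\gam^2-1=\al\gam$ and the factorization of $\al-2\del+\al^{-1}\del^2$ as a perfect square. The paper writes the factorization as $(\al^{1/2}-\al^{-1/2}\gam^{-1}p)^2$ where you write $\al(1-\del/\al)^2$, but these are identical; everything else matches.
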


	Given $n$, one might ask how slow the slowest $n$-slow walk is.  To this end, we define \[\ss(n)=\max_{\substack{\al,\be\ge 1,\\ \gcd(\al,\be)=1}} \s(n),\]
	as well as the set of  $\al,\be$ pairs achieving this value, \[ \S(n)=\{(\al,\be):\s(n)=\ss(n)\}.\]
	
	A priori, any pair $(\al,\be)$ could be an element of $\S(n)$ for some $n$.  However, it turns out that only a finite number of pairs have this property.
	\begin{thm}\label{T-Slowest}
		Let $R=\{(1,1),(2,1),(1,2),(1,3),(1,4)\}$.
		\begin{itemize}
			\item For all $n>1$, we have $\S(n)\sub R$.  
			\item For all $(\al,\be)\in R$, there exists an $n$ with $(\al,\be)\in \S(n)$.
			\item The set of $n$ with $\S(n)=\{(1,1)\}$ has density 1.
		\end{itemize}  
	\end{thm}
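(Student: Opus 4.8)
The plan is to reduce everything to the Fibonacci case $(1,1)$ and then combine a structural argument with a counting argument. Throughout I use Theorem~\ref{T-GMain} to rephrase ``$\s(n)=s$'' as ``$n$ lies in the level set'' $V^{\al,\be}_{s-1}:=\{a g_{s-1}+\be b g_{s-2}:1\le b\le g_{s-1},\ 1\le a\le(\be-1)g_s+\al b,\ \text{3rd bullet}\}$, noting $\min V^{\al,\be}_{s-1}=g_{s-1}+\be g_{s-2}$ and $\max V^{\al,\be}_{s-1}\le\be g_{s-1}g_s$ (with equality $f_{s-1}f_s$ when $\al=\be=1$, since there the conditions reduce to $1\le a\le b\le f_{s-1}$).

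\emph{Step 1 (the Fibonacci case is slowest).} First I would prove $\s(n)\le\fibs(n)$ for every coprime $(\al,\be)$ and every $n$. Since $g^{\al,\be}_k\ge f_k$ for all $k$, with equality throughout only for $(1,1)$, an $\w$-walk grows at least as fast as a $\fibw$-walk and so ought to generate $n$ no later; I would make this precise by reducing $(\al,\be)$ to $(1,1)$ along the chain of coprime pairs $(\al,\be)\to(1,\be)\to(1,\be-1)\to\cdots\to(1,1)$ and showing each step does not decrease $s(n)$ — e.g.\ that a level-$t$ representation of $n$ for $(1,\be)$ yields one of level $\ge t$ for $(1,\be-1)$. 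This monotonicity is the crux, and I expect essentially all of the difficulty to lie here: the level sets are genuinely non-monotone in $t$ (so e.g.\ $\fibs(6)=4$, not $5$), which rules out naive Frobenius-type bounds and forces one to track the representation of $n$ through the recursion. Granting it, $\ss(n)=\fibs(n)$, $\S(n)=\{(\al,\be):\s(n)=\fibs(n)\}$, and $(1,1)\in\S(n)$ always.

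\emph{Step 2 ($\S(n)\sub R$).} Observe that $R$ is exactly the set of coprime pairs with $\Gam\le\gam_{1,1}^2=\phi^2$ (for $\al=1$ this is $\be\le4$, for $\al=2$ it is $\be=1$, for $\al\ge3$ it is impossible), and that there is a gap: the least $\Gam$ among the remaining coprime pairs is $\gam_{1,5}=\half(1+\sqrt{21})>\phi^2$. Suppose $(\al,\be)\notin R$ and $\s(n)=\fibs(n)=s$. Then $n\in V^{\al,\be}_{s-1}$ and $n\in V^{1,1}_{s-1}$, so $g^{\al,\be}_{s-1}\le n\le f_{s-1}f_s$. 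Using $g^{\al,\be}_{s-1}=(\Gam^{s-1}-\Lam^{s-1})/(\Gam-\Lam)$ with $|\Lam|=\be/\Gam$, the inequality $g^{\al,\be}_{s-1}\le f_{s-1}f_s$ together with $\Gam\ge\gam_{1,5}>\phi^2$ forces $s\le7$ (indeed smaller for every pair other than $(1,5)$, since $g^{1,5}_6=96\le104=f_6f_7$ but $g^{1,5}_7=301>273=f_7f_8$), hence $n\le f_6f_7=104$. One then checks directly — finitely many $n$, and for each of these finitely many $(\al,\be)$ with $g^{\al,\be}_{s-1}\le n$ — that $\s(n)=\fibs(n)$ never occurs for $n>1$. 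With Step~1 this gives $\S(n)\sub R$ for all $n>1$.

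\emph{Step 3 (each pair in $R$ is attained) and Step 4 (density).} For $(1,1)$, attainment is Step~1; for each of $(2,1),(1,2),(1,3),(1,4)$ one exhibits a concrete $n$ at the bottom of a level set, $n=g^{\al,\be}_{s-1}+\be g^{\al,\be}_{s-2}$ for a suitable $s$, where the corresponding dip of $\fibs$ also gives $\fibs(n)=s$ (e.g.\ $(1,3)\in\S(40)$, since $\s(40)=\fibs(40)=6$) — a finite check for four pairs. For density, by Step~2 we have $\{n:\S(n)\ne\{(1,1)\}\}\sub\bigcup_{(\al,\be)\in R\sm\{(1,1)\}}\{n:\s(n)=\fibs(n)\}$, a union of four sets, so it suffices that each has density $0$. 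Fix such $(\al,\be)$: if $\s(n)=\fibs(n)=s$ then $n\in V^{\al,\be}_{s-1}\cap V^{1,1}_{s-1}$, and a lattice-point count gives $|V^{\al,\be}_{s-1}\cap[1,N]|=O(N^2/(g^{\al,\be}_{s-1})^2)$ and $|V^{1,1}_{s-1}\cap[1,N]|=O(N^2/f_{s-1}^2)$, while each is trivially $\le N$. As functions of $s$ these are ``bumps'' of height $\asymp N$ peaking near $s\approx\half\log_\Gam N$ and $s\approx\half\log_\phi N$, decaying geometrically off the peak; since $\Gam>\phi$ the two peaks are $\Theta(\log N)$ apart, so $\min(|V^{\al,\be}_{s-1}\cap[1,N]|,\,|V^{1,1}_{s-1}\cap[1,N]|)=O(N^{1-c})$ for an absolute $c>0$, uniformly in $s$. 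Summing over the $O(\log N)$ relevant $s$ gives $|\{n\le N:\s(n)=\fibs(n)\}|=o(N)$, hence $\{n:\S(n)=\{(1,1)\}\}$ has density $1$.

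The single hard point is the monotonicity claim in Step~1; Steps~2--4, given Theorem~\ref{T-GMain}, are a mixture of the spectral-gap estimate, a bounded computation, and the two-bumps counting argument.
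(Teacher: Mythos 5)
Your Step 1 is the fatal flaw. You claim that $\s(n)\le\fibs(n)$ for every coprime $(\al,\be)$ and every $n$, i.e.\ that $(1,1)\in\S(n)$ always. This is false, and the paper itself records counterexamples in Section~\ref{S-Con}: $\S(171)=\{(1,2)\}$, $\S(22619537)=\{(2,1)\}$, $\S(11228332)=\{(1,3)\}$, and a $34$-digit $n$ with $\S(n)=\{(1,4)\}$. In each of these $s^{1,1}(n)$ is \emph{strictly} smaller than $\s(n)$ for another pair. The intuition that slower-growing recurrences always reach $n$ no earlier breaks down precisely because the walk must hit $n$ exactly: a slowly-growing recurrence may be forced to ``overshoot'' $n$ for every admissible starting pair $(a_1,a_2)\ge 1$, while a faster one happens to land on $n$ late. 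So the proposed monotonicity along $(\al,\be)\to(1,\be)\to\cdots\to(1,1)$ does not hold, and there is no way to rescue this step.

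The error propagates. Step 2 as written uses $\ss(n)=\fibs(n)$ to conclude $n\le f_{s-1}f_s$, which you do not have; Step~4 uses the (false) containment $\{n:\S(n)\ne\{(1,1)\}\}\sub\bigcup\{n:\s(n)=\fibs(n)\}$. The paper sidesteps all of this by using only the one-sided, definitionally trivial inequality $\ss(n)\ge s^{1,1}(n)$ together with the two-sided bound of Proposition~\ref{P-Bound}: if $(\al,\be)\in\S(n)$ then $\log_\gam n+2\ge\s(n)=\ss(n)\ge s^{1,1}(n)\ge\tfrac12\log_\phi n+2$, forcing $\log_\phi\gam\le 2$, which (by monotonicity of $\gam_{\al,\be}$ and checking $\gam_{3,1},\gam_{2,2},\gam_{1,5}$) gives $\S(n)\sub R$. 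Note that $\log_\phi\gam\le2$ is exactly your $\gam\le\phi^2$, so your identification of $R$ as a spectral-gap condition is correct and matches the paper — it is only the route via the false monotonicity that is broken. If you replace $\ss(n)=\fibs(n)$ by $\ss(n)\ge\fibs(n)$ throughout, Step 2 becomes essentially the paper's argument. Similarly your ``two bumps'' counting in Step~4 is in the same spirit as the paper's Proposition~\ref{P-Slowest} (which bounds $|\I_T(\al,\be)\cap[n]|=O(n^{2-c})$ via a lattice-point count comparing levels of $g^{\al,\be}$ and the extremal pair), but you must first fix the set inclusion it rests on; the paper instead derives density $1$ directly from Corollary~\ref{C-Slowest}, which requires only that $(1,1)$ is the unique minimizer of $\gam_{\al,\be}$, not that it always lies in $\S(n)$.

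Finally, the explicit $n$ you suggest for Step 3 ($n=40$ for $(1,3)$) agrees with the paper, but the paper's witness for $(1,4)$ is $n=5307721328585529$; there is no reason to expect a small one, so ``a finite check'' here is not as innocuous as it sounds — you need the algorithm of Proposition~\ref{P-Alg} or an explicit search at the level $n=g^{\al,\be}_{t}+\be g^{\al,\be}_{t-1}$ as the paper notes.
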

	
	The rest of the paper is organized as follows.  In Section~\ref{S-alMain} we prove Theorem~\ref{T-GMain}.  In Section~\ref{S-App} we apply this result to prove Theorem~\ref{T-Pairs}, along with several other results about slow walks. In Section~\ref{S-Den} we develop a general theory for proving density results for $n$-slow walks, and we use this theory to prove Theorem~\ref{T-Den}.  In Section~\ref{S-SS} we prove results concerning slowest $n$-slow walks.  We end with some open problems in Section~\ref{S-Con}.
	
	Throughout this paper we use various identities and inequalities involving $\gam$ and $\lam$ to simplify our expressions.  Essentially all of these identities follow from the fact that these are the two solutions to the equation $x^2=\al x+\be$, and all of the inequalities are immediate from the definitions.  In particular, we will make frequent use of the following: 
	\begin{align*}
	\gam^2&=\al \gam+\be,\\ \gam+\lam&=\al,\\ 
	\gam\lam&=-\be,\\ 
	\lam&<0,\\
	\gam&>\al,\ \be^{1/2},\ |\lam|.\\
	\end{align*}
\section{The Characterization Theorem}\label{S-alMain}
Recall that we assume that $\al$ and $\be$ are relatively prime positive integers.  We adopt the convention that $g_0=0$ and $g_{-1}=\be^{-1}$.  Note that with this we have $g_{k+2}=\al g_{k+1}+\be g_k$ for all $k\ge -1$.

The sequence $g_k$ is very similar to the Fibonacci numbers $f_k$.  In particular, we have the following.

\begin{lem}\label{L-FibLike}
	~
	\begin{itemize}
		\item[(a)] For $k\ge 1$, $\gcd(g_k,\be)=1$.
		\item[(b)] For $k\ge 1$, $\gcd(g_{k+1},\be g_{k})=1$.
		\item[(c)] For $k\ge -1$,  $g_{k+1}^2-g_{k}g_{k+2}=(-\be)^k$.
		\item[(d)] For $k\ge -1$, we have $g_k=\f{\gam^k-\lam^k}{\gam-\lam}$.
	\end{itemize}
\end{lem}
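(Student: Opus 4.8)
The plan is to prove part (d) first, since the remaining parts follow either from it or from an elementary reduction of the recurrence modulo $\be$. For (d), I would observe that $\gam$ and $\lam$ are precisely the two roots of $x^2 = \al x + \be$, so the sequence $h_k := (\gam^k - \lam^k)/(\gam - \lam)$ satisfies $h_{k+2} = \al h_{k+1} + \be h_k$ for every $k$, in particular for all $k \ge -1$. It then suffices to check agreement with $g_k$ at two consecutive indices: $h_0 = 0 = g_0$ and $h_1 = 1 = g_1$, so $h_k = g_k$ for all $k \ge 0$ by running the recurrence forward, and running it back one step gives $h_{-1} = -1/(\gam\lam) = 1/\be = g_{-1}$, using $\gam\lam = -\be$.

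For (c), I would substitute the formula from (d) and compute
\begin{align*}
g_{k+1}^2 - g_k g_{k+2} &= \f{(\gam^{k+1} - \lam^{k+1})^2 - (\gam^k - \lam^k)(\gam^{k+2} - \lam^{k+2})}{(\gam - \lam)^2}\\
&= \f{(\gam\lam)^k(\gam - \lam)^2}{(\gam - \lam)^2} = (\gam\lam)^k = (-\be)^k,
\end{align*}
the middle equality being a one-line expansion of the numerator. Alternatively, (c) admits a short induction starting at $k = -1$, where $g_0^2 - g_{-1}g_1 = -\be^{-1} = (-\be)^{-1}$; the inductive step rewrites $g_{k+2}^2 - g_{k+1}g_{k+3} = g_{k+2}(g_{k+2} - \al g_{k+1}) - \be g_{k+1}^2 = -\be(g_{k+1}^2 - g_k g_{k+2})$ via the recurrence.

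For (a), reducing $g_k = \al g_{k-1} + \be g_{k-2}$ modulo $\be$ gives $g_k \equiv \al g_{k-1} \pmod \be$ for $k \ge 2$, hence $g_k \equiv \al^{k-1} g_1 = \al^{k-1} \pmod \be$ for all $k \ge 1$; since $\gcd(\al, \be) = 1$ this yields $\gcd(g_k, \be) = 1$. For (b), let $d = \gcd(g_{k+1}, \be g_k)$ with $k \ge 1$. Then $d \mid g_{k+1}$, so $\gcd(d, \be) = 1$ by part (a), and therefore $d \mid g_k$ as well. Consequently $d$ divides $g_{k+1}^2 - g_k g_{k+2} = (-\be)^k$ by part (c), and being coprime to $\be$ it must equal $1$. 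There is no real obstacle here; the only point requiring care is the nonstandard conventions $g_0 = 0$ and $g_{-1} = \be^{-1}$ — the latter not an integer, so for $k = -1$ the identity in (c) is merely an identity of rationals, while (a) and (b) are asserted only for $k \ge 1$, where every quantity in sight is a positive integer. One should also confirm at the outset that $g_{k+2} = \al g_{k+1} + \be g_k$ really does hold down to $k = -1$ with these conventions, which it does since $g_1 = \al \cdot 0 + \be \cdot \be^{-1} = 1$.
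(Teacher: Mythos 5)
Your proof is correct, and it takes a different path through the four claims than the paper does. The paper proves (a), (b), and (c) independently by induction on $k$ and then simply cites (d) as well known. You instead anchor everything on (d), proving it from scratch via the root characterization of $\gam,\lam$ and two base cases, then derive (c) by expanding the closed form (with the induction as an alternative), prove (a) by the same mod-$\be$ reduction the paper uses, and dispatch (b) with a short divisibility argument: any common divisor $d$ of $g_{k+1}$ and $\be g_k$ is coprime to $\be$ by (a), hence divides $g_k$, hence divides $g_{k+1}^2 - g_k g_{k+2} = (-\be)^k$, forcing $d=1$. This is genuinely different from the paper's (b), which first establishes $\gcd(g_{k+1},g_k)=1$ by a fresh induction using the recurrence and then combines with (a). Your route is a bit more ``top-down'' and arguably cleaner, since it reuses (c) and avoids a second inductive argument; the paper's route has the minor advantage of keeping (a), (b), (c) independent of (d), which it treats as a citation rather than something to prove. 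Your attention to the nonstandard conventions $g_0=0$, $g_{-1}=\be^{-1}$ and to verifying the recurrence holds at $k=-1$ is appropriate and matches the care the paper takes in setting them up.
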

\begin{proof}
	Observe that (a) and (b) hold for $k=1$, and that (c) holds for $k=-1,0$.  For (a), inductively assume that we have proven the statement up to $k>1$.  We then have
	\[
	\gcd(g_k,\be)=\gcd(\al g_{k-1}+\be g_{k-2},\be)=\gcd(\al g_{k-1},\be)=1,
	\]
	where we used that both $\al$ and $g_{k-1}$ are relatively prime to $\be$.
	
	For (b), inductively assume we have proven the statement up to $k>1$.  We then have
	\[
	\gcd(g_{k+1},g_{k})=\gcd(\al g_k+\be g_{k-1},g_k)=\gcd(\be g_{k-1},g_k)=1.
	\]
	This shows that $g_{k+1}$ is relatively prime to $g_k$.  We know that $g_{k+1}$ is also relatively prime to $\be$ by (a), so we conclude the result.
	
	For (c), inductively assume we have proven the statement up to $k>0$.  We then have
	\begin{align*}
	g_{k+1}^2-g_{k}g_{k+2}&=(\al g_k+\be g_{k-1})g_{k+1}-(\al g_{k+1}+\be g_k)g_{k}\\ 
	&=\be g_{k-1}g_{k+1}-\be g_k^2=-\be(-\be)^{k-1}=(-\be)^k.
	\end{align*}
	
	The formula in (d) is well known, see \cite{W2}, for example.  The statement can also be proven inductively, or by plugging the formula into the recurrence relation.
\end{proof}

We can use $g_k$ to write formulas for $w_k(b,a)$ and $w_{k+1}(b,a)$. 

\begin{lem}\label{L-Form}
	The following formulas hold for $a,b,k\ge 1$.
	\begin{itemize}
		\item[(a)] \[w_k(b,a)=ag_{k-1}+\be bg_{k-2}.\]
		\item[(b)] \[w_{k+1}(b,a)=\gam w_k(b,a)+\lam^{k-1}(a-\gam b).\]
	\end{itemize}
\end{lem}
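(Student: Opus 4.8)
The statement to prove is Lemma~\ref{L-Form}, giving closed forms for $w_k(b,a)$ and $w_{k+1}(b,a)$ in terms of the generalized Fibonacci sequence $g_k$ and the roots $\gam,\lam$.

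My plan is a straightforward induction for part (a), followed by a direct computation for part (b) using part (a) together with the Binet-type formula from Lemma~\ref{L-FibLike}(d).

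\medskip

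\emph{Proof of (a).} Fix $a,b\ge 1$ and induct on $k$. For $k=1$ we need $w_1(b,a)=ag_0+\be b g_{-1}$; since $g_0=0$ and $g_{-1}=\be^{-1}$ by our convention, the right-hand side is $0+\be b\be^{-1}=b=w_1(b,a)$. For $k=2$ we need $w_2(b,a)=ag_1+\be bg_0 = a\cdot 1+0=a=w_2(b,a)$. Now assume the formula holds for $k$ and $k+1$ with $k\ge 1$. Using the recurrence $w_{k+2}(b,a)=\al w_{k+1}(b,a)+\be w_k(b,a)$ and the recurrence $g_{j+2}=\al g_{j+1}+\be g_j$ (valid for all $j\ge -1$), we compute
\[
w_{k+2}(b,a)=\al(ag_k+\be bg_{k-1})+\be(ag_{k-1}+\be bg_{k-2})=a(\al g_k+\be g_{k-1})+\be b(\al g_{k-1}+\be g_{k-2})=ag_{k+1}+\be bg_k,
\]
which is the claimed formula for $k+2$. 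This completes the induction.

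\medskip

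\emph{Proof of (b).} Using part (a) twice,
\[
w_{k+1}(b,a)-\gam w_k(b,a)=\bigl(ag_k+\be bg_{k-1}\bigr)-\gam\bigl(ag_{k-1}+\be bg_{k-2}\bigr)=a(g_k-\gam g_{k-1})+\be b(g_{k-1}-\gam g_{k-2}).
\]
By Lemma~\ref{L-FibLike}(d), for any $j\ge 0$ we have $g_j-\gam g_{j-1}=\f{\gam^j-\lam^j}{\gam-\lam}-\gam\cdot\f{\gam^{j-1}-\lam^{j-1}}{\gam-\lam}=\f{\gam\lam^{j-1}-\lam^j}{\gam-\lam}\cdot(-1)\cdot(-1)$; more carefully, $\gam^j-\gam\gam^{j-1}=0$, so $g_j-\gam g_{j-1}=\f{-\lam^j+\gam\lam^{j-1}}{\gam-\lam}=\lam^{j-1}\cdot\f{\gam-\lam}{\gam-\lam}=\lam^{j-1}$. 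Applying this with $j=k$ and $j=k-1$ gives
\[
w_{k+1}(b,a)-\gam w_k(b,a)=a\lam^{k-1}+\be b\lam^{k-2}=\lam^{k-2}(a\lam+\be b)=\lam^{k-2}(a\lam-\gam\lam\, b)=\lam^{k-1}(a-\gam b),
\]
where we used $\be=-\gam\lam$ from Lemma~\ref{L-FibLike} (equivalently the relations listed in the introduction). Rearranging yields $w_{k+1}(b,a)=\gam w_k(b,a)+\lam^{k-1}(a-\gam b)$, as desired.

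\medskip

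Neither part presents a genuine obstacle: (a) is a routine two-step induction matching the two initial conditions of $w$ against the convention $g_0=0,\ g_{-1}=\be^{-1}$, and the only point requiring a little care is that part (b) needs the identity $g_j-\gam g_{j-1}=\lam^{j-1}$ valid down to $j=k-1\ge 0$, which is exactly why the extended convention $g_{-1}=\be^{-1}$ (and hence Lemma~\ref{L-FibLike}(d) for $k\ge -1$) was set up. If one prefers to avoid the Binet formula, the identity $g_j-\gam g_{j-1}=\lam^{j-1}$ can instead be proved by its own one-line induction using $g_{j+1}-\gam g_j = (\al-\gam)g_j+\be g_{j-1} = \lam g_j+\be g_{j-1} = \lam(g_j - \gam g_{j-1})$, since $\al-\gam=\lam$ and $\be=-\gam\lam$.
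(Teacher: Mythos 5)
Your proof is correct and follows essentially the same route as the paper: a two-step induction for (a) using the conventions $g_0=0,\ g_{-1}=\be^{-1}$, and for (b) a computation via the Binet formula from Lemma~\ref{L-FibLike}(d). The only cosmetic difference is that you factor out the helper identity $g_j-\gam g_{j-1}=\lam^{j-1}$ before applying it, whereas the paper substitutes the Binet formula directly into $(\gam-\lam)w_{k+1}(b,a)$ and splits the resulting expression in place; your extra observation that this helper identity also has a one-line inductive proof via $g_{j+1}-\gam g_j=\lam(g_j-\gam g_{j-1})$ is a nice alternative that avoids invoking the Binet formula at all.
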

\begin{proof}
	The statement for (a) is true for $k=1,2$, and an easy induction argument gives the full result.  For (b), we use (a) and the closed form for $g_k$ provided by Lemma~\ref{L-FibLike}(d) to conclude that
	\begin{align*}
	(\gam-\lam)w_{k+1}(b,a)&=a(\gam^k-\lam^k)+\be b(\gam^{k-1}-\lam^{k-1})\\ &=(a(\gam^k-\gam \lam^{k-1})+\be b(\gam^{k-1}-\gam \lam^{k-2})\\&+a(\gam \lam^{k-1}-\lam^k)+\be b(\gam \lam^{k-2}-\lam^{k-1})\\ &=\gam (\gam-\lam)w_{k}(b,a)+\lam^{k-1}(\gam-\lam)(a+\be\lam^{-1}b).
	\end{align*}
	Dividing both sides by $\gam-\lam$ and using $\lam^{-1}=-\be^{-1} \gam$ gives the final result.
\end{proof}

\begin{cor}\label{C-Form}
	Let $n$ be a positive integer with $s(n)=s$.  Then there exists no $a',b'\ge 1$ with $n=a'g_s+\be b'g_{s-1}$.  Moreover, $(b,a)$ is an $n$-good pair if and only if $n=ag_{s-1}+\be bg_{s-2}$ and $a,b\ge 1$.
\end{cor}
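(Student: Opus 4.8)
The plan is to read everything off the two facts already established: the closed formula $w_k(b,a)=ag_{k-1}+\be bg_{k-2}$ of Lemma~\ref{L-Form}(a), and the definition $s(n)=\max_{a_1,a_2\ge 1}s(n;a_1,a_2)$. A preliminary observation I would record first is that every $(\al,\be)$-walk with $a_1,a_2\ge 1$ is strictly increasing from its second term onward, since $w_{k+2}=\al w_{k+1}+\be w_k\ge w_{k+1}+w_k>w_{k+1}$; consequently, for $s\ge 2$ the equality $w_s(b,a)=n$ determines $s(n;b,a)=s$ (the value $n$ cannot be attained at a second index $\ge 2$). I would also note that for any $n$ the walk $w_k(1,n)$ has $w_2=n$, so $s(n)\ge s(n;1,n)=2$; hence the hypothesis $s(n)=s$ forces $s\ge 2$, which is what legitimizes applying Lemma~\ref{L-Form}(a) at indices $s$ and $s+1$.

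For the first claim I would argue by contradiction. If $n=a'g_s+\be b'g_{s-1}$ with $a',b'\ge 1$, then Lemma~\ref{L-Form}(a) with $k=s+1$ gives $w_{s+1}(b',a')=a'g_{s}+\be b'g_{s-1}=n$, whence $s(n;b',a')\ge s+1$ and therefore $s(n)\ge s+1$, contradicting $s(n)=s$.

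For the second claim I would prove the two implications separately. If $(b,a)$ is $n$-good, then by definition $a,b\ge 1$ and $s(n;b,a)=s(n)=s$, i.e.\ $w_s(b,a)=n$; Lemma~\ref{L-Form}(a) with $k=s$ rewrites this as $n=ag_{s-1}+\be bg_{s-2}$. Conversely, if $a,b\ge 1$ and $n=ag_{s-1}+\be bg_{s-2}$, then Lemma~\ref{L-Form}(a) again gives $w_s(b,a)=n$, so $s(n;b,a)\ge s$; combined with $s(n;b,a)\le s(n)=s$ from the maximality in the definition of $s(n)$, this forces $s(n;b,a)=s(n)$, i.e.\ $(b,a)$ is $n$-good.

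The argument is short and I do not expect any genuine obstacle; the only point that requires care is the bookkeeping in the preliminary observation — confirming that a strictly increasing walk meets the value $n$ at a single well-defined index and that $s\ge 2$ so every invocation of Lemma~\ref{L-Form}(a) is valid — which is precisely why I would isolate that remark before starting. Everything else is a direct substitution into the formula of Lemma~\ref{L-Form}(a) together with the ``$\max$'' in the definition of $s(n)$.
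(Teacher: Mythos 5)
Your proof is correct and takes essentially the same route as the paper: both arguments apply Lemma~\ref{L-Form}(a) at indices $s$ and $s+1$ and then invoke the maximality built into the definition of $s(n)$. The only difference is that you spell out the supporting observations (monotonicity of the walk from the second term on, and $s(n)\ge 2$) that make $s(n;\cdot,\cdot)$ well-defined and legitimize the appeals to Lemma~\ref{L-Form}(a), which the paper leaves implicit.
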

\begin{proof}
	If there existed such $a',b'$, then this would imply that $w_{s+1}(b',a')=n$ by Lemma~\ref{L-Form}(a), a contradiction to how $s$ was defined.  If $(b,a)$ is an $n$-good pair, then again by this lemma we have $n=w_s(b,a)=ag_{s-1}+\be bg_{s-2}$.  Conversely, if $n= ag_{s-1}+\be bg_{s-2}$, then $w_k(b,a)$ satisfies $w_s(b,a)=n$, so this pair is $n$-good.
\end{proof}

We can now derive some structural results for $n$-good pairs.

\begin{lem}\label{L-Facts}
	Let $(b,a)$ be an $n$-good pair with $s=s(n)>2$.
	\begin{itemize}
		\item[(a)] $a-\al b-\ell g_s$ is not a positive multiple of $\be$ for any $\ell\ge 0$.
		\item[(b)] The pair $(b',a')$ is $n$-good if and only if there exists some $k\in \Z$ such that $a'=a+k\be g_{s-2}\ge 1$ and $b'=b-kg_{s-1}\ge 1$.
		\item[(c)] With $a'$, $b'$, and $k$ as above, we have $w_{s+1}(b',a')-w_{s+1}(b,a)=k(-\be)^{s-1}$.
	\end{itemize}
\end{lem}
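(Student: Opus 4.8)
The plan is to derive all three parts from what is already available: the closed form $w_k(b,a)=a g_{k-1}+\be b g_{k-2}$ of Lemma~\ref{L-Form}(a), which via Corollary~\ref{C-Form} tells us that $n=a g_{s-1}+\be b g_{s-2}$ (with $a,b\ge1$) characterizes $n$-good pairs and that there is no representation $n=a'g_s+\be b'g_{s-1}$ with $a',b'\ge1$, together with the arithmetic of $g_k$ from Lemma~\ref{L-FibLike}. The one preliminary move I would make first is to re-express $n$ in terms of $g_s$ and $g_{s-1}$: since $\be g_{s-2}=g_s-\al g_{s-1}$ (the recurrence, valid as $s-2\ge1$), Corollary~\ref{C-Form} gives $n=b g_s+(a-\al b)g_{s-1}$. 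For (a), argue by contradiction: if $a-\al b-\ell g_s=\be m$ for some $\ell\ge0$ and $m\ge1$, substitute $a-\al b=\ell g_s+\be m$ into this identity to get $n=(b+\ell g_{s-1})g_s+\be m g_{s-1}$; since $b+\ell g_{s-1}\ge1$ and $m\ge1$ this is exactly a representation forbidden by Corollary~\ref{C-Form}, a contradiction.

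For (b), Corollary~\ref{C-Form} says that a pair $(b',a')$ with $a',b'\ge1$ is $n$-good if and only if $a'g_{s-1}+\be b'g_{s-2}=a g_{s-1}+\be b g_{s-2}$, i.e. if and only if $(a'-a)g_{s-1}=\be(b-b')g_{s-2}$. Because $\gcd(g_{s-1},\be g_{s-2})=1$ by Lemma~\ref{L-FibLike}(b) (applicable since $s-2\ge1$), $g_{s-1}$ must divide $b-b'$; writing $b-b'=k g_{s-1}$ with $k\in\Z$ and cancelling $g_{s-1}$ forces $a'-a=\be k g_{s-2}$. Conversely every $k\in\Z$ yields a genuine solution of that linear equation, so the $n$-good pairs are precisely those of the form $b'=b-k g_{s-1}\ge1$, $a'=a+\be k g_{s-2}\ge1$.

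For (c), apply Lemma~\ref{L-Form}(b): since $w_s(b,a)=w_s(b',a')=n$, we get $w_{s+1}(b,a)=\gam n+\lam^{s-1}(a-\gam b)$ and likewise for $(b',a')$, so $w_{s+1}(b',a')-w_{s+1}(b,a)=\lam^{s-1}((a'-a)-\gam(b'-b))=\lam^{s-1}k(\be g_{s-2}+\gam g_{s-1})$. It then suffices to verify the identity $\be g_{s-2}+\gam g_{s-1}=\gam^{s-1}$, which falls out of the closed form in Lemma~\ref{L-FibLike}(d) once one uses $\gam\lam=-\be$ (the $\lam$-power terms cancel in pairs and the $\gam$-power terms collapse via $\gam^2+\be=\gam(\gam-\lam)$). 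This leaves the difference equal to $\lam^{s-1}\gam^{s-1}k=(\gam\lam)^{s-1}k=(-\be)^{s-1}k$, as claimed.

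I do not anticipate a genuine obstacle: everything is bookkeeping on top of Lemmas~\ref{L-FibLike} and~\ref{L-Form} and Corollary~\ref{C-Form}. The step requiring the most care is (b), where one must invoke the correct coprimality statement to pin down the general integer solution of the Diophantine equation — and check that every such $k$ really does solve it before imposing the positivity constraints. The algebraic identity needed in (c) is routine given the closed form for $g_k$.
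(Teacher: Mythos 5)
Your proof is correct. Parts (a) and (b) follow essentially the same route as the paper: (a) is the paper's argument (producing a forbidden representation $n=a'g_s+\be b'g_{s-1}$ with $a',b'\ge1$) presented a bit more cleanly by first rewriting $n=bg_s+(a-\al b)g_{s-1}$, and (b) is exactly the paper's Diophantine argument via Lemma~\ref{L-FibLike}(b), just spelled out in more detail. Part (c) is where you diverge: the paper applies Lemma~\ref{L-Form}(a) to get $w_{s+1}(b,a)=ag_s+\be bg_{s-1}$ and then uses the determinant-type identity $g_{s-1}^2-g_sg_{s-2}=(-\be)^{s-2}$ from Lemma~\ref{L-FibLike}(c), so the whole computation stays in integer arithmetic; you instead apply Lemma~\ref{L-Form}(b) to express $w_{s+1}$ as $\gam n+\lam^{s-1}(a-\gam b)$ and reduce to the auxiliary identity $\be g_{s-2}+\gam g_{s-1}=\gam^{s-1}$, which you correctly verify from the Binet form. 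Both computations are short; the paper's has the mild aesthetic advantage of avoiding the irrationals $\gam,\lam$ and reusing a lemma already in hand, whereas yours has the advantage of not needing the explicit identity $g_{s-1}^2-g_sg_{s-2}=(-\be)^{s-2}$. Either is fine.
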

\begin{proof}
	For (a), assume that this were not the case for some $\ell\ge 0$.  Set $a'=b+\ell g_{s-1}$ and $b'=\be^{-1}(a-\al b-\ell g_s)$, noting that by assumption these are positive integers.  With this we have that $n$ is equal to
	\begin{align*}
	ag_{s-1}+\be bg_{s-2}&=(a-\al b+\al b+\ell \al g_{s-1}+\ell \be g_{s-2}-\ell g_s)g_{s-1}+\be bg_{s-2}\\&=(a-\al b+\al a'-\ell g_s)g_{s-1}+\be(b+\ell g_{s-1})g_{s-2}\\&=(\al a'+\be b')g_{s-1}+\be a' g_{s-2}\\&=a'g_s+\be b' g_{s-1}.
	\end{align*}
	This contradicts Corollary~\ref{C-Form} and the assumption $s(n)=s$, so we conclude (a).
	
	For (b), note that by Corollary~\ref{C-Form} we have that $(b',a')$ is $n$-good if and only if this is a positive solution to the Diophantine equation $n=a'g_{s-1}+\be b'g_{s-2}$.  The result follows from Lemma~\ref{L-FibLike}(b) since $s>2$.
	
	For (c), we have by Lemma~\ref{L-Form}(a) and Lemma~\ref{L-FibLike}(c) that
	\begin{align*}
	w_{s+1}(b',a')-w_{s+1}(b,a)&=(a+k\be g_{s-2})g_s+(b-kg_{s-1})\be g_{s-1}-ag_s-bg_{s-1}\\&=-k\be (g_{s-1}^2-g_sg_{s-2})=-k\be (-\be)^{s-2}\\&=k(-\be)^{s-1}.
	\end{align*}
\end{proof}

To prove the next lemma, we make use of the following special case of the Frobenius coin problem~\cite{Syl}.
\begin{lem}\label{L-Coin}
	If $x,y,n$ are positive integers with $\gcd(x,y)=1$ and $n>xy$, then there exist integers $a,b\ge 1$ such that $n=ax+by$.
\end{lem}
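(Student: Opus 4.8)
The plan is to prove this directly rather than quoting the full Frobenius theorem. First I would use that $\gcd(x,y)=1$ makes multiplication by $y$ a bijection on $\Z/x\Z$; hence among the $x$ values $1\cdot y,\,2\cdot y,\,\dots,\,x\cdot y$ the residues modulo $x$ are pairwise distinct and exhaust $\Z/x\Z$. In particular there is a unique $b\in\{1,2,\dots,x\}$ with $by\equiv n\pmod x$. Setting $a=(n-by)/x$, which is an integer by the choice of $b$, we get $n=ax+by$ with $b\ge 1$; and since $b\le x$ we have $by\le xy<n$, so $n-by>0$ and therefore $a\ge 1$. This is exactly the asserted representation.

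I would also note that this is equivalent to the classical Frobenius bound: $n=ax+by$ has a solution with $a,b\ge 1$ if and only if $n-x-y$ is a nonnegative-integer combination of $x$ and $y$, which holds if and only if $n-x-y\ge (x-1)(y-1)$, i.e.\ if and only if $n>xy$. One could instead run the argument through an explicit B\'ezout identity $ux+vy=1$: the integer solutions of $n=ax+by$ are $(a,b)=(nu+ty,\,nv-tx)$ for $t\in\Z$, and one would have to check that the admissible range of $t$ contains an integer. That range has length $(n-x-y)/(xy)$, which need not be at least $1$ under the hypothesis $n>xy$, so this route needs an extra step; the residue-counting argument sidesteps it entirely and is the cleaner option.

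I do not anticipate any genuine obstacle. The only point requiring verification is the bijectivity of $b\mapsto by$ on $\Z/x\Z$, which is immediate from $\gcd(x,y)=1$. The sole place to be slightly careful is small cases — for instance $x=1$, where $b=1=x$ and the statement is trivial — but the argument above handles these uniformly.
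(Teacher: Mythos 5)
The paper does not prove Lemma~\ref{L-Coin} at all; it is stated as a known special case of the Frobenius coin problem with a citation to Sylvester. Your first paragraph supplies a correct, self-contained proof where the paper offers only a reference, so the comparison is really between a citation and an explicit argument. Your residue-counting route is clean and complete: since $\gcd(x,y)=1$, the map $b\mapsto by$ permutes $\Z/x\Z$, so there is a unique $b\in\{1,\dots,x\}$ with $by\equiv n\pmod x$; then $a=(n-by)/x$ is an integer, $b\ge1$ by choice, and $by\le xy<n$ gives $a>0$, hence $a\ge1$ since $a$ is an integer. Nothing is missing, and the edge case $x=1$ is handled uniformly as you note.

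One small caution on the second paragraph, which is side commentary rather than part of the proof: the chain of ``if and only if''s overstates Sylvester's theorem. It is true that $n>xy$ (equivalently $n-x-y\ge (x-1)(y-1)$) is \emph{sufficient} for $n-x-y$ to be a nonnegative integer combination of $x$ and $y$, but it is not necessary --- for instance with $x=3$, $y=5$, $n=11$ one has $n-x-y=3=3\cdot1+5\cdot0$ even though $3<(x-1)(y-1)=8$. Sylvester's theorem asserts that $xy-x-y$ is the largest \emph{non}-representable integer, not that all smaller integers fail. Since your actual proof never uses the converse direction, this does not affect the validity of the argument; it is just a mischaracterization of the classical statement.
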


\begin{lem}\label{L-abBound}
	If $(b,a)$ is $n$-good with $s(n)=s>2$, then $a\le (\be-1)g_s+\al b$.
\end{lem}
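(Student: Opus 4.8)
The plan is to argue by contradiction. Suppose $(b,a)$ is $n$-good with $s=s(n)>2$ but that the bound fails, i.e.\ $a>(\be-1)g_s+\al b$, equivalently $a-\al b>(\be-1)g_s$. I will then produce a nonnegative integer $\ell$ for which $a-\al b-\ell g_s$ is a positive multiple of $\be$, directly contradicting Lemma~\ref{L-Facts}(a).

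The key input is $\gcd(g_s,\be)=1$, which holds by Lemma~\ref{L-FibLike}(a) since $s>2\ge 1$; thus $g_s$ is invertible modulo $\be$, so the congruence $a-\al b-\ell g_s\equiv 0\pmod{\be}$ has a unique solution $\ell=\ell_0$ in $\{0,1,\dots,\be-1\}$. To invoke Lemma~\ref{L-Facts}(a) with this $\ell_0$ I need two things: that $\ell_0\ge 0$, which is true by construction; and that $a-\al b-\ell_0 g_s$ is positive. The latter follows from the contradiction hypothesis, since $\ell_0\le\be-1$ and $g_s>0$ give $\ell_0 g_s\le(\be-1)g_s<a-\al b$. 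As $a-\al b-\ell_0 g_s$ is positive and divisible by $\be$, it is a positive multiple of $\be$, contradicting Lemma~\ref{L-Facts}(a). Hence $a\le(\be-1)g_s+\al b$.

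I do not expect a real obstacle here; the one point demanding care is selecting $\ell_0$ in the residue-class window $\{0,\dots,\be-1\}$ rather than an arbitrary representative, since Lemma~\ref{L-Facts}(a) only speaks about $\ell\ge 0$ while we simultaneously need $\ell_0 g_s<a-\al b$ for positivity --- and the assumed inequality $a-\al b>(\be-1)g_s$ is precisely what reconciles these two constraints. For comparison, the Frobenius bound (Lemma~\ref{L-Coin}) only gives that $n\le \be g_s g_{s-1}$ --- otherwise $n=a'g_s+\be b'g_{s-1}$ with $a',b'\ge 1$, contradicting Corollary~\ref{C-Form} --- which yields only the weaker estimate $a\le\be g_s-1$; so the finer Lemma~\ref{L-Facts}(a) is essential.
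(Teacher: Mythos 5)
Your proof is correct and follows the paper's strategy exactly: assume $a-\al b>(\be-1)g_s$ and derive a contradiction with Lemma~\ref{L-Facts}(a) by exhibiting an $\ell\ge 0$ for which $a-\al b-\ell g_s$ is a positive multiple of $\be$. The only difference is mechanical: the paper produces $\ell$ by applying Lemma~\ref{L-Coin} to $a-\al b+g_s>\be g_s$, writing it as $r\be+\ell g_s$ with $r,\ell\ge 1$ and taking $\ell-1$, whereas you solve the congruence $\ell\equiv(a-\al b)g_s^{-1}\pmod{\be}$ directly and pick the representative $\ell_0\in\{0,\dots,\be-1\}$, which is automatically small enough for positivity given the contradiction hypothesis. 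Your route is a touch more elementary — it bypasses the Frobenius lemma in favor of bare invertibility of $g_s$ modulo $\be$ — but it is the same argument in substance, and both hinge on $\gcd(g_s,\be)=1$ from Lemma~\ref{L-FibLike}.
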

\begin{proof}
	If $a-\al b+g_s>\be g_s$, then because $\gcd(\be,g_s)=1$ by Lemma~\ref{L-FibLike}(b), there exist positive $r,\ell$ such that $a-\al b+g_s=r\be+\ell g_s$ by Lemma~\ref{L-Coin}. This implies that $a-\al b-(\ell-1) g_s$ is a positive multiple of $\be$. This contradicts Lemma~\ref{L-Facts}(a), so we conclude that $a-\al b\le (\be-1)g_s$.
\end{proof}

From this proof, we see that the condition $a\le (\be-1)g_s+\al b$ in Theorem~\ref{T-GMain} is already implied by the condition stating that $a-\al b-\ell g_{t+1}$ is not a positive multiple of $\be$ and hence could be omitted.  However, we feel it is useful to state this condition explicitly.

We are now ready to prove our characterization theorem.

\begin{proof}[Proof of Theorem~\ref{T-GMain}]
	Let $(b',a')$ be an $n$-good pair, and let $k$ be the non-negative integer such that $1\le b'-kg_{s-1}\le g_{s-1}$. By Lemma~\ref{L-Facts}(b) we have that $(b,a):=(b'-kg_{s-1},a'+k\be g_{s-2})$ is an $n$-good pair, and hence $n=ag_{s-1}+\be bg_{s-2}$ by Corollary~\ref{C-Form}. By construction we have $b\le g_{s-1}$, and because the pair $(b,a)$ is $n$-good, we also have $a\le (\be-1)g_s+\al b$ and that $a-\al b- \ell g_{s}$ is not a positive multiple of $\be$ for any $\ell \ge 0$ by Lemmas~\ref{L-abBound} and \ref{L-Facts}.  By taking $a^{\al,\be}(n)=a,\ b^{\al,\be}(n)=b$, and $t^{\al,\be}(n)=s-1$, we conclude that such integers exist.
	
	We next show that these integers are unique.  Assume that $n=ag_t+\be bg_{t-1}$ with $a,b,t$ as in the hypothesis of the theorem.  This implies that $s\ge t+1$ by Lemma~\ref{L-Form}(a). Assume that $s> t+1$. This implies by Lemma~\ref{L-Form}(a) that there exist $a',b'\ge 1$ such that \[ag_t+\be bg_{t-1}=n=a'g_{t+1}+\be b'g_t=(\al a'+\be b')g_t+\be a'g_{t-1}.\]  Because $t\ge 2$ by assumption, $g_{t}$ and $\be g_{t-1}$ are relatively prime.  Thus having two solutions to the Diophantine equation $n=x g_t+y\be g_{t-1}$ implies that there exists a $k$ such that
	\begin{align*}
	a'&=b+kg_t\\ 
	\be b'&=a-\al a'-k\be g_{t-1}=a-\al b-k\al g_t-k\be g_{t-1}\\&=a-\al b-kg_{t+1}.
	\end{align*}
	Because $b\le g_{t}$ and $a'\ge 1$, we must have $k\ge 0$ in order for the first equation to hold.  We assumed for $\ell \ge 0$ that $a-\al b-\ell g_{t+1}$ is not a positive multiple of $\be$, and since $b'\ge 1$ this implies that we can not have $k\ge 0$.  We conclude that no such $k$ exists, and hence we must have $s(n)=t+1$, from which it follows that $(b,a)$ is an $n$-good pair.  This is the unique pair with $1\le b\le g_t$ by Lemma~\ref{L-Facts}(b), so we conclude that $a,b$ are also unique.
	
	For the final results, note that in this proof we have already shown that $s=t+1$, and the results concerning $(b',a')$ follow from Lemma~\ref{L-Facts}.  Thus it only remains to bound $|w_{s+1}(b,a)-\gam n|$.  Because $w_s(b,a)=n$, it follows from Lemma~\ref{L-Form}(b)  that this is equal to $|\lam^{s-1}(a-\gam b)|=|\lam^t(a-\gam b)|$.  In order to bound this quantity, note that \begin{equation*}\label{E-alMain}a-\gam b\le (\be-1)g_{t+1}+(\al-\gam)b\le \f{\be-1}{\gam-\lam}(\gam^{t+1}-\lam^{t+1}),\end{equation*}
	where we used that $\al<\gam$ and the closed form of $g_{t+1}$.  On the other hand, we have
	\[
		a-\gam b> - \gam g_t=\f{-\gam}{\gam-\lam}(\gam^t-\lam^t).
	\]
	By using the rough bounds $\gam^k-\lam^k \le 2\gam^k$ and $1,\be-1\le \be$, we in total find that
	\[
		|a-\gam b|\le \f{2\be }{\gam-\lam} \gam^{t+1}.
	\]
	Multiplying this by $|\lam^t|$ and using $\lam \gam=-\be$ gives
	\[
		|\lam^t(a-\gam b)|\le \f{2\gam }{\gam-\lam}\be^{t+1},
	\]
	Because $-\lam >0$, we conclude the result.
\end{proof}
We can prove a somewhat nicer result when $\be=1$. 
\begin{cor}\label{C-alMain}
	Let $n$ be such that $s=s^{\al,1}(n)>2$ and let $a,b,t$ be the integers of Theorem~\ref{T-GMain}.
	\begin{itemize}
		\item The values $a,b,t$ are the unique integers satisfying $n=ag_t^{\al,1}+ bg_{t-1}^{\al,1},\ t\ge 2$, and $1\le a\le \al b\le \al g_t^{\al,1}$.  
		\item Every $(n,\al,1)$-good pair $(b',a')$ is of the form $(b+kg_t^{\al,1},a-k g_{t-1}^{\al,1})$ for some $k\ge 0$.  \iffalse In particular, there are more than $p$ distinct $(n,\al,1)$-good pairs if and only if $a>p g_{t-1}$.\fi 
		\item With $a',b',k$ as above, we have $w_{s+1}^{\al,1}(b',a')=\floor{\gam_{\al,1} n}-k$ if and only if $t$ is even and $w_{s+1}^{\al,1}(b',a')=\ceil{\gam_{\al,1} n}+k$ if and only if $t$ is odd.
	\end{itemize}
\end{cor}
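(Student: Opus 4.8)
The plan is to deduce all three bullets from Theorem~\ref{T-GMain} by specializing to $\be=1$, the third bullet requiring one genuinely new estimate. For the first bullet, observe that when $\be=1$ the term $(\be-1)g_{t+1}$ vanishes, so the Theorem's condition $a\le(\be-1)g_{t+1}+\al b$ becomes $a\le\al b$; since every positive integer is a positive multiple of $\be=1$, the Theorem's third condition ``$a-\al b-\ell g_{t+1}$ is not a positive multiple of $\be$ for all $\ell\ge0$'' just says $a-\al b\le 0$, which is already subsumed by $a\le\al b$; and $b\le g_t$ is the same as $\al b\le\al g_t$, while $1\le a\le\al b$ forces $b\ge1$. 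Thus any integer triple satisfying the conditions of the Corollary satisfies those of Theorem~\ref{T-GMain} (with $\be=1$), and conversely the distinguished triple $(a(n),b(n),t(n))$ of the Theorem satisfies the Corollary's conditions because $(b,a)$ is $n$-good; uniqueness is therefore inherited from Theorem~\ref{T-GMain}. The second bullet is literally the third clause of Theorem~\ref{T-GMain} with $\be=1$.

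For the third bullet I would first obtain an exact formula for $w_{s+1}(b',a')$. Let $(b',a')$ be any $n$-good pair, so $w_s(b',a')=n$; applying Lemma~\ref{L-Form}(b) with $k=s$ and using $s-1=t$ gives $w_{s+1}(b',a')=\gam n+\lam^t(a'-\gam b')$. Writing $(b',a')=(b+kg_t,a-kg_{t-1})$ as in the second bullet, we have $a'-\gam b'=(a-\gam b)-k(g_{t-1}+\gam g_t)$. Next I would establish the identity
\[
\lam^t(g_{t-1}+\gam g_t)=(-1)^t,
\]
using the closed form $g_m=(\gam^m-\lam^m)/(\gam-\lam)$ of Lemma~\ref{L-FibLike}(d) together with $\gam\lam=-1$ (the $\be=1$ case of $\gam\lam=-\be$): a two-line computation reduces the left side to $(-1)^t(\gam^{-1}+\gam)/(\gam-\lam)$, and then $\gam^{-1}=-\lam$ finishes it. This yields the clean formula
\[
w_{s+1}(b',a')=\gam n+\lam^t(a-\gam b)-k(-1)^t,
\]
so everything comes down to controlling $\lam^t(a-\gam b)$ for the distinguished pair $(b,a)$.

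The key estimate is that $|\lam^t(a-\gam b)|<1$. From $1\le a$ and $b\le g_t$ we get $a-\gam b>-\gam g_t$, and from $a\le\al b$ with $\al<\gam$ we get $a-\gam b<0$; since $|\lam|=\gam^{-1}$ this gives $|\lam^t(a-\gam b)|<\gam^{-t}\cdot\gam g_t=\gam^{1-t}g_t$, and a short calculation with the closed form of $g_t$ (using $t\ge2$ and $|\lam|<\gam$) shows $\gam^{1-t}g_t<1$. This is the main obstacle, since the general bound $2\be^{t+1}$ from Theorem~\ref{T-GMain} equals $2$ when $\be=1$ and is too weak to pin down a nearest integer. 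Combining this estimate with the sign of $\lam^t$ ($\lam^t>0$ for $t$ even and $\lam^t<0$ for $t$ odd, as $\lam<0$) shows $\lam^t(a-\gam b)\in(-1,0)$ when $t$ is even and $\lam^t(a-\gam b)\in(0,1)$ when $t$ is odd. Hence $w_{s+1}(b',a')$ is the unique integer in the open interval $(\gam n-k-1,\gam n-k)$ if $t$ is even and in $(\gam n+k,\gam n+k+1)$ if $t$ is odd; since $\gam$ is a root of $x^2-\al x-1$ and hence irrational, $\gam n\notin\Z$, so that integer is $\floor{\gam n}-k$ in the first case and $\ceil{\gam n}+k$ in the second. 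Finally, for any fixed $n$ the parity of $t=t(n)$ is determined, and $\floor{\gam n}-k$ and $\ceil{\gam n}+k=\floor{\gam n}+1+k$ are never equal, so the two displayed equalities in the Corollary each hold precisely for the asserted parity of $t$, which gives the ``if and only if''.
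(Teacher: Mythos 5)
Your proof is correct and takes essentially the same route as the paper: specialize Theorem~\ref{T-GMain} to $\be=1$ for the first two bullets, then for the third bound $|\lam^t(a-\gam b)|<1$ using $1\le a\le\al b<\gam b$, $b\le g_t$, and the closed form of $g_t$, letting the parity of $t$ decide between floor and ceiling. Your write-up is slightly cleaner in two small respects: the single estimate $\gam^{1-t}g_t<1$ treats both parities at once (the paper works out $t$ even and declares $t$ odd ``essentially the same''), and your explicit derivation of $w_{s+1}(b',a')=\gam n+\lam^t(a-\gam b)-k(-1)^t$ via the identity $\lam^t(g_{t-1}+\gam g_t)=(-1)^t$ keeps the sign straight where the displayed formula in Theorem~\ref{T-GMain} appears to have the sign reversed (it should be $-k(-\be)^t$), a discrepancy the paper's own corollary proof silently patches by writing $k(-1)^s$.
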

\begin{proof}
	Because $\be=1$, we have by Theorem~\ref{T-GMain} that $a\le \al b$, and from this it is immediate that $a-\al b-\ell g_{t+1}$ is not positive for any $\ell\ge 0$, so we can drop this condition.  It remains to prove the results concerning $w_{s+1}$.
	
	Assume that $t$ is even.  We know from Theorem~\ref{T-GMain} that $s=t-1$ and that \[w_{s+1}(b',a')-w_{s+1}(b,a)=k(-1)^s=-k,\] so it suffices to show that $w_{s+1}(b,a)=\floor{\gam n}$.  Observe that this is equivalent to having $\gam n-w_{s+1}(b,a)$ being non-negative and less than one. Since $n=w_s(b,a)$, by Lemma~\ref{L-Form}(b) this is equivalent to having
	\begin{equation}\label{E-Floor}
	0\le \lam^{t}(\gam b-a)<1.
	\end{equation}
	Note that when $\be=1$ we have $-1<\lam<0$ for all $\al\ge 1$.  Because $t$ is even, the only way \eqref{E-Floor} could be negative is if $\gam b-a<0$, but this is impossible since $a\le \al b<\gam b$. For the upper bound, using $\gam\lam=-1$ and that $t$ is even, we have
	\begin{align*}
	\lam^t(\gam b-a)&\le -\lam^{t-1}g_t-\lam^t=\f{-(\lam \gam)^t\lam^{-1}-\lam^{2t-1}}{\gam-\lam}-\lam^t
	\\&=\f{(-1)^t}{1+\lam^2}-\lam^t(1+\f{\lam^{t-1}}{\gam-\lam})\\&=\f{1}{1+\lam^2}-\lam^t(1+\f{\lam^{t-1}}{\gam-\lam})\\&<1-\lam^t(1+\f{\lam}{\gam-\lam})=1-\lam^t(\f{\gam}{\gam-\lam})<1,
	\end{align*}
	where we obtained the first strict inequality by using $|\lam|<1$ and that $t\ge 2$.  This proves the result for $t$ even, and the proof for $t$ odd is essentially the same.  We omit the details.
\end{proof}

\section{Applications}\label{S-App}
An immediate corollary of the second to last point of Theorem~\ref{T-GMain} is the following.
\begin{cor}\label{C-Pairs}
	Let $n$ be such that $s(n)>2$ and let $a=a(n),\ b=b(n)$, and $t=t(n)$ be as in Theorem~\ref{T-GMain}.  Then for any integer $p$, there are more than $p$ distinct $n$-good pairs if and only if $a>\be p g_{t-1}$.
\end{cor}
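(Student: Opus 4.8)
The plan is to read off the number of $n$-good pairs directly from the third bullet of Theorem~\ref{T-GMain} and then translate the resulting count into the stated inequality on $a$.

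First I would recall that, by Theorem~\ref{T-GMain}, the $n$-good pairs are precisely the pairs $(b+kg_t,\ a-k\be g_{t-1})$ with $k\ge 0$, and that distinct values of $k$ yield distinct pairs since $g_t\ge 1$ (because $g_1=1$, $g_2=\al\ge 1$, and $g_k$ is nondecreasing for $k\ge 1$). So counting $n$-good pairs amounts to counting the nonnegative integers $k$ for which $(b+kg_t,\ a-k\be g_{t-1})$ is a pair of positive integers. The condition $b+kg_t\ge 1$ holds automatically from $b\ge 1$ and $k\ge 0$, so the only binding constraint is $a-k\be g_{t-1}\ge 1$, i.e.\ $k\le (a-1)/(\be g_{t-1})$. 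Hence the number of $n$-good pairs equals exactly $\floor{(a-1)/(\be g_{t-1})}+1$.

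Next I would observe that, since $p$ is an integer, $\floor{(a-1)/(\be g_{t-1})}+1>p$ is equivalent to $\floor{(a-1)/(\be g_{t-1})}\ge p$, which in turn is equivalent to $(a-1)/(\be g_{t-1})\ge p$, i.e.\ to $a-1\ge p\be g_{t-1}$. As $a$ and $p\be g_{t-1}$ are integers, this last inequality is equivalent to $a> p\be g_{t-1}$, which is the claim.

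Every step is a direct consequence of Theorem~\ref{T-GMain} together with elementary manipulation of the floor function, so I do not expect any real obstacle here. The only points that merit a line of care are that $k=0$ always produces a valid pair (so the count is always at least $1$, consistent with the fact that $a\ge 1> p\be g_{t-1}$ whenever $p\le 0$), and the use of the integrality of $p$ and $a$ to pass between the strict and non-strict inequalities.
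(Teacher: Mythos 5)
Your proof is correct and follows essentially the same route as the paper: both read off the parametrization of $n$-good pairs from the third-to-last bullet of Theorem~\ref{T-GMain} and reduce the count to the positivity constraint $a-k\be g_{t-1}\ge 1$. The paper's version is terser (it simply observes that $a>\be p g_{t-1}$ makes $k=0,\dots,p$ all valid, and otherwise strictly fewer $k$ work), whereas you compute the exact count $\floor{(a-1)/(\be g_{t-1})}+1$ and then unwind the integrality; both are fine.
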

\begin{proof}
	If $a>\be p g_{t-1}$, then the pairs $(b+k g_t,a-k\be g_{t-1})$ with $0\le k\le p$ are all $n$-good.  Otherwise strictly fewer $k$ work.
\end{proof}

In order to prove Theorem~\ref{T-Pairs}, it will be convenient to make use of the following.
\begin{lem}\label{L-Ext}
	For $t\ge 2$, let $n_t=\be g_tg_{t+1},\ a_t=(\be-1)g_{t+1}+\al g_t$, and $b_t=g_t$. In the notation of Theorem~\ref{T-GMain}, we have $a(n_t)=a_t$, $b(n_t)=b_t$, and $t(n_t)=t$.
\end{lem}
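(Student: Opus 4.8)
The plan is to verify directly that the triple $(a_t,b_t,t)$ satisfies the three defining conditions of Theorem~\ref{T-GMain}, and then invoke the uniqueness clause of that theorem. First I would check that these are legitimate candidates: $t\ge 2$ is given, $b_t=g_t\ge 1$, and $a_t=(\be-1)g_{t+1}+\al g_t\ge 1$, so $(b_t,a_t)$ is a pair of positive integers. Using the recurrence $g_{t+1}=\al g_t+\be g_{t-1}$ one computes
\[
a_t g_t+\be b_t g_{t-1}=\big((\be-1)g_{t+1}+\al g_t+\be g_{t-1}\big)g_t=\big((\be-1)g_{t+1}+g_{t+1}\big)g_t=\be g_t g_{t+1}=n_t,
\]
which is the first bullet of the theorem. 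By Lemma~\ref{L-Form}(a) this also gives $w_{t+1}(b_t,a_t)=n_t$, so $s(n_t)\ge t+1>2$ and Theorem~\ref{T-GMain} applies.

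Next I would dispatch the inequalities of the second bullet: $b_t=g_t\le g_t$ holds trivially, and $a_t=(\be-1)g_{t+1}+\al g_t=(\be-1)g_{t+1}+\al b_t$, so the bound $a_t\le(\be-1)g_{t+1}+\al b_t$ holds with equality. The only bullet requiring a genuine argument is the divisibility condition. Here $a_t-\al b_t-\ell g_{t+1}=(\be-1)g_{t+1}-\ell g_{t+1}=(\be-1-\ell)g_{t+1}$. Since $\gcd(\be,g_{t+1})=1$ by Lemma~\ref{L-FibLike}(a), this quantity is a multiple of $\be$ exactly when $\be\mid \be-1-\ell$, i.e.\ when $\be\mid \ell+1$; but for $\ell\ge 0$ that forces $\ell+1\ge\be$, hence $\be-1-\ell\le 0$, so $(\be-1-\ell)g_{t+1}$ is never a \emph{positive} multiple of $\be$. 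Thus $(a_t,b_t,t)$ meets all three conditions, and by the uniqueness assertion of Theorem~\ref{T-GMain} we conclude $a(n_t)=a_t$, $b(n_t)=b_t$, and $t(n_t)=t$.

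There is no real obstacle in this argument; it is a routine check against the characterization theorem. The one place to stay careful is the divisibility step, where one must remember to use $\gcd(\be,g_{t+1})=1$ so that divisibility of $(\be-1-\ell)g_{t+1}$ by $\be$ reduces to the congruence $\ell\equiv -1\pmod{\be}$ on the index $\ell$.
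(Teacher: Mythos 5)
Your proposal is correct and follows essentially the same route as the paper: verify directly that $(a_t,b_t,t)$ satisfies the three conditions of Theorem~\ref{T-GMain} (with the divisibility check reducing to $(\be-1-\ell)g_{t+1}$ never being a positive multiple of $\be$, via $\gcd(\be,g_{t+1})=1$), then invoke uniqueness. The only cosmetic difference is that you explicitly justify $s(n_t)>2$ via Lemma~\ref{L-Form}(a) before applying the characterization theorem, which the paper leaves implicit; that is a small but worthwhile bit of care.
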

\begin{proof}
	One can verify that $n=a_t g_{t}+\be b_tg_{t-1}$ and that the relevant bounds of Theorem~\ref{T-GMain} hold for $t,a_t,b_t$. For any $\ell$, $a_t-\al b_t-\ell g_{t+1}=(\be-1-\ell)g_{t+1}$.  If $\ell\ge \be-1$ this quantity is not positive.  If $0\le \ell<\be-1$ we have $\gcd(\be,\be-1-\ell)<\be$, and because $\gcd(\be,g_{t+1})=1$ by Lemma~\ref{L-FibLike}, we conclude that $a_t-\al b_t-\ell g_{t+1}$ is never a positive multiple of $\be$ for $\ell\ge0$.  Thus $a_t,b_t$ must be equal to $a(n_t),b(n_t)$ by the uniqueness of these integers.
\end{proof}
We are now ready to prove our bounds on the number of $n$-good pairs.
\begin{proof}[Proof of Theorem~\ref{T-Pairs}]
	Given $n$, let $a,b,t$ be as in Theorem~\ref{T-GMain}.  By Corollary~\ref{C-Pairs}, $p(n)$ is the largest integer such that \begin{equation}\label{E-p}a>(p(n)-1) \be g_{t-1}.\end{equation}  Because $b\le g_t$ and $a\le (\be-1)g_{t+1}+\al b$, we have $a\le a_t$ as defined in Lemma~\ref{L-Ext}.  It follows that $p(n)\le p(n_t)$, so it will be enough to bound $p(n_t)$.  Observe that \begin{align}a_t&= (\be-1)g_{t+1}+\al g_t=(\be-1)(\al g_t+\be g_{t-1})+\al g_t\nonumber\\ &=\al\be g_t+(\be-1)\be g_{t-1}=(\al^2+\be-1 )\be g_{t-1}+\al\be^2 g_{t-2}.\nonumber\end{align}
	
	Let $p_t=p(n_t)$ and let $k_t$ be the largest integer such that $\al \be g_{t-2}>k_tg_{t-1}$.  From  the above equation and \eqref{E-p}, we have $p_t=\al^2+\be+k_t$.  One can inductively prove that $\al g_{t-2}\le g_{t-1}$ for $t\ge 2$, and hence we must have $k_t<\be$.  From this it follows that $p(n)\le p_t\le \al^2+2\be-1$.  To show that an $n$ achieves this bound, we consider $n_3$.  Because $g_1=1$ and $g_2=\al$, we have that $k_3=\be-1$, from which it follows that $p_3=\al^2+2\be-1$.
	
	Regarding the statement of the second result, we note that the equality of $\ceil{\gam^2}$ and $\al^2+\be+\ceil{\al\be\gam^{-1}}$ follows from the fact that $\gam^2=\al^2+\be+\al\be\gam^{-1}$.  We claim that proving the second result is equivalent to proving that $k_t$ as defined above is equal to $\ceil{\al\be\gam^{-1}}-1$ for all sufficiently large $t$.  Indeed, the second result is equivalent to the existence of infinitely many $n$ with $p(n)\ge \al^2+\be+\ceil{\al\be\gam^{-1}}-1$ and only finitely many $n$ with $p(n)\ge \al^2+\be+\ceil{\al\be\gam^{-1}}$.  Because there are only finitely many $n$ with $t(n)=t$ for any fixed $t$, having infinitely many $n$ with $p(n)\ge p$ is equivalent to having infinitely many $t$ such that $p(n)\ge p$ for some $n$ with $t(n)=t$, and this is equivalent to having infinitely many $t$ such that $p_t\ge p$ since $p_t\ge p(n)$ for all $n$ with $t(n)=t$.  Because $p_t=\al^2+\be+k_t$, we are left with proving that $k_t= \ceil{\al\be \gam^{-1}}-1$ for all sufficiently large $t$.
	
	Recall that $k_t$ is the largest integer satisfying $\al\be g_{t-2}>k_tg_{t-1}$.  By using the closed form of $g_t$, this is equivalent to $\gam^{t-2}(\al\be-\gam k_t )-\lam^{t-2}(\al\be-\lam k_t)>0$.  Because $\lam^t=o(\gam^t)$, for $t$ sufficiently large this quantity will be positive if and only if $\al\be>k_t\gam$.  By the maximality of $k_t$, we conclude that for $t$ sufficiently large we have $k_t=\ceil{\al \be \gam^{-1}}-1$ as desired.
	
	The final result follows from the first two after verifying that having $(\be-1)>\al\be\gam^{-1}$ for reals $\al,\be$ with $\al\ge 1$ is equivalent to having $\be<\al+1$, and since we are interested in the case where $\al,\be$ are integers, this is equivalent to having $\be\le \al$.
\end{proof}

We now prove several results that will be of use to us in later sections and which are also of independent interest.  The first is an implicit bound on $s(n)$ given $n$.

\begin{lem}\label{L-Chicken}
	Let $s\ge 2$ be an integer.  If $n> \be g_{s-1}g_{s-2}$, then $s(n)\ge s$.
\end{lem}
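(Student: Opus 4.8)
The plan is to prove the statement directly: given $n > \be g_{s-1}g_{s-2}$, I will produce positive integers $a,b$ with $n = a g_{s-1} + \be b g_{s-2}$. By Lemma~\ref{L-Form}(a), the walk with $w_1(b,a)=b$ and $w_2(b,a)=a$ then satisfies $w_s(b,a) = n$, and hence $s(n)\ge s$ directly from the definition $s(n) = \max_{a_1,a_2\ge 1} s(n;a_1,a_2)$.

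First I would dispose of the degenerate case $s=2$, where $g_{s-1} = g_1 = 1$ and $g_{s-2} = g_0 = 0$: the hypothesis is merely $n\ge 1$, and the pair $(b,a) = (1,n)$ works since $w_2(1,n) = n$, so $s(n)\ge 2$ for every positive integer $n$.

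For $s\ge 3$, set $x = g_{s-1}$ and $y = \be g_{s-2}$. Both are positive integers, since $g_k\ge 1$ for all $k\ge 1$, and $\gcd(x,y) = \gcd(g_{s-1},\be g_{s-2}) = 1$ by Lemma~\ref{L-FibLike}(b) applied with $k = s-2\ge 1$. The hypothesis $n > \be g_{s-1}g_{s-2} = xy$ is precisely the bound required by Lemma~\ref{L-Coin} (the Frobenius coin problem), which therefore yields integers $a,b\ge 1$ with $n = ax + by = a g_{s-1} + \be b g_{s-2}$. This is exactly the equation wanted, and the argument concludes as above.

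In truth there is no serious obstacle: the lemma is a repackaging of the Sylvester--Frobenius bound through the closed form of Lemma~\ref{L-Form}(a). The only points demanding care are the separate treatment of $s=2$ (where $g_0 = 0$ makes $y$ non-positive and the coin-problem machinery degenerates) and the verification that $s-2\ge 1$ so that the coprimality statement of Lemma~\ref{L-FibLike}(b) is actually applicable.
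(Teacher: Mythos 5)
Your proposal is correct and follows the same route as the paper: handle $s=2$ directly, then for $s\ge 3$ use the coprimality from Lemma~\ref{L-FibLike}(b) together with the Frobenius bound of Lemma~\ref{L-Coin} to produce positive $a,b$ with $n=ag_{s-1}+\be b g_{s-2}$, and conclude via Lemma~\ref{L-Form}(a). The only cosmetic difference is that the paper routes the last step through Corollary~\ref{C-Form} rather than invoking Lemma~\ref{L-Form}(a) directly.
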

\begin{proof}
	The result is trivial if $s=2$, so assume $s>2$.  Note that this implies $\gcd(g_{s-1},\be g_{s-2})=1$ by Lemma~\ref{L-FibLike}(b).  By Corollary~\ref{C-Form}, $s(n)$ is the largest integer such that there exists some $a,b\ge 1$ with $n= ag_{s(n)-1}+\be bg_{s(n)-2}$.  Thus it will be enough to show that there exist positive $a,b$ such that $n=ag_{s-1}+\be b g_{s-2}$, and this immediately follows from Lemma~\ref{L-Coin}.
\end{proof}
From this we can get a more explicit bound on $s(n)$.
\begin{prop}\label{P-Bound}
	For all $n$ with $s=s(n)>2$, we have
	\[
	\half \log_\gam(n)-1\le \s(n)\le \log_\gam(n)+2.
	\]
	
	Moreover, we have $s^{1,1}(n)\ge \half \log_\phi(n)+2$, where $\phi$ denotes the golden ratio.
\end{prop}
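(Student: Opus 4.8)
The plan is to sandwich $n$ between two short expressions in the sequence $g_k$ using the structural results already proved, and then convert these to bounds on powers of $\gam$. Two elementary facts handle the conversions. First, $g_k\le\gam^k$ for all $k\ge 0$: this follows by induction from $g_0=0<1$, $g_1=1<\gam$, and $\al\gam^k+\be\gam^{k-1}=\gam^{k+1}$. Second, $g_k+\be g_{k-1}\ge\gam^{k-1}$ for all $k\ge 1$: here I would note that $h_k:=g_k+\be g_{k-1}$ satisfies the same recurrence $h_{k+1}=\al h_k+\be h_{k-1}$, check the base cases $h_1=1=\gam^0$ and $h_2=\al+\be\ge\gam$ (the latter equivalent to $\al+\be\ge 1$ after clearing the square root in the definition of $\gam$), and run the same induction via $\al\gam^{k-1}+\be\gam^{k-2}=\gam^k$. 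Note that the superficially weaker $g_{k}\ge\gam^{k-1}$ is \emph{false} in general (e.g.\ $g_4^{1,1}=3<\phi^3$), so the extra term in $h_k$ is genuinely needed.

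For the upper bound, Corollary~\ref{C-Form} provides an $n$-good pair $(b,a)$ with $n=ag_{s-1}+\be bg_{s-2}$ and $a,b\ge 1$, so $n\ge g_{s-1}+\be g_{s-2}\ge\gam^{s-2}$ by the second fact above; this is exactly $s\le\log_\gam n+2$. For the lower bound, I would apply Lemma~\ref{L-Chicken} with $s+1$ in place of its parameter $s$: the contrapositive then says that $s(n)\le s$ forces $n\le\be g_sg_{s-1}$, which applies since $s(n)=s$. Using $g_k\le\gam^k$ and $\be<\gam^2$ gives $n\le\be g_sg_{s-1}\le\be\gam^{2s-1}<\gam^{2}\cdot\gam^{2s-1}=\gam^{2s+1}$, so $\log_\gam n<2s+1$ and hence $s>\half(\log_\gam n-1)\ge\half\log_\gam n-1$.

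For the final assertion, specialize to $\al=\be=1$, so $\gam=\phi$ and $g_k=f_k$. The same contrapositive of Lemma~\ref{L-Chicken} now gives $n\le g_sg_{s-1}=f_sf_{s-1}$, so it suffices to prove the purely Fibonacci inequality $f_sf_{s-1}\le\phi^{2s-4}$ for all $s\ge 2$; this yields $\log_\phi n\le 2s-4$, i.e.\ $s\ge\half\log_\phi n+2$. I would establish $f_sf_{s-1}\le\phi^{2s-4}$ either from Binet's formula — a short computation gives $5f_sf_{s-1}=\phi^{2s-1}+\psi^{2s-1}+(-1)^s$ with $\psi=(1-\sqrt5)/2=-\phi^{-1}$, after which dividing by $\phi^{2s-4}$ and using $\phi^3=2+\sqrt5$ leaves error terms that are easily bounded, with equality exactly at $s=2$ — or by induction on $s$, using $f_{s+1}f_s=f_s^2+f_sf_{s-1}$ together with the auxiliary bound $f_s^2\le\phi^{2s-3}$ (itself elementary from Binet), so that $f_{s+1}f_s\le\phi^{2s-3}+\phi^{2s-4}=\phi^{2s-2}$.

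Essentially all of the structural content sits in Corollary~\ref{C-Form} and Lemma~\ref{L-Chicken}, and the rest is bookkeeping; the one place requiring care is nailing down the additive constants. In particular, the upper bound hinges on having $g_{s-1}+\be g_{s-2}\ge\gam^{s-2}$ rather than the false $g_{s-1}\ge\gam^{s-2}$, and in the $\al=\be=1$ case the constant $+2$ is exactly what falls out of the sharp inequality $f_sf_{s-1}\le\phi^{2s-4}$ — indeed the bound is attained at $n=g_3^{1,1}g_4^{1,1}=6$, for which $s^{1,1}(6)=4$.
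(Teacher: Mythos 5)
Your proof is correct and follows essentially the same route as the paper: both use Theorem~\ref{T-GMain}/Corollary~\ref{C-Form} to get $n\ge g_{s-1}+\be g_{s-2}$ and Lemma~\ref{L-Chicken} to get $n\le\be g_{s-1}g_s$, then convert these to powers of $\gam$ (and for $\al=\be=1$, sharpen the $n\le f_{s-1}f_s$ bound to $\phi^{2s-4}$). Your two inductive lemmas $g_k\le\gam^k$ and $g_k+\be g_{k-1}\ge\gam^{k-1}$ replace the paper's more laborious term-by-term estimates cleanly, but the structural content and the constants obtained are the same.
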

\begin{proof}
	Let $t=s-1$. From Theorem~\ref{T-GMain} and Lemma~\ref{L-Chicken}, we have \[g_{t}+\be g_{t-1}\le n \le \be g_{t}g_{t+1}\]  We wish to turn these bounds for $n$ into bounds for $t$.
	
	By using the closed form for $g_t$, $\lam\gam^{-1}=-\be\gam^{-2}$, and $\gam\ge 1$; we find 
	\begin{align*}
	n\ge g_t+\be g_{t-1}&=\rec{\gam+\be \gam^{-1}}(\gam^t-\lam^t+\be \gam^{t-1}+\be \lam^{t-1})\\&=\f{\gam^t}{\gam+\be \gam^{-1}}(1-(-\be \gam^{-2})^t+\be\gam^{-1}+\be(-\be \gam^{-2})^{t-1}\gam^{-1})\\&=\f{\gam^t}{\gam+\be \gam^{-1}}(1+\be \gam^{-1}-(-\be \gam^{-2})^t(1-\gam))\\ &\ge \f{\gam^t}{\gam+\be \gam^{-1}}(1+\be \gam^{-1}-(\be \gam^{-2})^t(\gam-1)).
	\end{align*}
	Note that by definition, $\gam^2\ge (\gam-\al/2)^2= \rec{4}(\al^2+4\be)\ge \be$, and in particular $\be^{t-1}\le \gam^{2t-2}$.  By using this and $t\ge 1$, we find that the above quantity is at least
	\begin{align*}
	\f{\gam^t}{\gam+\be \gam^{-1}}(1+\be\gam^{-1}-\be \gam^{-2}(\gam-1))=\f{\gam^t}{\gam+\be \gam^{-1}}(1+\be \gam^{-2})=\gam^{t-1}.
	\end{align*}
	Thus $n\ge \gam^{t-1}$, which implies $\log_\gam n\ge t-1$. Plugging in $t=s-1$ gives our desired upper bound on $s$.
	
	To establish a lower bound, we observe that $n\le \be g_{t+1}^2$ since $t\ge 2$.  Because $g_{t+1}=\rec{\gam-\lam}(\gam^{t+1}-\lam^{t+1})$ with $\lam<0$, we have $g_{t+1}\le \rec{\gam-\lam} \gam^{t+1}$ if $t$ is odd, and if $t$ is even we have $g_{t+1}\le g_{t+2}\le \rec{\gam-\lam} \gam^{t+2}$.  Further observe that $(\gam-\lam)^2=\al^2+4\be\ge \be$.  In total we conclude that 
	\begin{align*}
	\log_\gam n\le \log_\gam \be g_{t+1}^2\le \log_\gam \f{\be \gam^{2t+4}}{(\gam-\lam)^2}\le \log_\gam \gam^{2t+4}=2t+4, \nonumber
	\end{align*}
	and plugging in $t=s-1$ gives the desired general lower bound for $s$.
	
	For $\al=\be=1$, we use the more familiar notation $f_k=g_k$.  By using the closed form for $f_k$ and that $t\ge 2$, we find that 
	\begin{align*}
	n&\le f_tf_{t+1}=\rec{5}(\phi^t-(-\phi^{-1})^t)(\phi^{t+1}-(-\phi^{-1})^{t+1})\\&=\rec{5}\phi^{2t+1}(1-(-1)^t\phi^{-2t})(1+(-1)^t\phi^{-2t-2}).
	\end{align*}
	Because $\phi>1$ and $2t,2t+2\ge 0$, we have \[(1-(-1)^t\phi^{-2t})(1+(-1)^t\phi^{-2t-2})\le (1+\phi^{-2t})(1-\phi^{-2t-2}),\] and this is then maximized when $t$ is as small as possible.  Because $t\ge 2$, we have $n\le \rec{5}\phi^{2t+1}(1+\phi^{-4})(1-\phi^{-6})$.  Taking logarithms on both sides gives $\log_\phi n\ge 2t-2$, and plugging in $t=s-1$ gives the desired result.
\end{proof}
\iffalse We note that the upper bound of Proposition~\ref{P-Bound} is best possible by considering $\al=1$ and $\be$ tending to infinity.\fi

We next consider an algorithm for efficiently finding $n$-slow walks.  We note that this is essentially a generalization of the algorithm given in \cite{JK}.

\begin{prop}\label{P-Alg}
	For any fixed $\al,\be$, there exists an algorithm that runs in $O((\log n)^2)$ time which takes as input $n$ and returns all $(n,\al,\be)$-good pairs.
\end{prop}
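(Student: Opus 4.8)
The plan is to read off an algorithm directly from the characterization theorem. By Lemma~\ref{L-Chicken} we may first dispose of the values $n\le\al\be$ (finitely many for fixed $\al,\be$, handled by a lookup table; note that if $s(n)=2$ there are infinitely many good pairs and the algorithm should simply report the family $\{(x,n):x\ge 1\}$), so assume $n>\al\be$ and hence $s(n)>2$. By Theorem~\ref{T-GMain} the set of $n$-good pairs is completely determined by the triple $(a(n),b(n),t(n))$: it equals $\{(b(n)+kg_{t(n)},\,a(n)-kg_{t(n)-1}\be):0\le k\le K\}$, where $K$ is the largest integer with $a(n)-K\be g_{t(n)-1}\ge 1$, and by Theorem~\ref{T-Pairs} there are at most $\al^2+2\be-1=O_{\al,\be}(1)$ of these. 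So the task reduces to computing that triple.

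First I would compute the prefix $g_{-1},g_0,g_1,\dots,g_{M+1}$, where $M$ is the least index with $g_M>n$; by Lemma~\ref{L-FibLike}(d) the $g_k$ grow geometrically, so $M=O(\log n)$, each $g_k$ with $g_k\le n$ is an $O(\log n)$-bit integer, and this costs $O((\log n)^2)$. By the first bullet of Theorem~\ref{T-GMain} (applied with $a,b\ge 1$) we have $n\ge g_{t(n)}$, so $t(n)\le M<M+1$. Next, since $g_{M}>n$ we have $\be g_{M}>n$, so at ``level'' $M+1$ the equation $n=Ag_{M+1}+\be Bg_{M}$ — whose coefficients are coprime by Lemma~\ref{L-FibLike}(b) — has, among its integer solutions (one Euclidean computation, $O((\log n)^2)$), a unique one with $1\le B\le g_{M+1}$, and necessarily $A\le 0$. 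Now sweep downward: for $t=M,M-1,\dots$, use $g_{t+1}=\al g_t+\be g_{t-1}$ to rewrite the current representation as $n=(\al A+\be B)g_t+\be A g_{t-1}$, i.e.\ replace $(B,A)$ by $(A,\ \al A+\be B)$; then add an appropriate multiple of $g_t$ to $B$ and the same multiple of $\be g_{t-1}$ to $A$ so that $1\le B\le g_t$; if now $A\ge 1$, stop and output the triple $(A,B,t)$ together with the family above. By Corollary~\ref{C-Form}, $s(n)-1$ is exactly the \emph{largest} $t$ for which $n=Ag_t+\be Bg_{t-1}$ admits a solution with $A,B\ge1$, so the first $t$ reached going downward with $A\ge 1$ is $t(n)$, and the pair $(B,A)$ produced is then the canonical $(b(n),a(n))$ (it has $1\le B\le g_{t(n)}$, and the other two bullets of Theorem~\ref{T-GMain} hold automatically by Lemmas~\ref{L-abBound} and~\ref{L-Facts}); that the sweep terminates before running off the bottom is guaranteed since $t(n)\ge 2$.

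Correctness is thus essentially a restatement of Theorem~\ref{T-GMain}, Corollary~\ref{C-Form}, and Lemma~\ref{L-FibLike}(b), so the main point to check is the running time, and this is where I expect the only real work. The key observation is that every level of the downward sweep costs only $O(\log n)$: while we have not yet found a positive solution (that is, for $t\ge t(n)$), the level-$(t{+}1)$ pair has $1\le B_{t+1}\le g_{t+1}$ and $A_{t+1}\le 0$, and from $n=A_{t+1}g_{t+1}+\be B_{t+1}g_t\ge A_{t+1}g_{t+1}+\be g_t$ one gets $|A_{t+1}|\le \be g_t$; since the pre-reduction $B$-value at level $t$ is precisely $A_{t+1}$, reducing it into $[1,g_t]$ requires only $O(\be)=O_{\al,\be}(1)$ additions of $g_t$ (and of $\be g_{t-1}$ to $A$), each an $O(\log n)$-bit operation. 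Hence the sweep, which touches $O(\log n)$ levels, runs in $O((\log n)^2)$ time; together with the $O((\log n)^2)$ for building the prefix and the single initial Euclidean step, and $O(\log n)$ to print the $O(1)$ output pairs, the total is $O((\log n)^2)$, as claimed. (The analogous $\be=1$ algorithm of \cite{JK} is recovered as a special case; one may also note, via the shift $(w_1,w_2,\dots)\mapsto(w_2,w_3,\dots)$, that the valid levels form an interval $\{2,\dots,t(n)\}$, which justifies stopping at the first success from above but is not otherwise needed.)
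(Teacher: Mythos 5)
Your proof is correct, but it takes a genuinely different route from the paper's. The paper sweeps $t$ \emph{upward} from $t=1$, at each level performing a fresh extended Euclidean/modular-inverse step to produce the unique $(a,b)$ with $n=ag_t+\be bg_{t-1}$ and $1\le b\le g_t$, and then explicitly checks all the remaining conditions of Theorem~\ref{T-GMain} (including the divisibility condition) to decide whether $t=t(n)$; correctness is justified by the uniqueness of the triple, and the cost is $O(\log n)$ per level times $O(\log n)$ levels. You instead sweep \emph{downward}: one Euclidean computation at the top level $M+1$ (where $g_M>n$ forces $A\le 0$), followed by propagating the representation via the recurrence $g_{t+1}=\al g_t+\be g_{t-1}$ and renormalizing $B$ into $[1,g_t]$, stopping at the first $t$ with $A\ge 1$. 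Your termination criterion never needs the divisibility condition at all — instead you invoke Corollary~\ref{C-Form} together with Lemma~\ref{L-Form}(a) to see that $t(n)$ is precisely the largest $t$ admitting a positive solution, so the first success on the way down is the answer; the divisibility and size conditions then come for free from Lemma~\ref{L-abBound} and Lemma~\ref{L-Facts}(a). The real extra content in your version is the bound $|A_{t+1}|<\be g_t$ whenever $A_{t+1}\le 0$ and $1\le B_{t+1}\le g_{t+1}$, which ensures each downward step is $O(\be)=O_{\al,\be}(1)$ big-integer additions rather than a new $O(\log n)$-step Euclidean. Both arguments achieve the stated $O((\log n)^2)$; yours uses a single Euclidean rather than $O(\log n)$ of them. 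One small slip: when renormalizing you say to "add an appropriate multiple of $g_t$ to $B$ and the same multiple of $\be g_{t-1}$ to $A$"; the two shifts must have opposite signs (add $kg_t$ to $B$, subtract $k\be g_{t-1}$ from $A$) in order to preserve $n=Ag_t+\be Bg_{t-1}$, though the intended lattice translation is clear from context.
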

\begin{proof}
	If $n\le \al\be$ then one can determine all of this information in $O(1)$ time, so assume $n>\al\be$.  By Lemma~\ref{L-Chicken}, this implies that $s(n)>2$.  By Theorem~\ref{T-GMain} it is enough to determine $a(n),\ b(n)$, and $t(n)$. 
	
	Set $t=0$ and compute $g_0,g_1$.  Assume that one has computed $g_{t},g_{t-1}$ and that the list of $n$-good pairs have not yet been found.  Set $t:=t+1$ and then $g_t:=\al g_{t-1}+\be g_{t-2}$ in $O(1)$ time.  Compute $b:= n(\be g_{t-1})^{-1} \mod g_t$, which can be done in $O(\log g_t)$ time by using the extended Euclidean algorithm \cite{C}, and choose the representative of $b$ so that $1\le b\le g_t$.  Set $a:=(n-\be g_{t-1}b)/g_t$.  Note that $a$ and $b$ are the unique integers with $ag_t+\be bg_{t-1}=n$ and $1\le b\le g_t$ given $t$.  If $(a,b)$ satisfies the remaining conditions of Theorem~\ref{T-GMain} (which can be checked in $O(1)$ time), then we conclude that $a(n)=a,\ b(n)=b,\ t(n)=t$ by the uniqueness of these integers.  Otherwise it must be that $t(n)\ne t$.
	
	One continues through the above procedure until it eventually computes $a(n),\ b(n),$ and $t(n)$.  Checking each value of $t$ takes at most $O(\log n)$ time, and because $t(n)\le \log_\gam(n)+1$ by Proposition~\ref{P-Bound}, we need to check at most $O(\log n)$ values of $t$.  We conclude the result.
\end{proof}

This algorithm can be improved when $\be=1$.
\begin{prop}
	For any fixed $\al$, there exists an algorithm that runs in $O(\log n)$ time which takes as input $n$ and returns all $(n,\al,1)$-good pairs.
\end{prop}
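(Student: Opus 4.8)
The plan is to imitate the proof of Proposition~\ref{P-Alg}, but to use the identity in Lemma~\ref{L-FibLike}(c) to replace the per-step call to the extended Euclidean algorithm (costing $O(\log n)$ time) by an $O(1)$ computation. Since $\be=1$, taking $k=t-1$ in Lemma~\ref{L-FibLike}(c) gives $g_t^2-g_{t-1}g_{t+1}=(-1)^{t-1}$; reducing this modulo $g_t$ and using $g_{t+1}\equiv g_{t-1}\pmod{g_t}$ yields $g_{t-1}^2\equiv(-1)^t\pmod{g_t}$. Hence the inverse of $g_{t-1}$ modulo $g_t$, which exists by Lemma~\ref{L-FibLike}(b), equals $g_{t-1}$ when $t$ is even and $g_t-g_{t-1}$ when $t$ is odd, and in either case can be written down using $O(1)$ arithmetic operations once $g_t$ and $g_{t-1}$ are known.

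Concretely, I would first dispose of the case $n\le \al=\al\be$ in $O(1)$ time, so assume $n>\al$; then $s(n)>2$ by Lemma~\ref{L-Chicken} and Corollary~\ref{C-alMain} applies. Next, compute $g_0,g_1,g_2,\dots$ from the recurrence until reaching the first index $M$ with $g_M>n$; since $g_k=\Theta(\gam^k)$ this takes $O(\log n)$ steps. Because $n=a(n)g_{t(n)}+b(n)g_{t(n)-1}\ge g_{t(n)}+g_{t(n)-1}>g_{t(n)}$, we have $2\le t(n)\le M-1$. Now for each $t=2,3,\dots,M-1$, in $O(1)$ time: compute the inverse $c$ of $g_{t-1}$ modulo $g_t$ by the formula above, let $b$ be the representative of $cn$ modulo $g_t$ lying in $\{1,\dots,g_t\}$, and set $a:=(n-bg_{t-1})/g_t$; then $(a,b)$ is the unique integer pair with $n=ag_t+bg_{t-1}$ and $1\le b\le g_t$. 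If $1\le a\le \al b$, then by the uniqueness asserted in Corollary~\ref{C-alMain} we must have $a(n)=a$, $b(n)=b$, $t(n)=t$, and we stop; otherwise $t(n)\ne t$ and we proceed. Exactly one value of $t$ passes this test. Finally, having found $a,b,t$, output the pairs $(b+kg_t,\,a-kg_{t-1})$ for $k=0,1,\dots$ while $a-kg_{t-1}\ge 1$; by Corollary~\ref{C-alMain} this is precisely the set of $(n,\al,1)$-good pairs, and there are at most $\al^2+1=O(1)$ of them by Theorem~\ref{T-Pairs}, so this step is $O(1)$.

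Summing up: the precomputation of the $g_k$ is $O(\log n)$, the search over $t\in\{2,\dots,M-1\}$ is $O(M)=O(\log n)$ iterations of $O(1)$ work each, and the final enumeration is $O(1)$, for a total of $O(\log n)$. The only substantive new idea is the observation in the first paragraph that $g_{t-1}^{-1}\bmod g_t$ is just $\pm g_{t-1}$, which is what removes the extra logarithmic factor present in Proposition~\ref{P-Alg}; the rest is routine bookkeeping. The minor points that need care are normalizing $b$ into $\{1,\dots,g_t\}$ rather than $\{0,\dots,g_t-1\}$ so that $b\ge 1$, and checking that the uniqueness in Corollary~\ref{C-alMain} really does force exactly one $t$ in the searched range to satisfy $1\le a\le\al b$.
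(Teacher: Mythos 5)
Your proof is correct, but it takes a genuinely different route from the paper's. The paper's proof works backward: it uses the third bullet of Corollary~\ref{C-alMain} (that $w_{s+1}(b(n),a(n))$ equals $\floor{\Gam n}$ or $\ceil{\Gam n}$) to set up two candidate ``reverse'' sequences $u_1=\floor{\Gam n},u_2=n,u_{k+2}=u_k-\al u_{k+1}$ and $v_1=\ceil{\Gam n},v_2=n,v_{k+2}=v_k-\al v_{k+1}$, runs each until it goes non-positive, and reads off $a(n),b(n)$ from the last two positive terms of whichever sequence lasts longer. Your proof instead keeps the forward search of Proposition~\ref{P-Alg} and observes that for $\be=1$ the per-$t$ modular inverse can be done in closed form: the Cassini-type identity of Lemma~\ref{L-FibLike}(c) gives $g_{t-1}^2\equiv(-1)^t\pmod{g_t}$, so $g_{t-1}^{-1}\bmod g_t$ is $g_{t-1}$ or $g_t-g_{t-1}$, and the $O(\log g_t)$ extended-Euclid call is replaced by $O(1)$ arithmetic. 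Both are correct and both land at $O(\log n)$. What each buys: your argument isolates cleanly the sole place where $\be=1$ matters (the formula $g_{t-1}^2\equiv -(-\be)^{t-2}\pmod{g_t}$ for general $\be$ still leaves a $\be$-power to invert, which is not obviously $O(1)$), and it is a strict refinement of the general algorithm rather than a new one; the paper's backward-walk argument is arguably shorter and avoids modular arithmetic entirely, but leans on the $\floor{\Gam n}/\ceil{\Gam n}$ clause of Corollary~\ref{C-alMain}, which again has no clean analogue for $\be>1$. One small bookkeeping point you handled correctly and which is worth keeping: normalizing $b$ into $\{1,\dots,g_t\}$ rather than $\{0,\dots,g_t-1\}$ so that the positivity requirement in Corollary~\ref{C-alMain} is met before the $1\le a\le\al b$ test is applied.
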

\begin{proof}
	By Corollary~\ref{C-alMain}, we know that $w_{s(n)+1}(b(n),a(n))=\floor{\gam n}$ or $\ceil{\gam n}$.  Motivated by this, we compute the two sequences $u_k,v_k$ defined by $u_1=\floor{\gam n},\ u_2=n,\ u_{k+2}=u_k-\al u_{k+1}$ for $k>2$; and $v_1=\ceil{\gam n},\ v_2=n,\ v_{k+2}=v_k-\al v_{k+1}$ for $k>2$.  The point with this is that if, say, $w_{s(n)+1}(b(n),a(n))=\floor{\gam n}$, the terms of $u_k$ will be exactly the terms of $w_k(b(n),a(n))$ but written in reverse order.  Thus by Corollary~\ref{C-alMain}, exactly one of these sequences $u_k,v_k$ will be the reverse of the sequence we are looking for.
	
	With this in mind, the algorithm works by computing each of these two sequences until the terms become non-positive.  Whichever sequence has more terms must correspond to $w_k(b(n),a(n))$.  In particular, its last two positive numbers will be $a(n)$ and $b(n)$, and from this we can find every $(n,\al,1)$-good pair.  It takes $O(1)$ time to compute each term in each of the sequences, and the longest sequence has length at most $O(\log n)$ by Proposition~\ref{P-Bound}, so we conclude the result.

\end{proof}
\section{Densities}\label{S-Den}
\subsection{A General Method}
A number of density results related to slow (1,1)-walks were proven in \cite{CGS}.  The same proof ideas easily generalize to $(\al,1)$-walks for any fixed $\al$, though the computations become messier. However, it is not immediately obvious how to apply these ideas to $(\al,\be)$-walks for $\be>1$ due to the divisibility condition of Theorem~\ref{T-GMain}.  Before proving Theorem~\ref{T-Den}, we first present a general method that can be used to get around this obstacle.

We will say that a pair $(a,b)$ is $t$-divisible if $a-\al b-\ell g_{t+1}$ is not a positive multiple of $\be$ for any $\ell\ge 0$.  Let $a_{b,t}^-$ and $a_{b,t}^+$ be real numbers depending on $b$ and $t$.  We will say that a set of positive integers $S$ is an interval set with respect to $a_{b,t}^-$ and $a_{b,t}^+$ if for all integers $t\ge 2$ and $1\le b\le g_t$ we have {\footnotesize\begin{equation}\label{E-Int}\{m\in S:b(m)=b,t(m)=t\}=\{ag_t+\be bg_{t-1}:a_{b,t}^-< a\le a_{b,t}^+,(a,b)\tr{ is }t\tr{-divisible}\}.\end{equation}}  By Theorem~\ref{T-GMain}, this essentially says that $S$ can be defined as the set of $m$ whose $a(m)$ lies in some interval depending only on $b(m)$ and $t(m)$. We note that we do not require $a_{b,t}^-$ and $a_{b,t}^+$ to be integers.  
	
All of the density results of \cite{CGS} were for interval sets or the complements of interval sets.  The main interval set of interest to us will be the following.

\begin{lem}\label{L-Sp}
	Let $S_p$ be the set of integers with $p(n)>p$.  Then $S_p$ is an interval set with respect to $a^-_{b,t}=\be p g_{t-1}$ and $a^+_{b,t}=(\be-1)g_{t+1}+\al b$.
\end{lem}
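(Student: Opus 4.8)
The plan is to unwind the definitions and show that the defining conditions of $S_p$ match the interval-set condition \eqref{E-Int} with the stated endpoints. The key observation is that by Corollary~\ref{C-Pairs}, an integer $n$ with $s(n)>2$ satisfies $p(n)>p$ exactly when $a(n)>\be p\, g_{t(n)-1}$, i.e. exactly when $a(n)>a^-_{b(n),t(n)}$. Meanwhile, Theorem~\ref{T-GMain} guarantees that every such $n$ is written uniquely as $n=a(n)g_{t(n)}+\be\, b(n)\,g_{t(n)-1}$ with $1\le b(n)\le g_{t(n)}$, with $a(n)\le (\be-1)g_{t(n)+1}+\al b(n)=a^+_{b(n),t(n)}$, and with the pair $(a(n),b(n))$ being $t(n)$-divisible. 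So the three conditions ``$a^-_{b,t}<a\le a^+_{b,t}$ and $(a,b)$ is $t$-divisible'' are exactly the conditions characterizing which $(a,b,t)$ arise as $(a(n),b(n),t(n))$ for some $n\in S_p$.

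First I would fix $t\ge 2$ and $1\le b\le g_t$ and argue both inclusions in \eqref{E-Int}. For the forward inclusion: take $m\in S_p$ with $b(m)=b$, $t(m)=t$. Since $p(m)>p\ge 0$ forces $s(m)>2$ (as $p(n)=1$ whenever $s(n)=2$, a pair like $(x,n)$ always working), Theorem~\ref{T-GMain} applies, so $m=a(m)g_t+\be b g_{t-1}$, the pair $(a(m),b)$ is $t$-divisible, and $a(m)\le a^+_{b,t}$; and Corollary~\ref{C-Pairs} with the fact that $p(m)$ is the largest integer making $a(m)>(p(m)-1)\be g_{t-1}$ true gives $a(m)>\be p g_{t-1}=a^-_{b,t}$. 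Hence $m$ lies in the right-hand set with $a=a(m)$. For the reverse inclusion: suppose $a$ satisfies $a^-_{b,t}<a\le a^+_{b,t}$ and $(a,b)$ is $t$-divisible, and set $m=ag_t+\be b g_{t-1}$. I need to check that $m\in S_p$ with $b(m)=b$ and $t(m)=t$. Here I would invoke the uniqueness half of Theorem~\ref{T-GMain}: the triple $(a,b,t)$ satisfies $t\ge 2$, $b\le g_t$, $a\le(\be-1)g_{t+1}+\al b$, and the $t$-divisibility condition, which are precisely the hypotheses pinning down $a(m)=a$, $b(m)=b$, $t(m)=t$ (noting $s(m)=t+1>2$). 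Then Corollary~\ref{C-Pairs} gives $p(m)>p$ since $a(m)=a>\be p g_{t-1}$, so $m\in S_p$.

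The one subtlety worth spelling out is why a triple meeting the interval conditions actually corresponds to some integer $m$ with $s(m)>2$ in the first place — that is, that the uniqueness statement of Theorem~\ref{T-GMain} can be run ``in reverse''. The theorem as stated says: given $n$ with $s(n)>2$ there is a unique such triple; and conversely (this is the content of the uniqueness argument in the proof of Theorem~\ref{T-GMain}) any triple $(a,b,t)$ meeting all the listed conditions has $s(ag_t+\be b g_{t-1})=t+1$ and recovers $(a(m),b(m),t(m))=(a,b,t)$. So I would cite that portion of the proof of Theorem~\ref{T-GMain}. Since $a^-_{b,t}=\be p g_{t-1}\ge 0$, any $a$ with $a>a^-_{b,t}$ is automatically positive, and $a^+_{b,t}=(\be-1)g_{t+1}+\al b$ is exactly the bound appearing in the theorem, so there is nothing to reconcile between the two descriptions.

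The main obstacle — really the only thing beyond bookkeeping — is making sure the correspondence is genuinely a bijection on the relevant triples, i.e. that no $m$ gets counted under two different $(a,b,t)$ and that the interval conditions are neither too loose (admitting a triple with no valid $m$) nor too tight (excluding some $m\in S_p$). Both directions reduce to the uniqueness clause of Theorem~\ref{T-GMain} together with Corollary~\ref{C-Pairs}, so once those are quoted correctly the proof is a short verification; I would keep it to a paragraph.
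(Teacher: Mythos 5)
Your proof takes essentially the same route as the paper's: fix $t$ and $b$, and show both inclusions of~\eqref{E-Int} by invoking Theorem~\ref{T-GMain} together with Corollary~\ref{C-Pairs}. You are a bit more explicit about which part of Theorem~\ref{T-GMain} is doing the work in the direction $R\sub L$ (the uniqueness clause, run in reverse to recover $a(m)=a$, $b(m)=b$, $t(m)=t$ from a $t$-divisible triple satisfying the bounds), which is a fair point since the paper's one-line citation glosses over this.

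One incorrect side remark worth flagging: you claim that $p(n)=1$ whenever $s(n)=2$, and hence that $p(m)>p\ge 0$ forces $s(m)>2$. Neither is right — when $s(n)=2$ the pair $(x,n)$ is $n$-good for every $x\ge 1$ (as the paper notes in the introduction), so $p(n)$ is infinite and $p(m)>p$ does not by itself rule out $s(m)=2$. This doesn't damage your argument, because the interval-set condition already restricts attention to $m$ with $t(m)=t\ge 2$, which gives $s(m)=t+1>2$ directly; just cite that hypothesis instead.
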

\begin{proof}
	Fix $t\ge 2$ and $1\le b\le g_t$.  With $S=S_p$, let $L$ denote the set defined on the left-hand set of \eqref{E-Int} and let $R$ be the right-hand set.  We wish to show that $L=R$.
	
	If $m=ag_t+\be bg_{t-1}$ is an element of $R$, then by Theorem~\ref{T-GMain} we have $a(m)=a$, $b(m)=b$, and $t(m)=t$.  Because $a(m)>\be p g_{t-1}$, we have $n\in S_p$ by Corollary~\ref{C-Pairs}, and hence $m\in L$. Conversely, if $m\in S_p$ with $t(m)=t$ and $b(m)=b$, then by Corollary~\ref{C-Pairs} and Theorem~\ref{T-GMain} we have $a^-_{b,t}<a(m)\le a_{b,t}^+$, $n=a(m)g_t+\be bg_{t-1}$, and that $(a(m),b)$ is $t$-divisible.  Thus $m\in R$ and we conclude the result.
\end{proof}

\iffalse

Below we list some examples of interval sets.
\begin{itemize}
	\item The set of $n$ with $a(n)\ge 1+\be p g_{t-1}$ for any $p\ge 1$ (by taking $a^-_{b,t}=\be p g_{t-1}$ and $a^+_{b,t}=(\be-1)g_{t+1}+\al b$). Note that Corollary~\ref{C-Pairs} implies that this is precisely the set of $n$ with $p(n)>p$.
	\item The set of $n$ with $t(n)$ even (by taking $a^-_{b,t}=0,\ a^+_{b,2t}=(\be-1)g_{2t+1}+\al b$, and $a^+_{b,2t+1}=0$).
	\iffalse\item The set of $n$ with $-\del \gam^t<\gam b-a\le \del \gam^t$ for some real number $\del$ (by taking $a^-_{b,t}=\max\{0,\gam b-\del \gam^t\}$ and $a^+_{b,t}=\min\{(\be-1)g_{t+1}+\al b,\gam b+\del \gam^t\}$).  By Theorem~\ref{T-GMain} and the fact that $\gam=|\be/\lam|$, this set consists of $n$ which have $w_{s+1}(b,a)$ within $\del\be^t$ of its ``expected value'' of $\gam n$.\fi
\end{itemize}
We note that these densities for $\al=\be=1$ have been determined in \cite{CGS}. \fi  

We now wish to refine our interval sets to deal more precisely with the divisibility condition.  For $0\le q\le \be-1$ and $t\ge 2$, we will say that a pair $(a,b)$ is $(q,t)$-good if \[1\le b\le g_t,\] \[\max\{0,(q-1)g_{t+1}+\al b\}< a\le q g_{t+1}+\al b.\]
Given a set of positive integers $S$, we define \[S(n,q,t)=\{m\in S:m\le n\ t(m)=t,\ (a(m),b(m))\tr{ is }(q,t)\tr{-good}\}.\]  Note that by Theorem~\ref{T-GMain}, every $m\in S$ with $m\le n$ lies in exactly one $S(n,q,t)$ set.

\begin{lem}\label{L-Snq}
	If $S$ is an interval set with respect to $a_{b,t}^-$ and $a_{b,t}^+$, then $S(n,q,t)$ is an interval set with respect to $\hat{a}_{b,t}^-$ and $\hat{a}_{b,t}^+$, where 
	\begin{align*}\hat{a}_{b,t}^-&=\max\{0,a_{b,t}^-,(q-1)g_{t+1}+\al b\},\ \\ \hat{a}_{b,t}^+&=\min\{a_{b,t}^+,q g_{t+1}+\al b, (n-\be bg_{t-1})g_t^{-1}\}.\end{align*}
\end{lem}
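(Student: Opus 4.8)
The plan is to verify the defining identity \eqref{E-Int} for $S(n,q,t)$ directly, fixing $t$ (the index occurring in $S(n,q,t)$) and an integer $b$ with $1\le b\le g_t$. For any other index value $t'\ne t$ there is nothing to check: $S(n,q,t)$ contains no $m$ with $t(m)=t'$, so the left side of \eqref{E-Int} is empty there, and one simply sets $\hat a_{b,t'}^-=\hat a_{b,t'}^+$ to make the right side empty as well, which is consistent with the stated formulas (these only constrain $\hat a_{b,t}^\pm$ for the relevant $t$). The tool I would invoke repeatedly is Theorem~\ref{T-GMain}: every $m\in S(n,q,t)$ satisfies $m=a(m)g_t+\be b(m)g_{t-1}$ with $a(m)\ge 1$ and $(a(m),b(m))$ a $t$-divisible pair, and the triple $(a(m),b(m),t(m))$ is determined by $m$.

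For the forward inclusion I would take $m\in S(n,q,t)$ with $b(m)=b$, write $a=a(m)$ so that $m=ag_t+\be bg_{t-1}$ and $(a,b)$ is $t$-divisible, and check $\hat a_{b,t}^-<a\le\hat a_{b,t}^+$. This is exactly the conjunction of four elementary bounds, one per term of the two extrema: $a\ge 1$ gives $a>0$; $m\in S$ and the interval-set hypothesis give $a_{b,t}^-<a\le a_{b,t}^+$; $(q,t)$-goodness of $(a,b)$ gives $(q-1)g_{t+1}+\al b<a\le qg_{t+1}+\al b$; and $m\le n$ rewrites as $a\le(n-\be bg_{t-1})g_t^{-1}$. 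Taking the largest of the lower bounds and the smallest of the upper bounds gives precisely $\hat a_{b,t}^-<a\le\hat a_{b,t}^+$, so $m$ lies in the right side of \eqref{E-Int}.

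For the reverse inclusion I would begin with an integer $a$ satisfying $\hat a_{b,t}^-<a\le\hat a_{b,t}^+$ and with $(a,b)$ $t$-divisible, and set $m=ag_t+\be bg_{t-1}$; since $\hat a_{b,t}^-\ge 0$ this forces $a\ge 1$, so $m$ is a positive integer. From $a_{b,t}^-<a\le a_{b,t}^+$ and $t$-divisibility, the interval-set property of $S$ puts $m$ in $S$ with $b(m)=b$ and $t(m)=t$, hence $a(m)=a$ because $b(m)=b$. The inequalities $(q-1)g_{t+1}+\al b<a\le qg_{t+1}+\al b$ together with $a\ge 1$ say that $(a(m),b(m))$ is $(q,t)$-good, and $a\le(n-\be bg_{t-1})g_t^{-1}$ gives $m\le n$, so $m\in S(n,q,t)$. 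The two inclusions together give \eqref{E-Int}.

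I do not anticipate a genuine difficulty here: the statement is a bookkeeping exercise that intersects the interval defining $S$ with the interval built into $(q,t)$-goodness and with the truncation $m\le n$. The only points that need attention are matching each membership requirement of $S(n,q,t)$ to exactly one of the listed bounds and back again (which is where the interval-set hypothesis on $S$ and the uniqueness part of Theorem~\ref{T-GMain} do the work) and observing that the ``$\max$ with $0$'' in $\hat a_{b,t}^-$ merely encodes $a\ge 1$ and is otherwise inert.
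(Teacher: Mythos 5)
Your proof is correct and takes essentially the same route as the paper's: fix $t$ and $b$, verify the two inclusions of \eqref{E-Int} by pairing each hypothesis in the definition of $S(n,q,t)$ (membership in $S$, $(q,t)$-goodness, the cutoff $m\le n$) with the corresponding term of the $\max$/$\min$, and use Theorem~\ref{T-GMain} together with the interval-set hypothesis on $S$ for the identification $a(m)=a$, $b(m)=b$, $t(m)=t$. Your explicit remark that $\hat a_{b,t'}^\pm$ for $t'\ne t$ can be taken to produce an empty right-hand side is a small tidiness the paper leaves implicit, but it does not change the substance of the argument.
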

We note that $\hat{a}_{b,t}^-$ and $\hat{a}_{b,t}^+$ technically depend on $n$ and $q$, but we suppress this notation whenever $n$ and $q$ are understood.
\begin{proof}
	Fix $t\ge 2$ and $1\le b\le g_t$.  Let $L$ denote the set defined on the left-hand side of \eqref{E-Int}, with us now using $S(n,q,t)$ instead of $S$, and let $R$ be the set on the right-hand side.  We wish to show that $L=R$.
	
	For any $m\in L$, we have that $a_{b,t}^-<a(m)\le a_{b,t}^+$ since $m\in S$, that $\max\{0,(q-1)g_{t+1}+\al b\}<a(m)\le q g_{t+1}+\al b$ since $(a(m),b)$ is $(q,t)$-good, and that $a(m)g_t+\be bg_{t-1}\le n$ since $m\le n$.  Thus $m\in R$.  Conversely, if $m\in R$ then $b(m)=b$ and $t(m)=t$.  Because $S$ is an interval set and $a_{b,t}^-<a(n)\le a_{b,t}^+$, we have $m\in S$, and the other inequalities imply that $m\in S(n,q,t)$, so $m\in L$ and we conclude the result.
\end{proof}

When $\hat{a}_{b,t}^-$ and $\hat{a}_{b,t}^+$ are as in Lemma~\ref{L-Snq} and clear from context, we define \[T(n,q,t)=\{(a,b)\in \Z^2:\hat{a}_{b,t}^-<a\le \hat{a}_{b,t}^+,\ 1\le b\le g_t\}.\] 
Intuitively, $T(n,q,t)$ is obtained by taking $S(n,q,t)$, looking at $m$-good pairs instead of the numbers $m\in S(n,q,t)$, and then forgetting about the divisibility condition.  More precisely, we have the following.
%Note in particular that every pair of $T(n,q,t)$ is $(q,t)$-good. 

\begin{lem}\label{L-Den}
	Let $S$ be an interval set with respect to $a_{b,t}^-$ and $a_{b,t}^+$.  Then for all $0\le q\le \be-1$ and $t\ge 2$, we have \[\l||S(n,q,t)|-\f{\be-q}{\be}|T(n,q,t)|\r|\le \be \min\{g_{t},n/g_{t-1}\}.\]
\end{lem}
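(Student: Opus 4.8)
The plan is to handle the sum over $b$ termwise and to show that, inside $T(n,q,t)$, the $t$-divisibility condition depends only on the residue of $a$ modulo $\be$. By Lemma~\ref{L-Snq}, $S(n,q,t)$ is an interval set with respect to $\hat{a}_{b,t}^-$ and $\hat{a}_{b,t}^+$, so writing $I_b=(\hat{a}_{b,t}^-,\hat{a}_{b,t}^+]$ and using \eqref{E-Int} together with the fact that every $m\in S(n,q,t)$ has $t(m)=t$ by definition and $1\le b(m)\le g_t$ by Theorem~\ref{T-GMain}, we obtain $|S(n,q,t)|=\sum_{b=1}^{g_t}D_b$ and $|T(n,q,t)|=\sum_{b=1}^{g_t}N_b$, where $N_b$ is the number of integers in $I_b$ and $D_b$ is the number of those integers $a$ for which $(a,b)$ is $t$-divisible. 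It then suffices to bound $\l|D_b-\f{\be-q}{\be}N_b\r|$ for each $b$ and to sum.

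First I would pin down the range of $a-\al b$ for integers $a\in I_b$. Since $\hat{a}_{b,t}^-\ge (q-1)g_{t+1}+\al b$ and $\hat{a}_{b,t}^+\le qg_{t+1}+\al b$, any such $a$ satisfies $(q-1)g_{t+1}<a-\al b\le qg_{t+1}$. For such an $a$, the number $a-\al b-\ell g_{t+1}$ is $\le 0$ as soon as $\ell\ge q$ and is strictly positive for $0\le\ell\le q-1$; hence $(a,b)$ is $t$-divisible exactly when $a\not\equiv \al b+\ell g_{t+1}\pmod{\be}$ for every $\ell\in\{0,\dots,q-1\}$. Because $\gcd(g_{t+1},\be)=1$ by Lemma~\ref{L-FibLike}(b) and $q\le\be-1$, these $q$ residues are distinct, so the $t$-divisible integers in $I_b$ are precisely those whose residue modulo $\be$ lies in a fixed set of $\be-q$ classes. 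When $q=0$ there is no constraint at all (then $a-\al b\le 0$), so $D_b=N_b$ and $\f{\be-q}{\be}=1$, making the term $0$.

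Next I would invoke the elementary estimate that for $y\ge x$ the number of integers in $(x,y]$ lying in a fixed residue class modulo $\be$ differs from $(y-x)/\be$ by less than $1$; this gives $|N_b-L_b|<1$ with $L_b:=\hat{a}_{b,t}^+-\hat{a}_{b,t}^-$, and summing over the $\be-q$ permitted classes, $\l|D_b-\f{\be-q}{\be}L_b\r|<\be-q$ (when $L_b<0$ both counts vanish and the term is $0$). Combining these with the triangle inequality gives $\l|D_b-\f{\be-q}{\be}N_b\r|<(\be-q)+\f{\be-q}{\be}\le\be-\f{1}{\be}<\be$ for $q\ge 1$, and $=0$ for $q=0$; in all cases the $b$-term is less than $\be$.

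Finally I would count the contributing $b$. A term can be nonzero only if $I_b$ contains an integer $a$, and then $1\le a\le\hat{a}_{b,t}^+\le (n-\be bg_{t-1})/g_t$, which forces $b<n/(\be g_{t-1})$; of course also $b\le g_t$. Hence at most $\min\{g_t,n/(\be g_{t-1})\}$ values of $b$ contribute, each by less than $\be$, and therefore
\[
\l||S(n,q,t)|-\f{\be-q}{\be}|T(n,q,t)|\r|\le\sum_{b=1}^{g_t}\l|D_b-\f{\be-q}{\be}N_b\r|\le\be\min\{g_t,n/(\be g_{t-1})\}\le\be\min\{g_t,n/g_{t-1}\}.
\]
The one genuinely substantive step is the congruence reduction in the second paragraph — recognizing that within $T(n,q,t)$ the divisibility condition forbids exactly $q$ residues of $a$ modulo $\be$; once that is in hand, everything reduces to counting lattice points in intervals and residue classes.
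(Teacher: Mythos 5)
Your proof is correct and follows essentially the same route as the paper: both reduce to a per-$b$ count of $t$-divisible $a$ in an interval, observe that the divisibility condition on $T(n,q,t)$ forbids exactly $q$ residues of $a$ modulo $\be$ (distinct because $\gcd(g_{t+1},\be)=1$), and sum the per-$b$ error of less than $\be$ over the at most $\min\{g_t,n/g_{t-1}\}$ contributing values of $b$. The only stylistic difference is that you bound each $b$-term via the standard estimate for integers in a residue class within an interval, whereas the paper groups $T_b(n,q,t)$ into blocks of $\be$ consecutive $a$-values; these yield the same bound.
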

\begin{proof}
	Let $T^*(n,q,t)\sub T(n,q,t)$ denote the subset of pairs which are $t$-divisible.  We claim that $|S(n,q,t)|=|T^*(n,q,t)|$.  Indeed, consider the map sending $m\in S(n,q,t)$ to $(a(m),b(m))$.  Because $S(n,q,t)$ is interval with respect to $\hat{a}_{b,t}^-$ and $\hat{a}_{b,t}^+$, this pair is in $T(n,q,t)$.  By Theorem~\ref{T-GMain} this pair is also $t$-divisible, so $(a(m),b(m))\in T^*(n,q,t)$.  Conversely, each $(a,b)\in T^*(n,q,t)$ can only be mapped to by $m=ag_t+\be bg_{t-1}$, which is in $S(n,q,t)$ by the interval condition. Thus this map defines a bijection between these two sets, proving the claim.
	
	We now wish to estimate $|T^*(n,q,t)|$.  To this end, let $1\le b\le g_t$ be fixed.  Let $T_b(n,q,t)=\{(a',b')\in T(n,q,t):b'=b\}$ and $T^*_b(n,q,t)\sub T_b(n,q,t)$ be the set of pairs which are $t$-divisible. Define \[R_b=\{-(\al b+\ell g_{t+1})\tr{ mod } \be:0\le \ell< q\}.\]  Observe that $R_b$ consists of $q$ distinct values since $q<\be$ and $\gcd(g_{t+1},\be)=1$ by Lemma~\ref{L-FibLike}.
	
	By construction, every element of $T(n,q,t)$ is $(q,t)$-good. This implies that any $(a,b)\in T_b(n,q,t)$ has $a-\al b-\ell g_{t+1}\le 0$ if $\ell\ge q$, and otherwise this quantity is positive.  Thus $(a,b)$ will be in $T^*_b(n,q,t)$ if and only if $a$ modulo $\be$ is any of the $\be-q$ values not in $R_b$.  We claim that roughly a $\f{\be-q}{\be}$ fraction of pairs in $T_b(n,q,t)$ will satisfy this condition. More precisely, we claim that
	\[
	\floor{\f{\be-q}{\be}|T_b(n,q,t)|}\le |T^*_b(n,q,t)|\le \floor{\f{\be-q}{\be}|T_b(n,q,t)|}+\be-1.
	\]
	Indeed, one can break up the elements of $T_b(n,q,t)$ into $\floor{\rec{\be} |T_b(n,q,t)|}$ disjoint sets of the form $\{(a,b),(a+1,b),\ldots,(a+\be-1,b)\}$, together with one remaining set of size at most $\be-1$.  Each of these disjoint sets of size $\be$ will contain exactly $\be-q$ pairs $(a,b)$ with $a\mod \be$ not in $R_b$, and the remaining set could have anywhere from 0 to $\be-1$ elements with this property.  This proves the claim.
	
	Since we only consider $(a,b)$ that are $(q,t)$-good, we only consider $b$ with $1\le b\le g_t$.  Since we also require $ag_t+\be b g_{t-1}\le n$, we also have $b\le n/g_{t-1}$.  Thus by summing the above bound over all $1\le b\le \min\{g_t,n/g_{t-1}\}$, we find
	{\footnotesize \[
	\f{\be-q}{\be}|T(n,q,t)|-\min\{g_t,n/g_{t-1}\}\le |T^*(n,q,t)|\le \f{\be-q}{\be}|T(n,q,t)|+(\be-1)\min\{g_t,n/g_{t-1}\}.
	\]}
	By using $1,\be-1\le \be$, we conclude the result.
\end{proof}

The $T(n,q,t)$ sets are relatively easy to work with since they have no divisibility conditions.  In particular, the cardinality of these sets is exactly $\max\{0,a_{b,t}^+-a_{b,t}^-\}\cdot g_t$.  One often needs to further refine $T(n,q,t)$ by replacing its bounds with asymptotic estimates, and we will see an example of this in the proof of Theorem~\ref{T-Den}.  For now we record a general result that can be used in computing densities of interval sets.

\begin{cor}\label{C-Den}
	If $S$ is an interval set, then
	\[
	\l||S\cap [n]|-\sum_{q,t} \f{\be-q}{\be} |T(n,q,t)|\r|=O(\sqrt{n}).
	\]
\end{cor}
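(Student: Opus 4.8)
The plan is to sum the per-$(q,t)$ estimate of Lemma~\ref{L-Den} and observe that the resulting sum of geometrically growing-then-decaying terms is controlled by its largest term, which has size about $\sqrt n$. Since every $m\in S$ with $m\le n$ lies in exactly one set $S(n,q,t)$, we have the exact identity $|S\cap[n]|=\sum_{q,t}|S(n,q,t)|$, the sum running over $0\le q\le\be-1$ and $t\ge 2$. By the triangle inequality and Lemma~\ref{L-Den}, and since the summand $\be\min\{g_t,n/g_{t-1}\}$ does not depend on $q$ while there are $\be$ admissible values of $q$,
\[
\l||S\cap[n]|-\sum_{q,t}\f{\be-q}{\be}|T(n,q,t)|\r|\le\sum_{q,t}\be\min\{g_t,n/g_{t-1}\}=\be^2\sum_{t\ge 2}\min\{g_t,n/g_{t-1}\},
\]
so it suffices to prove $\sum_{t\ge 2}\min\{g_t,n/g_{t-1}\}=O(\sqrt n)$, with the implied constant allowed to depend on the fixed parameters $\al,\be$.

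We may assume $n$ is large, the small cases being trivial since the whole quantity is $O(n)$. First I would note that the sum is effectively finite: once $g_t\ge n$ the bound $\hat{a}_{b,t}^+\le (n-\be bg_{t-1})/g_t<1$ forces $T(n,q,t)=\emptyset$ (and likewise $S(n,q,t)=\emptyset$), and by Proposition~\ref{P-Bound}, or directly from the closed form of Lemma~\ref{L-FibLike}(d), this happens once $t\ge\log_\gam n+O(1)$, so only $O(\log n)$ terms are nonzero. To estimate the sum I would split at the crossover index $t_0:=\max\{t:g_t\le\sqrt n\}$. Since $\al,\be\ge 1$, the recurrence gives $g_{t+1}\ge g_t+g_{t-1}$, so the $g_t$ at least double every two steps; consequently $\sum_{t=2}^{t_0}g_t=O(g_{t_0})=O(\sqrt n)$, which handles the range $t\le t_0$ via $\min\{g_t,n/g_{t-1}\}\le g_t$. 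For $t>t_0$ I would instead use $\min\{g_t,n/g_{t-1}\}\le n/g_{t-1}$; because $g_{t-1}$ grows at least geometrically, $\sum_{t>t_0}1/g_{t-1}=O(1/g_{t_0-1})$, and the maximality of $t_0$ gives $g_{t_0-1}\ge g_{t_0+1}/(\al+\be)^2>\sqrt n/(\al+\be)^2$, so this range contributes $O(n/\sqrt n)=O(\sqrt n)$ as well. Adding the two pieces gives the claim.

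I do not expect a serious obstacle here. The only point requiring a little care is the uniformity of the geometric-sum bounds $\sum_{t\le T}g_t=O(g_T)$ and $\sum_{t\ge T}1/g_{t-1}=O(1/g_{T-1})$; both follow from $g_{t+1}\ge g_t+g_{t-1}$ (so $g_{t+2}\ge 2g_t$), or if one prefers from $g_t=(\gam^t-\lam^t)/(\gam-\lam)$ together with $\gam>1$ and $|\lam|/\gam<1$. The essential content is simply that $\sum_t\min\{g_t,n/g_{t-1}\}$, having $O(\log n)$ terms and being dominated by whichever term lies nearest the crossover $g_t\approx\sqrt n$, is $O(\sqrt n)$.
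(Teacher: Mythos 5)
Your proof is correct and follows essentially the same route as the paper's: apply Lemma~\ref{L-Den} term-by-term, then bound $\sum_t\min\{g_t,n/g_{t-1}\}$ by the geometric-sum observation that the terms peak near the crossover $g_t\approx\sqrt n$. One small point you elide: the sum $\sum_{q,t}$ runs only over $t\ge 2$, so the asserted exact identity $|S\cap[n]|=\sum_{q,t}|S(n,q,t)|$ misses the (finitely many) $m\in S\cap[n]$ with $s(m)=2$, i.e.\ $t(m)=1$, to which Theorem~\ref{T-GMain} does not apply. The paper's proof flags exactly this and uses Lemma~\ref{L-Chicken} to show there are at most $\al\be$ such $m$, an $O(1)$ correction absorbed into $O(\sqrt n)$; you should add a sentence to that effect, although in fairness the paper's own remark just before Lemma~\ref{L-Den} makes the same loose claim that you repeated.
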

\begin{proof}
	The sum $\sum_{t,q} |S(n,q,t)|$ counts every element of $|S|\cap[n]$ exactly once, except for those $m\in S$ with $t(m)=1$.  By Lemma~\ref{L-Chicken} there are at most $\al\be$ such exceptions.  Thus by Lemma~\ref{L-Den}, the triangle inequality, and the fact that $g_t=\Theta(\gam^t)$, we find that
	\begin{align*}
	\l||S\cap[n]|-\sum_{q,t} \f{\be-q}{\be} |T(n,q,t)|\r|&\le 
	\al\be+\sum_{q,t} \l||S(n,q,t)|-\f{\be-q}{\be}|T(n,q,t)|\r|\\ &\le O(1)+\sum_{t:g_tg_{t-1}\le n} O(\gam^t)+\sum_{t:g_tg_{t-1}\ge n} O(n\gam^{-t})\\ &=O(\sqrt{n}),
	\end{align*}
	where we used that the largest terms of the geometric sums are $O(\sqrt{n})$.
\end{proof}
\subsection{Proving Theorem~\ref{T-Den}}
Recall that we define $n_{c,r}:=\floor{\f{c\be}{(\gam-\lam)^2} \gam^{2r+1}}$.  Before proving Theorem~\ref{T-Den}, we first show that most $n$ with $p(n)$ large and $n\le n_{c,r}$ have $t(n)\le r$, and we also establish a more precise cutoff for when this occurs.

\begin{lem}\label{L-Pairs}
	Let $r\ge 2$.  For any $p\ge \be$, there are at most $O(\be^r)$ many $n$ with $p(n)>p$, $n\le \f{\be (p-\be+1) }{\al(\gam-\lam)^2}\gam^{2r+2}$, and $t(n)>r$.  Further, all $n$ with $p(n)>\be$ and $n\le \be g_rg_{r+1}$ satisfy $t(n)\le r$.
\end{lem}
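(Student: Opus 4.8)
The second assertion is immediate, so I would dispose of it first: if $t(n)=t>r$, then since $p(n)>\be\ge 1$ we have $p(n)\ge 2$, so Corollary~\ref{C-Pairs} (applied with $p=1$) gives $a(n)>\be g_{t-1}$. Hence
\[
n=a(n)g_t+\be b(n)g_{t-1}\ge a(n)g_t>\be g_{t-1}g_t\ge \be g_r g_{r+1},
\]
since $g_{t-1}\ge g_r$ and $g_t\ge g_{r+1}$. The contrapositive is the claim.

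For the first assertion, the crux — and what I expect to be the main obstacle — is a sharp lower bound on $n$ in terms of $t(n)$ and $p$: namely, if $t(n)=t$ and $p(n)>p$, then
\[
n>\f{\be(p-\be+1)}{\al}\,g_{t-1}g_{t+1}.
\]
I would prove this by combining $a(n)>\be p g_{t-1}$ (Corollary~\ref{C-Pairs}) with the bound $a(n)\le (\be-1)g_{t+1}+\al b(n)$ of Theorem~\ref{T-GMain}: solving the latter for $b(n)$, substituting into $n=a(n)g_t+\be b(n)g_{t-1}$, and collecting terms using $g_{t+1}=\al g_t+\be g_{t-1}$ gives $n\ge \f{g_{t+1}}{\al}\bigl(a(n)-\be(\be-1)g_{t-1}\bigr)$, after which $a(n)>\be p g_{t-1}$ finishes it. The point of this inequality is that the threshold $N:=\f{\be(p-\be+1)}{\al(\gam-\lam)^2}\gam^{2r+2}$ of the lemma is, up to a genuinely negligible correction, just $\f{\be(p-\be+1)}{\al}g_r g_{r+2}$, i.e.\ essentially the largest value that an $n$ with $t(n)=r+1$ and $p(n)>p$ could take.

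Granting the lower bound, the rest is bookkeeping. First I would rule out $t(n)\ge r+2$: since $(\gam-\lam)g_k=\gam^k-\lam^k>\al\gam^{k-1}$ for every $k\ge 1$ (check the two parities of $k$ separately), one gets $g_{t-1}g_{t+1}>\f{\al^2}{(\gam-\lam)^2}\gam^{2t-2}\ge \f{\gam^{2r+2}}{(\gam-\lam)^2}$ whenever $t\ge r+2$, so the lower bound already exceeds $N$ and there are no such $n\le N$ at all. It then remains to count $n\le N$ with $t(n)=r+1$ and $p(n)>p$; by the lower bound these all lie in $\bigl(\f{\be(p-\be+1)}{\al}g_r g_{r+2},\,N\bigr]$, so there are at most $N-\f{\be(p-\be+1)}{\al}g_r g_{r+2}+1$ of them. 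Using $g_r g_{r+2}=g_{r+1}^2-(-\be)^r$ (Lemma~\ref{L-FibLike}(c)), the closed form $g_{r+1}=\f{\gam^{r+1}-\lam^{r+1}}{\gam-\lam}$ (Lemma~\ref{L-FibLike}(d)), and $(\gam-\lam)^2=\al^2+4\be$, a short computation gives
\[
N-\f{\be(p-\be+1)}{\al}g_r g_{r+2}=\f{\be(p-\be+1)}{\al(\gam-\lam)^2}\bigl((-\be)^r(\al^2+2\be)-\lam^{2r+2}\bigr),
\]
whose absolute value is $O(\be^r)$ since $|\lam|^{2r+2}=(\lam^2)^{r+1}<\be^{r+1}$ and $\al,\be,p$ are fixed. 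Thus there are $O(\be^r)$ such $n$, and combining with the previous case completes the proof.

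To summarize: the entire difficulty is concentrated in the displayed lower bound of the second paragraph, and everything afterward is a routine manipulation of the closed forms for $g_k$ together with the identities relating $\gam$, $\lam$, $\al$, $\be$.
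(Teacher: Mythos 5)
Your proof is correct and follows essentially the same approach as the paper: the heart of the argument is the same lower bound $n>\f{\be(p-\be+1)}{\al}g_{t-1}g_{t+1}$, obtained by combining $a(n)>\be p g_{t-1}$ with $a(n)\le(\be-1)g_{t+1}+\al b(n)$, followed by comparing $\f{\be(p-\be+1)}{\al}g_rg_{r+2}$ against $N$ via the closed form for $g_k$. The only cosmetic differences are that you dispatch the second assertion directly from Corollary~\ref{C-Pairs} with $p=1$ (whereas the paper deduces it from the same lower bound using $p\ge\be$), and you rule out $t\ge r+2$ explicitly rather than leaving it implicit in the monotonicity of $g_{t-1}g_{t+1}$.
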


\begin{proof}
	Fix some $n$ with $p(n)>p\ge \be$, and let $a=a(n),\ b=b(n)$, and $t=t(n)$.  By Theorem~\ref{T-GMain} and Corollary~\ref{C-Pairs} we have \[\be p g_{t-1}< a\le (\be-1)g_{t+1}+\al b.\]  Using these lower bounds on $a$ and $b$, we find \begin{align*}n&=ag_{t}+\be bg_{t-1}\nonumber\\ &> \be p g_{t-1}g_{t}+\l(\f{\be p}{\al}\be g_{t-1}-\f{\be-1}{\al}\be g_{t+1}\r)g_{t-1}\\ &=\f{\be p}{\al}g_{t-1}(\al g_t+\be g_{t-1})-\f{(\be-1)\be}{\al}g_{t-1}g_{t+1}\\&=\f{\be p}{\al} g_{t-1}g_{t+1}-\f{(\be-1)\be}{\al}g_{t-1}g_{t+1}=\f{\be (p-\be+1)}{\al} g_{t-1}g_{t+1} .\end{align*} 
	Now assume $t>r$, so that we conclude $n>\f{\be(p-\be+1)}{\al} g_rg_{r+2}$.  Our first result follows by observing that this implies that $n$ is at least \[\f{\be (p-\be+1)}{\al} g_rg_{r+2}=\f{\be(p-\be+1) }{\al(\gam-\lam)^2}\gam^{2r+2}+O(\be^r).\]  For the second result, note that $g_{r+2}=\al g_{r+1}+\be g_{r}\ge  \al g_{r+1}$ since $r\ge 2$, and since $p\ge\be$ we conclude that $n>\be g_rg_{r+1}$.
\end{proof}

\begin{proof}[Proof of Theorem~\ref{T-Den}]
	Define $n_r=\be g_r g_{r+1}$.  By Lemma~\ref{L-Chicken}, the only elements in $S_p\cap [n_{c,r}]$ that are not in $S_p\cap [n_r]$ are $n$ with $n\le n_{c,r}$, $p(n)>p\ge \be$, and $t(n)>r$.  By Lemma~\ref{L-Pairs} there are at most $O(\be^r)$ such $n$.  The number of elements in $S_p\cap [n_r]$ that are not in $S_p\cap [n_{c,r}]$ is at most $n_r-n_{c,r}\le n_r-n_{1,r}=O(\be^r)$.  In total we conclude \begin{equation} \label{E-Red}||S_p\cap[n_r]|-|S_p\cap[n_{c,r}]||=O(\be^r),\end{equation} so it will be enough for us to estimate the size of $S_p\cap [n_r]$.
	
	By Lemma~\ref{L-Sp}, $S_p$ is an interval set with respect to $a_{b,t}^-=\be p g_{t-1}$ and $a_{b,t}^+=(\be-1)g_{t+1}+\al b$.   With $S=S_p$, let $S(n_r,q,t)$ and $T(n_r,q,t)$ be the sets as defined before Lemmas~\ref{L-Snq} and \ref{L-Den}.  Our goal at this point is, for each $q$ and $t$, to estimate the sizes of either $S(n_r,q,t)$ or $T(n_r,q,t)$.
	
	By Lemma~\ref{L-Pairs}, there exists no $m\in S_p\cap[n_r]$ with $t(m)>r$.  Thus $|S(n_r,q,t)|=0$ for $t>r$.  Note that for all $t\le r$, $q\le \be-1$, and $b\le g_t$; we have $qg_tg_{t+1}+bg_{t+1}\le \be g_rg_{r+1}=n_r$.  This inequality is equivalent to $qg_{t+1}+\al b\le (n_r-\be bg_{t-1})g_t^{-1}$.  Thus for $t\le r$ we have $\hat{a}_{b,t}^+=qg_{t+1}+\al b$, and we recall that $\hat{a}_{b,t}^-=\max\{0,\be p g_{t-1},(q-1)g_{t+1}+\al b,\}$.
	
	Recall that $d$ is the smallest integer such that $\del:=\be\gam^{-1}p-\gam d\le \al$.   We first establish the range that $d$ and $\del$ can lie in.
	\begin{claim}\label{Cl-d}
		We have $0\le d\le \be-1$ and $\al-\gam<\del\le \al$.
	\end{claim}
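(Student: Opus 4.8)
The plan is to unwind the definitions of $d$ and $\del$ and argue entirely with the elementary identities relating $\gam,\lam,\al,\be$; no use of the characterization theorem is needed beyond the standing hypothesis $\be\le p\le\ceil{\gam^2}-2$ of Theorem~\ref{T-Den}. First note that $\del=\be\gam^{-1}p-\gam d$ is strictly decreasing in $d$ and tends to $-\infty$, so a smallest integer $d$ with $\del\le\al$ exists; that $d$ gives $\del\le\al$ by construction, which is the upper bound on $\del$. For the matching lower bound, suppose $\del\le\al-\gam$; then replacing $d$ by $d-1$ raises $\del$ by $\gam$ while keeping it $\le\al$, contradicting the minimality of $d$. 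Hence $\al-\gam<\del\le\al$.

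Next I would establish $d\ge0$ by showing $d=-1$ fails the defining inequality: there $\del=\be\gam^{-1}p+\gam$, which is at least $\gam>\al$ since $p>0$ and $\gam>\al$. As $\del$ decreases in $d$, no $d<0$ works either, so the minimal $d$ is nonnegative.

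Finally, for $d\le\be-1$ it suffices to check that $d=\be-1$ already satisfies $\del\le\al$. Clearing denominators, that inequality is equivalent to $p\le\gam\be^{-1}\bigl(\al+\gam(\be-1)\bigr)$, and the right-hand side simplifies to exactly $\gam^2-1$ after using $\gam(\gam-\al)=\be$ (which follows from $\gam+\lam=\al$ and $\gam\lam=-\be$). Since $p\le\ceil{\gam^2}-2<\gam^2-1$, the inequality holds, so the minimal $d$ is at most $\be-1$, completing the squeeze $0\le d\le\be-1$.

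The only computation requiring any care is the simplification $\gam\be^{-1}\bigl(\al+\gam(\be-1)\bigr)=\gam^2-1$; otherwise the argument is a routine monotonicity squeeze on $d$, and I do not anticipate any genuine obstacle.
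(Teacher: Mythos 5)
Your proposal is correct and follows essentially the same route as the paper: a monotonicity squeeze on $d$ together with boundary checks at $d=-1$ and $d=\be-1$, and the identity $\gam(\gam-\al)=\be$ to simplify the threshold to $\gam^2-1$. Your derivation of $d\ge 0$ via $\del(d=-1)\ge\gam>\al$ is in fact a touch cleaner than the line in the paper (which has a small slip, writing $-\lam+\gam=\al$ where the correct identity is $\lam+\gam=\al$), but the substance is identical.
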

	\begin{proof}
		Because $p\ge 1$, we have 
		\[
		\be\gam^{-1}p+\gam\ge -\lam+\gam=\al,
		\] 
		so $d\ge 0$.  Because $p\le \ceil{\gam^2}-2<\gam^2-1$, we have
		\[
			\be \gam^{-1}p-(\be-1) \gam<\gam-\be \gam^{-1}=\al,
		\]
		where we used $\gam^2-\be=\al \gam$, so $d\le \be-1$.	We have $\del\le \al$ by definition.  If $\al-\gam \ge\del$, then $\be \gam^{-1}p-\gam(d-1)\le \al$, a contradiction to the definition of $d$.
	\end{proof}
	
	Now that we understand the range that $d$ can take on, we turn to estimating $|T(n_r,q,t)|$.  First consider any $q$ with $0\le q<d$ (if such a $q$ exists).  In this case we have for $b\le g_t$ and $t\le r$ that
	\begin{align*}\hat{a}_{b,t}^--\hat{a}_{b,t}^+&\ge \be p g_{t-1}-qg_{t+1}-\al b= \f{\be p \gam^{-1}-q\gam }{\gam-\lam}\gam^t-\al b+O(\lam^t)\\ &\ge \f{\be p\gam^{-1}-(d-1)\gam-\al }{\gam-\lam}\gam^{t}+o(\gam^t)\\&=\f{\gam+\del-\al}{\gam-\lam}\gam^{t}+o(\gam^t).\end{align*}

	By the previous claim, $\gam+\del-\al>0$. Thus for any $0\le q<d$ and $t$ sufficiently larger than some constant depending only on $p$, $\al$, and $\be$; we have $\hat{a}^-_{b,t}>\hat{a}_{b,t}^+$, and hence $|T(n_r,q,t)|=\max\{0,a_{b,t}^+-a_{b,t}^-\}\cdot g_t=0$ for such $q$ and $t$.  Thus in total we have
	\begin{align}\label{E-Tsmall}
	\sum_{0\le q<d} \sum_t |T(n_r,q,t)|=O(1).
	\end{align}
	
	Observe that $d+1\ge \be \gam^{-2}p-\al \gam^{-1}+1$.  Thus for $q$ with $0<d+1<q<\be$, we have \begin{align*}(q-1)g_{t+1}+\al b&\ge (d+1)g_{t+1}\ge \f{\be \gam^{-2}p-\al \gam^{-1}+1}{\gam-\lam}\gam^{t+1}+O(\lam^t)\\ &= (\be p -\al\gam+\gam^2) g_{t-1}+O(\lam^t)>\be p  g_{t-1}+O(\lam^t),\end{align*}  where we used that $\gam>\al$ for all $\al,\be$.  Thus for any $d+1<q<\be$ we have $\hat{a}_{b,t}^-=(q-1)g_{t+1}+\al b+O(\lam^t)$, and hence {\small\begin{equation}\label{E-D1}|T(n_r,q,t)|=\max\{0,a_{b,t}^+-a_{b,t}^-\}\cdot g_t=g_{t+1}g_t+O(\be^t)=\rec{(\gam-\lam)^2}\gam^{2t+1}+O(\be^t).\end{equation}}
	
	It remains to deal with the cases $q=d$  and $q=d+1$.  In these cases it will be easier to work with an asymptotic version of $T(n_r,q,t)$. To this end, define \begin{align*}\tilde{a}_{b,t}^-&=\max\{0,\f{q-1}{\gam-\lam} \gam^{t+1}+\al b,\f{\be p }{\gam-\lam}\gam^{t-1}\},\\ \tilde{a}_{b,t}^+&=\f{q}{\gam-\lam} \gam^{t+1}+\al b.\end{align*} Technically $\tilde{a}_{b,t}^\pm$ depend on $q$, but we suppress this from the notation.  Let $U(q,t)=\{(a,b)\in \Z^2:\tilde{a}_{b,t}^-<a\le \tilde{a}_{b,t}^+,\ 0\le b\le \rec{\gam-\lam}\gam^t\}$. 
	
	\begin{claim}\label{Cl-Den}
		For all $t\le r$ we have $|T(n_r,q,t)-U(q,t)|=O(\gam^t+\be^t)$.
	\end{claim}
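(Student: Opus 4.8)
\textbf{Proof proposal for Claim~\ref{Cl-Den}.} The plan is to bound the symmetric difference $T(n_r,q,t)\triangle U(q,t)$ (which dominates $\l||T(n_r,q,t)|-|U(q,t)|\r|$) by splitting the pairs that lie in exactly one of the two sets into those caused by the mismatch of the $b$-ranges and those caused by the mismatch of the $a$-bounds, using the closed form $g_k=\f{\gam^k-\lam^k}{\gam-\lam}$ of Lemma~\ref{L-FibLike}(d) to see that everything agrees ``up to small error.'' The starting point is the elementary estimate $\l|g_k-\f{\gam^k}{\gam-\lam}\r|=\f{|\lam|^k}{\gam-\lam}=O(|\lam|^k)=O(\be^k\gam^{-k})$, where we used $\gam\lam=-\be$. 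Recalling from the argument preceding the claim that for $t\le r$ we have $\hat a_{b,t}^+=qg_{t+1}+\al b$ while $\hat a_{b,t}^-=\max\{0,\be pg_{t-1},(q-1)g_{t+1}+\al b\}$, and comparing with the definitions of $\tilde a_{b,t}^{\pm}$, the fact that $\max$ is $1$-Lipschitz in each of its arguments together with $q\le\be-1$, $|q-1|\le\be$ and $\be\le p\le\ceil{\gam^2}-2$ (so $p$ is bounded in terms of $\al,\be$) gives
\[\l|\hat a_{b,t}^+-\tilde a_{b,t}^+\r|+\l|\hat a_{b,t}^--\tilde a_{b,t}^-\r|=O(|\lam|^{t-1})\]
uniformly in $b$, with implied constant depending only on $\al,\be,p$.

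Now split the count of pairs lying in exactly one of $T(n_r,q,t)$ and $U(q,t)$. \emph{(i) $b$-range mismatch.} The indices $b$ occurring in exactly one of the ranges $1\le b\le g_t$ and $0\le b\le\f{\gam^t}{\gam-\lam}$ form a set of size at most $1+\l|g_t-\f{\gam^t}{\gam-\lam}\r|=1+O(\be^t\gam^{-t})$. For each such $b$, the number of admissible $a$ in either set is at most the length of the longer defining interval, and this is bounded by $\max\{(\be-1)g_{t+1}+\al g_t,\ \f{q}{\gam-\lam}\gam^{t+1}\}+O(1)=O(\gam^t)$. Multiplying, this contribution is $O(\gam^t)+O(\be^t)$. \emph{(ii) $a$-bound mismatch.} For each $b$ in the common range $1\le b\le\min\{g_t,\f{\gam^t}{\gam-\lam}\}$, the number of integers lying in exactly one of the intervals $(\hat a_{b,t}^-,\hat a_{b,t}^+]$ and $(\tilde a_{b,t}^-,\tilde a_{b,t}^+]$ is at most $\l|\hat a_{b,t}^+-\tilde a_{b,t}^+\r|+\l|\hat a_{b,t}^--\tilde a_{b,t}^-\r|+2=O(|\lam|^{t-1})+O(1)$. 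Summing over the $O(\gam^t)$ values of $b$ gives $O(\gam^t)+O(\gam^t|\lam|^{t-1})$, and since $\gam^t|\lam|^{t-1}=\gam^t(\be/\gam)^{t-1}=\gam\be^{t-1}=O(\be^t)$, this is $O(\gam^t)+O(\be^t)$ as well. Adding (i) and (ii) yields $|T(n_r,q,t)\triangle U(q,t)|=O(\gam^t+\be^t)$.

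The main obstacle is not any single estimate but the error-term bookkeeping in (i) and (ii): one must notice that although the two $b$-ranges can differ by as many as $\Theta(\be^t\gam^{-t})$ indices --- which is unbounded when $\be$ is large relative to $\gam$ --- each such index only carries $O(\gam^t)$ pairs, so their product is still only $O(\be^t)$; and symmetrically that the per-row discrepancy $O(|\lam|^{t-1})$ in the $a$-bounds, once summed over the $O(\gam^t)$ rows, collapses to $O(\be^t)$ rather than to something of size $\gam^t$ or larger. Keeping track of which quantities are bounded constants (namely $p$, $q$, $\al$, $\be$, $\gam$, $\lam$, $\gam-\lam$) versus which grow with $t$ is the only delicate point.
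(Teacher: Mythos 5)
Your proof is correct and follows essentially the same strategy as the paper. The paper formalizes the two sources of error by introducing an intermediate set $U'(q,t)$ (using the $\hat{a}$ bounds but the continuous $b$-range $0\le b\le\gam^t/(\gam-\lam)$), then bounds $|T\triangle U'|$ (the $b$-range mismatch) and $|U'\triangle U|$ (the $a$-bound mismatch) separately; your items (i) and (ii) are precisely these two comparisons, just carried out directly on the symmetric difference without naming the intermediate set. The estimates, the use of $\gam\lam=-\be$ to collapse $\gam^t|\lam|^t$ to $\be^t$, and the bookkeeping are all the same.
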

	\begin{proof}
		Let $U'(q,t)=\{(a,b)\in \Z^2:\hat{a}_{b,t}^-<a\le \hat{a}_{b,t}^+,\ 0\le b\le \rec{\gam-\lam}\gam^t\}$.  Any pair $(a,b)\in U'(q,t)$ that is not in $T(n_r,q,t)$ must have either $b=0$ or $g_t<b\le \rec{\gam-\lam}\gam^t$.  Note that the number of $b$ in this latter range is at most $\ceil{|g_t-\rec{\gam-\lam}\gam^t|}\le1+\rec{\gam-\lam}(-\lam)^t$.  For these $b$, there are at most $\hat{a}_{b,t}^+-\hat{a}_{b,t}^-+1\le g_{t+1}+1=O(\gam^t)$ values that $a$ can take on. Thus $U'(q,t)$ has at most $O((1+\lam^t)\gam^t)=O(\gam^t+\be^t)$ more pairs than $T(n_r,q,t)$.  Using similar logic, we find that the same bound holds for the number of pairs in $T(n_r,q,t)$ but not $U'(q,t)$, and hence $||T(n_r,q,t)|-|U'(q,t)||=O(\gam^t+\be^t)$.
		
		Similarly, note that $\ceil{|\hat{a}_{b,t}^+-\tilde{a}_{b,t}^+|}$ and $\ceil{|\hat{a}_{b,t}^--\tilde{a}_{b,t}^-|}$ are $O(1+\lam^t)$.  Thus for any fixed $b$ there are at most $O(1+\lam^t)$ values of $a$ such that $(a,b)\in U(q,t)$ and $(a,b)\notin U'(q,t)$, and vice versa.  Thus we have $|U'(q,t)-U(q,t)|=O(\gam^t+\be^t)$, and from this the result follows.
		
		\iffalse For reference, the point with the $O(1)$ term is that the range for values of $b$ could be really small but the moment there's at most one you pay a cost of $\gam^t$; i.e. really you need to take the ceiling\fi
	\end{proof}
	
	From this point forward we will omit floors and ceilings whenever they do not significantly affect our computations.  We first consider $U(d,t)$.  One can verify from the definition of $d$ that for $q=d$ we have $\tilde{a}_{b,t}^-=\f{\be p }{\gam-\lam}\gam^{t-1}$ for all $b\le \rec{\gam-\lam}\gam^t$.  Thus \[\tilde{a}_{b,t}^+-\tilde{a}_{b,t}^-=\f{\gam d-\be \gam^{-1}p}{\gam-\lam }\gam^t+\al b=\f{-\del}{\gam-\lam}\gam^t+\al b.\]  Hence for each fixed value of $b$, the number of $(a,b)$ we have in $U(d,t)$ will be $\max\{0,\f{-\del}{\gam-\lam}\gam^t+\al b\}$.  Note that this value is non-zero precisely when $b\ge \f{\al^{-1}\del}{\gam-\lam}\gam^t:=b'$.  We conclude that 
	\begin{align}\label{E-Use}
	|U(d,t)|&=\sum_{b=b'}^{\gam^t/(\gam-\lam)}\l(\f{-\del}{\gam-\lam}\gam^t+\al b\r).\end{align}
	Using the general fact that $\sum_{i=x}^y z+i=z(y-x+1)+\half(y^2-x^2)+O(y)$, we find that this equals
	\begin{align}\label{E-D2}\f{-\del+\al^{-1}\del^2}{(\gam-\lam)^2}\gam^{2t}+ \f{\al-\al^{-1}\del^2}{2(\gam-\lam)^2}\gam^{2t}+O(\gam^t)=\f{\al-2\del+\al^{-1}\del^2}{2(\gam-\lam)^2}\gam^{2t}+O(\gam^t).
	\end{align}
	
	Now consider $U(d+1,t)$.  Again let $b':=\f{\al^{-1}\del }{\gam-\lam}\gam^t$.  One can verify that we have $\tilde{a}_{b,t}^-=\f{d}{\gam-\lam}\gam^{t+1}+\al b$ if $b\le b'$ and $\tilde{a}_{b,t}^-=\f{\be p }{\gam-\lam}\gam^{t-1}$ otherwise. First we count the pairs $(a,b)$ with  $b\le b'$. Because $\tilde{a}_{b,t}^-=\f{d}{\gam-\lam}\gam^{t+1}+\al b$, for each of the $b'$ possible values for $b$ in this range, there are exactly $\tilde{a}_{b,t}^+-\tilde{a}_{b,t}^-=\rec{\gam-\lam}\gam^{t+1}$ choices for $a$, so in total there are $\f{\al^{-1}\del \gam}{(\gam-\lam)^2}\gam^{2t}+O(\gam^t)$ many pairs with $b$ in this range.  For pairs with $b\ge b'$, we again take $\tilde{a}_{b,t}^+-\tilde{a}_{b,t}^-$ and sum over all $b$ to find that the number of such pairs is
	\begin{equation}\label{E-DInt}
	\sum_{b=b'}^{\gam^t/(\gam-\lam)} \f{\gam-\del}{\gam-\lam}\gam^t+\al b=\f{\al-2\del+\al^{-1}\del^2+2\gam-2\al^{-1}\del\gam}{2(\gam-\lam)^2}\gam^{2t}+O(\gam^t),
	\end{equation}
	where one can use that the summand of \eqref{E-DInt} is simply the summand of \eqref{E-Use} plus $\f{\gam}{\gam-\lam}\gam^t$ to quickly compute this sum.
	
	In total then we conclude that
	\begin{align}
	|U(d+1,t)|&=\f{\al^{-1}\del \gam}{(\gam-\lam)^2}\gam^{2t}+\f{\al-2\del+\al^{-1}\del^2+2\gam-2\al^{-1}\del\gam}{2(\gam-\lam)^2}\gam^{2t}+O(\gam^t)\nonumber\\ &=\f{\al-2\del+\al^{-1}\del^2+2\gam}{2(\gam-\lam)^2}\gam^{2t}+O(\gam^t).\label{E-D3}
	\end{align}
	
	Let $S(n_r,q)=\bigcup_{t\le r} S(n_r,q,t)$, and similarly define $T(n_r,q)$ and $U(q)$.  As mentioned earlier, $|S(n_r,q,t)|=0$ for $t>r$, so we have \[|S_p\cap[n_r]|=\sum_{q=0}^{\be-1} |S(n_r,q)|.\] By  Lemma~\ref{L-Den}, Claim~\ref{Cl-Den}, and \eqref{E-Tsmall}; this sum is equal to \begin{equation}\label{E-Den}\f{\be-d}{\be}|U(d)|+\f{\be-d-1}{\be}|U(d+1)|+\sum_{q=d+2}^{\be-1}\f{\be-q}{\be}|T(n_r,q)|+O(\gam^r+\be^r),\end{equation}
	where implicitly we used that the error term from Lemma~\ref{L-Den} is at most $\sum_q\sum_{t\le r} g_t=O(\gam^r)$.  We also recall that \eqref{E-Den} is equal to $|S_p\cap[n_{c,r}]|+O(\be^r)$ by \eqref{E-Red} (note the use of $n_{c,r}$ instead of $n_r$), which is ultimately what we wish to compute.  Note that by \eqref{E-D1}, for all $q$ with $d+2\le q\le \be-1$ we have
	\begin{align*} 
	|T(n_r,q)|=\sum_{t\le r} \rec{(\gam-\lam)^2} \gam^{2t+1}+O(\gam^t)=\f{\gam^3}{(\gam-\lam)^2(\gam^2-1)}\gam^{2r}+O(\gam^r).
	\end{align*}
	Similarly from \eqref{E-D2} we find
	\begin{align*}
	|U(d)|&=\f{\gam^2(\al-2\del+\al^{-1}\del^2)}{2(\gam-\lam)^2(\gam^2-1)}\gam^{2r}+O(\gam^r),\\\end{align*} and from \eqref{E-D3}, \begin{align*} |U(d+1)|&=\f{\gam^2(\al-2\del+\al^{-1}\del^2)}{2(\gam-\lam)^2(\gam^2-1)}\gam^{2r}+\f{\gam^3}{(\gam-\lam)^2(\gam^2-1)}\gam^{2r}+O(\gam^r).
	\end{align*}
	Plugging these values into \eqref{E-Den} and dividing by $n_{c,r}\sim \f{c}{(\gam-\lam)^2}\gam^{2r+1}$ gives the desired result.
\end{proof}

We note that in principle one can use these same methods to prove stronger results.  For example, one can prove density results for $p<\be$ or for $c>\gam/\al$. In these cases the condition $a g_t+\be bg_{t-1}\le n$ becomes non-trivial, which makes the computations messier.  We refer the reader to \cite{CGS} for examples of such results when $\al=\be=1$.

We now prove our corollaries to Theorem~\ref{T-Den}.

\begin{proof}[Proof of Corollary~\ref{C-pLarge}]
	By looking at the formula of Theorem~\ref{T-Den}, we see that we need to show $d=\be-1$ for these values of $p$.  Note that from Claim~\ref{Cl-d} we already know $d\le \be-1$, so it remains to show $d>\be-2$, or equivalently that $\be \gam^{-1} p-\gam (\be-2)-\al>0$.  This is equivalent to \begin{align*}
	p&>\gam^2 -2\be^{-1}\gam^2+\al\gam\be^{-1}=(1-\be^{-1})\gam^2+\be^{-1}\gam(\al-\gam)=(1-\be^{-1})\gam^2-1,
	\end{align*}
	where in the last equality we used $\al-\gam=\lam$ and $\be^{-1}\gam=-\lam^{-1}$.  Because $\floor{x}>x-1$, this result holds for $p\ge \floor{(1-\be^{-1})\gam^2}$ as desired.
\end{proof}

\begin{proof}[Proof of Corollary~\ref{C-be=1}]
	The first result follows by plugging $\be=1$ into Corollary~\ref{C-pLarge},  and then using $\gam^2-1=\al\gam$ and that $\al-2\gam^{-1}p+\al^{-1}\gam^{-2}p^2=(\al^{1/2}-\al^{-1/2}\gam^{-1}p)^2$.  The second result follows from $1-\al\gam^{-1}=\gam^{-2}$.
\end{proof}
\section{Slowest Slow Walks}\label{S-SS}
Before proving Theorem~\ref{T-Slowest}, we first consider a slightly more general setting.  We say that a set $T$ is valid if \[T\sub \{(\al,\be):\al,\be\ge1,\ \gcd(\al,\be)=1\}.\]  Given any valid $T$, define $\ss_T(n)=\max_{(\al,\be)\in T}\s(n)$ and $\S_T(n)=\{(\al,\be):\s(n)=\ss_T(n)\}$.  Thus $\ss(n)$ and $\S(n)$ correspond to the case when $T$ is every pair of relatively prime positive integers.  It will be of use to define $\Gamma_T=\min\{\gam_{\al,\be}:(\al,\be)\in T\}$, and for ease of notation we will simply write this as $\Gamma$ whenever $T$ is understood.

\begin{thm}\label{T-TSlowest}
	For any valid set $T$, there exists a finite set $R_T$ and number $n_T$ such that $\S_T(n)\sub R_T$ for all $n\ge n_T$.
\end{thm}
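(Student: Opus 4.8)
The plan is to produce $R_T$ and $n_T$ explicitly from the logarithmic bounds of Proposition~\ref{P-Bound}. Put $\Gamma=\Gamma_T=\min\{\gam_{\al,\be}:(\al,\be)\in T\}$. Since $\gam_{\al,\be}>\al$ and $\gam_{\al,\be}>\be^{1/2}$, only finitely many pairs have $\gam_{\al,\be}$ below any fixed bound; in particular this minimum is attained, say at $(\al_0,\be_0)\in T$, and the set $R_T:=\{(\al,\be)\in T:\gam_{\al,\be}<\Gamma^{3}\}$ is finite. I will show $\S_T(n)\sub R_T$ for all large $n$; the degenerate case $T=\varnothing$ is vacuous. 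Note that $\al_0\be_0<\Gamma\cdot\Gamma^{2}=\Gamma^{3}$, since $\gam_{\al_0,\be_0}=\Gamma>\al_0$ and $\gam_{\al_0,\be_0}^{2}=\Gamma^{2}>\be_0$.

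The key point is that $x\mapsto\log_x n$ is decreasing, so a pair whose $\gam_{\al,\be}$ is much larger than $\Gamma$ cannot reach $n$ nearly as slowly as $(\al_0,\be_0)$ can. Concretely, for $n>\Gamma^{3}$ (hence $n>\al_0\be_0$) Lemma~\ref{L-Chicken} gives $s^{\al_0,\be_0}(n)>2$, so Proposition~\ref{P-Bound} yields
\[
\ss_T(n)\ \ge\ s^{\al_0,\be_0}(n)\ \ge\ \half\log_\Gamma n-1 .
\]
On the other hand, for every $n\ge1$ and every pair $(\al,\be)$ we have $\s(n)\le\log_{\gam_{\al,\be}}n+2$; this is Proposition~\ref{P-Bound} when $s(n)>2$, and is trivial otherwise since then $\s(n)=2$ while $\log_{\gam_{\al,\be}}n\ge0$. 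Thus if $(\al,\be)\in T$ has $\gam_{\al,\be}\ge\Gamma^{3}$, then $\log_{\gam_{\al,\be}}n\le\rec{3}\log_\Gamma n$ and so
\[
\s(n)\ \le\ \rec{3}\log_\Gamma n+2 .
\]

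Combining the two displays, every pair $(\al,\be)\in T$ with $\gam_{\al,\be}\ge\Gamma^{3}$ satisfies $\ss_T(n)-\s(n)\ge\rec{6}\log_\Gamma n-3$, which is positive once $n>\Gamma^{18}$. Hence, with $n_T:=\ceil{\Gamma^{18}}+1$, no pair having $\gam_{\al,\be}\ge\Gamma^{3}$ lies in $\S_T(n)$ for $n\ge n_T$, i.e.\ $\S_T(n)\sub R_T$, which proves the theorem. The step requiring the most care is the \emph{uniformity} of $n_T$: a single pair with $\gam_{\al,\be}$ just above $\Gamma^{2}$ (the cutoff one obtains by comparing $\s(n)\le\log_{\gam_{\al,\be}}n+2$ with $\half\log_\Gamma n-1$ in the limit) could stay optimal over a very long range of $n$, so I deliberately cut at $\Gamma^{3}$ instead, which injects a fixed multiplicative gap and makes $n_T$ depend only on $\Gamma$ (hence only on $T$). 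Everything else is routine manipulation of the estimates in Proposition~\ref{P-Bound}. I would add that deriving the \emph{sharp} five-element set $R$ of Theorem~\ref{T-Slowest} requires replacing this crude cutoff by the exact discrete spacing of the values $\gam_{\al,\be}$ and by tracking the additive $+2$ and $-1$ error terms; this is more delicate but unnecessary for the present, weaker statement.
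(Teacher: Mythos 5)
Your proof is correct and follows essentially the same route as the paper's: compare the lower bound $s^{\al_0,\be_0}(n)\ge\half\log_\Gamma n-1$ from Proposition~\ref{P-Bound} against the upper bound $\s(n)\le\log_{\gam_{\al,\be}}n+2$ to force any $(\al,\be)\in\S_T(n)$ to have $\gam_{\al,\be}$ below a fixed power of $\Gamma$. The paper cuts at $\gam<\Gamma^4$ and leaves $n_T$ implicit as ``$n$ sufficiently large,'' whereas you cut at $\gam<\Gamma^3$ and make $n_T=\lceil\Gamma^{18}\rceil+1$ explicit, and you also spell out the trivial $s(n)=2$ case of the upper bound; these are minor refinements of the same argument.
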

\begin{proof}
	Let $(\al',\be')$ be any pair in $T$ with  $\gam_{\al',\be'}=\GAM$.  For $n>\al'\be'$ we have $s^{\al',\be'}(n)>2$ by Lemma~\ref{L-Chicken}, and hence $\ss_T(n)>2$ for sufficiently large $n$.  If $(\al,\be)\in \S_T(n)$ and $\gam:=\gam_{\al,\be}$, then by applying Proposition~\ref{P-Bound} twice we find, for $n$ sufficiently large,  
	\[
	\log_{\gam} n+2\ge \s(n)=\ss_T(n)\ge s^{\al',\be'}(n)\ge \half \log_{\GAM} n-1> \quart \log_{\GAM} n+2.
	\]
	In particular this implies $\log_{\GAM}\gam<4$.  Since $\Gam$ is monotonically increasing in $\al$ and $\be$, the set $R'=\{(\al,\be):\log_{\GAM} \Gam<4\}$ is finite, so we conclude the result by taking $R_T=R'\cap T$.
\end{proof}

It is natural to ask how often a given $(\al,\be)$ lies in $\S_T(n)$.  To this end, for any set of pairs $S$, we define \[\I_T(S)=\{n:S\sub  \S_T(n)\},\ \E_T(S)=\{n:S=\S_T(n)\}.\]
For ease of notation we write $\I_T(\al,\be)$ and $\E_T(\al,\be)$ instead of $\I_T(\{(\al,\be)\})$ and $\E_T(\{(\al,\be)\})$.  We first provide an upper bound on the number of $n$ which have $(\al,\be)\in \S_T(n)$.

\begin{prop}\label{P-Slowest}
	Let $T$ be a valid set.  Given $(\al,\be)\in T$, let $\gam=\Gam$ and $c=\log_{\GAM}\gam$.  Then \[\l|\I_T(\al,\be)\cap[n]\r|=O(n^{2-c}).\]
\end{prop}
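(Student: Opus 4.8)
The plan is to bound the number of $n \le N$ with $(\al,\be) \in \S_T(n)$ by comparing the speed with which the $(\al,\be)$-walk reaches $n$ against the speed of a fixed optimal pair $(\al',\be') \in T$ realizing $\gam_{\al',\be'} = \GAM$. The key point is that if $(\al,\be) \in \S_T(n)$, then $s^{\al,\be}(n) = \ss_T(n) \ge s^{\al',\be'}(n)$, so the $(\al,\be)$-walk must reach $n$ at least as slowly as the fastest available walk does. Using Proposition~\ref{P-Bound}, the left-hand side is at most $\log_\gam n + 2$ and (for $n$ large) the right-hand side is at least $\half \log_{\GAM} n - 1$. But this alone only gives $\log_{\GAM} \gam < 4$ as in Theorem~\ref{T-TSlowest}; to get the quantitative count $O(n^{2-c})$ I need a more refined comparison that tracks $t^{\al,\be}(n)$ precisely rather than just bounding it crudely.

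First I would fix $n$ with $(\al,\be) \in \S_T(n)$ and set $t = t^{\al,\be}(n)$, $s = t+1 = s^{\al,\be}(n)$. By Theorem~\ref{T-GMain} and Lemma~\ref{L-Chicken} (applied to the pair $(\al',\be')$), we have $\ss_T(n) = s \ge s^{\al',\be'}(n) \ge \half \log_{\GAM} n + O(1)$, hence $t \ge \half \log_{\GAM} n + O(1)$, i.e. $\GAM^{2t} \gg n^{1-o(1)}$ — more precisely $n \le C\, \gam^{t}$ for some constant (from Lemma~\ref{L-Chicken}/Proposition~\ref{P-Bound}, $n \le \be g_t g_{t+1} = O(\gam^{2t})$, but the slowness forces the reverse-type estimate $n \ge g_t + \be g_{t-1} = \Omega(\gam^t)$ from Theorem~\ref{T-GMain}). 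Combining $n \ge \Omega(\gam^t)$ with $n \le O(\GAM^{2t})$... wait, that is the wrong direction. The correct chain is: slowness of $(\al,\be)$ relative to $(\al',\be')$ gives, via Proposition~\ref{P-Bound}, $\log_{\GAM} n \le 2 s^{\al',\be'}(n) + O(1) \le 2 s^{\al,\be}(n) + O(1) = 2t + O(1)$, so $n \le \GAM^{2t + O(1)}$; on the other hand $n \le O(\gam^{2t})$ always, but we want a \emph{lower} bound forcing few $n$ for each $t$. The actual mechanism: for a fixed $t$, the numbers $n$ with $t^{\al,\be}(n) = t$ all satisfy $n = a g_t + \be b g_{t-1}$ with $b \le g_t$ and $a \le (\be-1)g_{t+1} + \al b$, so there are $O(\gam^t \cdot \gam^t) = O(\gam^{2t})$ of them, all lying in $[\Omega(\gam^t), O(\gam^{2t})]$.

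So the counting step is: $|\I_T(\al,\be) \cap [n]| \le \sum_{t \le t_{\max}(n)} \#\{m : t^{\al,\be}(m) = t, m \in \S_T\}$, where $t_{\max}(n)$ is determined by $\gam^{t_{\max}} \le O(n)$, i.e. $t_{\max} = \log_\gam n + O(1)$. For each such $t$, the slowness constraint $s^{\al,\be}(m) = t+1 \ge s^{\al',\be'}(m)$ together with Proposition~\ref{P-Bound} forces $\log_{\GAM} m \le 2(t+1) + O(1)$, i.e. $m \le \GAM^{2t + O(1)}$. Hence for fixed $t$ the number of valid $m$ is at most the number of $m = a g_t + \be b g_{t-1} \le \GAM^{2t+O(1)}$ with $1 \le b \le g_t$, which is $O(\GAM^{2t}/g_t) = O(\GAM^{2t} \gam^{-t})$. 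Summing the geometric series $\sum_{t \le \log_\gam n + O(1)} O(\GAM^{2t}\gam^{-t}) = O\big((\GAM^2\gam^{-1})^{\log_\gam n}\big) = O\big(n^{\log_\gam(\GAM^2/\gam)}\big) = O\big(n^{2\log_\gam \GAM - 1}\big) = O(n^{2/c - 1})$. Hmm — that exponent is $2/c - 1$, not $2 - c$; since $c = \log_{\GAM}\gam \le 1$ and the stated answer is $2-c$, I must instead count using the bound $m \le O(\gam^{2t})$ but with the \emph{range restriction} coming from slowness. Let me reconsider: the right count per $t$ is the number of $m \in \S_T$ with $t^{\al,\be}(m) = t$; slowness says such $m$ have $m \le \GAM^{2t+O(1)}$, and from $m = ag_t + \be b g_{t-1}$ with $b \le g_t$, $a \ge 1$, the count is $O(\GAM^{2t}/g_t) + O(g_t)$; but also we need $m$ to range over $[n_0, n]$ so $t \le \log_{\GAM} n /2 + O(1)$ from $m \ge \Omega(\gam^t)$...

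\begin{proof}
The idea is to estimate, for each value of $t$, how many $m \le n$ with $t^{\al,\be}(m) = t$ can lie in $\I_T(\al,\be)$, and then sum over the (logarithmically many) relevant $t$. Throughout, constants in the $O(\cdot)$ notation may depend on $T$ and on $(\al,\be)$.

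Let $(\al',\be') \in T$ be a pair with $\gam_{\al',\be'} = \GAM$. For $m > \al'\be'$ we have $s^{\al',\be'}(m) > 2$ by Lemma~\ref{L-Chicken}, so by Proposition~\ref{P-Bound},
\[
\ss_T(m) \ge s^{\al',\be'}(m) \ge \half \log_{\GAM} m - 1.
\]
Now suppose $m \in \I_T(\al,\be)$ with $m$ large, and set $t = t^{\al,\be}(m)$, so $s^{\al,\be}(m) = t+1$ by Theorem~\ref{T-GMain}. Since $(\al,\be) \in \S_T(m)$, we have $s^{\al,\be}(m) = \ss_T(m)$, and combining with the inequality above and Proposition~\ref{P-Bound} again,
\[
t + 1 = s^{\al,\be}(m) = \ss_T(m) \ge \half \log_{\GAM} m - 1,
\]
so that $m \le \GAM^{2t + O(1)}$.

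On the other hand, by Theorem~\ref{T-GMain} every $m$ with $t^{\al,\be}(m) = t$ satisfies $m = a g_t + \be b g_{t-1}$ with $1 \le b \le g_t$ and $1 \le a \le (\be - 1) g_{t+1} + \al b$. For a fixed $b$ in this range, the condition $a g_t + \be b g_{t-1} \le \GAM^{2t+O(1)}$ restricts $a$ to an interval of length $O(\GAM^{2t} g_t^{-1})$, hence to $O(1 + \GAM^{2t} g_t^{-1})$ integer values. Summing over $1 \le b \le g_t$ and using $g_t = \Theta(\gam^t)$, the number of $m \in \I_T(\al,\be)$ with $t^{\al,\be}(m) = t$ is
\[
O\!\left(g_t + \GAM^{2t} \right) = O\!\left(\GAM^{2t}\right),
\]
since $\gam \le \GAM^2$ (indeed $c = \log_{\GAM}\gam < 4 $, so $\gam < \GAM^4$; and $g_t = O(\gam^t) = O(\GAM^{4t})$, absorbed into the next sum — but more simply $\GAM \ge \gam_{1,1} = \phi > 1$ gives $g_t = O(\gam^t)$ while $\GAM^{2t}$ dominates once we note $\gam = \GAM^c$ with $c < 2$, which follows from $m \ge \Omega(\gam^t)$ and $m \le \GAM^{2t+O(1)}$ forcing $\gam^t \le \GAM^{2t+O(1)}$, i.e. $c \le 2$).

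Finally, such $m$ with $m \le n$ can only occur for $t$ with $\gam^t = O(m) = O(n)$, since by Theorem~\ref{T-GMain} we have $m = a g_t + \be b g_{t-1} \ge g_t + \be g_{t-1} = \Omega(\gam^t)$; thus $t \le \log_\gam n + O(1)$. Summing the geometric series,
\[
\l|\I_T(\al,\be) \cap [n]\r| = \sum_{t \le \log_\gam n + O(1)} O\!\left(\GAM^{2t}\right) = O\!\left(\GAM^{2(\log_\gam n + O(1))}\right) = O\!\left(n^{2\log_\gam \GAM}\right).
\]
Since $c = \log_{\GAM}\gam$, we have $\log_\gam \GAM = 1/c$, so the exponent is $2/c$. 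To obtain the claimed exponent $2 - c$ one instead counts per $t$ using the sharper bound on the number of admissible $m$: for fixed $t$ the slowness forces $m$ into the window $[\Omega(\gam^t), \GAM^{2t+O(1)}]$, and writing $m = ag_t + \be bg_{t-1}$ with $b \le g_t$ this window has $O\big((\GAM^{2t} - \gam^t)/\gam^t \cdot \gam^t\big)$ ...
\end{proof}
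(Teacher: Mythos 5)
Your proposal follows the same overall strategy as the paper (bound $m$ for $m\in\I_T(\al,\be)$ with $t^{\al,\be}(m)=t$ by $\GAM^{2t+O(1)}$ via the slowness comparison with a $\GAM$-realizing pair, count $m$ per $t$, then sum a geometric series), but the per-$t$ count contains a genuine gap and the proof breaks off unfinished, as you noticed yourself. Concretely, after writing $m=ag_t+\be b g_{t-1}$ you restrict $a$ to $O(\GAM^{2t}/g_t)$ values for each fixed $b$, but then sum over the entire range $1\le b\le g_t$, obtaining $O(g_t+\GAM^{2t})=O(\GAM^{2t})$ pairs per $t$. This ignores that the constraint $m\le\GAM^{2t+O(1)}$ also restricts $b$: since $m\ge \be b g_{t-1}=\Omega(b\gam^t)$, one must have $b=O(\GAM^{2t}\gam^{-t})=O(\GAM^{(2-c)t})$. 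The paper handles both coordinates at once by noting $g_t,g_{t-1}=\Omega(\gam^t)$, hence $a+b=O(\GAM^{2t}\gam^{-t})=O(\GAM^{(2-c)t})$, so the number of admissible positive pairs $(a,b)$ — and thus of $m$ with $t^{\al,\be}(m)=t$ in $\I_T(\al,\be)$ — is $O(\GAM^{(4-2c)t})$. Summing over $t\le r$ with $n=\Theta(\GAM^{2r})$ then gives $O(\GAM^{(4-2c)r})=O(n^{2-c})$. Your per-$t$ bound instead yields $O(n^{2/c})$; since $\gam\ge\GAM$ implies $c\ge1$, one has $2/c>2-c$ throughout the relevant range $1\le c<2$, so your bound (even if completed) would be strictly weaker than the claim. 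The fix is simply to count pairs in the triangle $a+b\lesssim\GAM^{(2-c)t}$ rather than in a $g_t\times\GAM^{(2-c)t}$ rectangle.
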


\begin{proof}
	Let $(\al',\be')$ be any pair in $T$ with $\gam_{\al',\be'}=\GAM$, let $\I=\I_T(\al,\be)$, and let $G_t=g_t^{\al',\be'}$.  For the rest of the proof let $g_t=\g_t$.
	
	Fix an integer $t$.  If $\t(m)=t$, then by Theorem~\ref{T-GMain} we have $m=ag_t+\be g_{t-1}$ for some $a,b\ge 1$.  If $m\in \I$, then we require $t^{\al',\be'}(m)\le t$.  In particular, by Lemma~\ref{L-Chicken} we require \[ag_t+\be g_{t-1}=m\le \be G_t G_{t-1}=O(\GAM^{2t}).\]  Because $g_t,g_{t-1}=\Om(\gam^t)$, we have \[a+b=O(\GAM^{2t}\gam^{-t})=O(\GAM^{(2-c)t}).\]  There are at most $O(\GAM^{(4-2c)t})$ positive $a,b$ satisfying this, and because $m$ is uniquely determined by $a$ and $b$ given $t$, we conclude that there are at most $O(\GAM^{(4-2c)t})$ integers $m$ with $\t(m)=t$ and $m\in \I$.
	
	Let $r$ be the smallest integer such that $n\le \be G_r G_{r-1}$, and note that $n=\Theta(\GAM^{2r})$.  By summing our above bound for all $t\le r$, we find \[|\I\cap [n]|=O(\GAM^{(4-2c)r})=O(n^{2-c}),\]
	as desired.
\end{proof}

We do not suspect that this bound is sharp in general.  However, it is strong enough to give the following.
\begin{cor}\label{C-Slowest}
	If $T$ is a valid set such that there exists a unique pair $(\al',\be')\in T$ with $\gam_{\al',\be'}=\GAM$, then \[n^{-1}|\E_T(\al',\be')\cap[n]|\sim 1.\]  That is, almost every $n$ has $\S_T(n)=\{(\al',\be')\}$.
\end{cor}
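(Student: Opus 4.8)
The plan is to show that the complement of $\E_T(\al',\be')$ in $\N$ has density zero. First I would note that for $n>\al'\be'$ we have $s^{\al',\be'}(n)>2$ by Lemma~\ref{L-Chicken}, hence $\ss_T(n)\ge s^{\al',\be'}(n)>2$; in particular $\S_T(n)$ is nonempty for all such $n$. Consequently, for $n>\al'\be'$ the equality $\S_T(n)=\{(\al',\be')\}$ fails precisely when $\S_T(n)$ contains some pair $(\al,\be)\ne(\al',\be')$, i.e.\ when $n\in\I_T(\al,\be)$ for some such pair.

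Next I would invoke Theorem~\ref{T-TSlowest}: for all $n\ge n_T$ every element of $\S_T(n)$ lies in the finite set $R_T$, and by the construction in that proof $R_T\sub T$. Since by hypothesis $(\al',\be')$ is the unique pair of $T$ with $\gam_{\al',\be'}=\GAM$, it is a fortiori the unique such pair of $R_T$, so every $(\al,\be)\in R_T\sm\{(\al',\be')\}$ has $\gam_{\al,\be}>\GAM$, whence $c_{\al,\be}:=\log_\GAM\gam_{\al,\be}>1$. Combining this with the previous paragraph, for all $n\ge n_T$ we obtain the containment
\[
[n]\sm\E_T(\al',\be')\ \sub\ \{1,\ldots,n_T-1\}\ \cup\!\!\bigcup_{(\al,\be)\in R_T\sm\{(\al',\be')\}}\!\!\big(\I_T(\al,\be)\cap[n]\big).
\]

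Finally I would apply Proposition~\ref{P-Slowest} to each of the finitely many pairs $(\al,\be)\in R_T\sm\{(\al',\be')\}$: since $2-c_{\al,\be}<1$, this gives $|\I_T(\al,\be)\cap[n]|=O(n^{2-c_{\al,\be}})=o(n)$. Summing these finitely many $o(n)$ bounds together with the $O(1)$ term $n_T-1$ yields $|[n]\sm\E_T(\al',\be')|=o(n)$, and therefore $n^{-1}|\E_T(\al',\be')\cap[n]|\to1$, which is the claim. There is no genuine difficulty here beyond assembling Theorem~\ref{T-TSlowest} and Proposition~\ref{P-Slowest}; the only points that need a moment's care are the nonemptiness of $\S_T(n)$ (so that failing to be $\{(\al',\be')\}$ really forces an extra slow pair) and the fact that the union above is indexed by a finite set, so the error terms add harmlessly.
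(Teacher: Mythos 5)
Your proof is correct and follows essentially the same route as the paper: invoke Theorem~\ref{T-TSlowest} to restrict $\S_T(n)$ to the finite set $R_T$ for large $n$, then apply Proposition~\ref{P-Slowest} to each of the finitely many pairs in $R_T$ with $\gam_{\al,\be}>\GAM$ to show they contribute $o(n)$. The only presentational difference is that the paper decomposes $[n]$ into the sets $\E_T(S)$ over subsets $S\sub R_T$ and bounds each $\E_T(S)\sub\I_T(\al,\be)$, whereas you bound the complement directly by $\bigcup_{(\al,\be)\ne(\al',\be')}\I_T(\al,\be)$; both are fine, and your version makes the needed nonemptiness of $\S_T(n)$ slightly more explicit.
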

\begin{proof}
	Let $R_T$ be as in Theorem~\ref{T-TSlowest}.  For each pair $(\al,\be)\in R_T\sm  \{(\al',\be')\}$, we have $\gam_{\al,\be}>\GAM$ by assumption, so  $n^{-1}|\I_T(\al,\be)\cap[n]|=o(1)$ by Proposition~\ref{P-Slowest}.  Because $\E_T(S)\sub \I_T(\al,\be)$ whenever $(\al,\be)\in S$, this implies that $n^{-1}|\E_T(S)\cap[n]|=o(1)$ for each of the finitely many subsets $S\sub R_T$ with $S\ne \{(\al',\be')\}$.  Because every $m$ with $n_T\le m\le n$ has $m\in \E_T(S)$ for some $S\sub R_T$, we conclude the result.
\end{proof}

We now use the ideas developed in this section to prove Theorem~\ref{T-Slowest}.
\begin{proof}[Proof of Theorem~\ref{T-Slowest}]
	Because $n>1$, we have $s^{1,1}(n)>2$ by Lemma~\ref{L-Chicken}, and hence $\ss(n)>2$.  If $(\al,\be)\in \S(n)$, then by applying Proposition~\ref{P-Bound} twice we find 
	\[
		\log_{\Gam} n+2\ge \s(n)=\ss(n)\ge s^{1,1}(n)\ge \half \log_\phi n+2.
	\]
	For $n>1$, this inequality is equivalent to $\log_\phi \gam \le 2$.  Note that $\log_\phi \gam_{3,1}\ge 2.4$, $\log_\phi \gam_{2,2}\ge 2.08$, and $\log_\phi \gam_{1,5}\ge 2.1$.  Because $\gam_{\al,\be}$ is monotonic in $\al$ and $\be$, we conclude that no element not in $R$ can be in $\S(n)$.  In other words, $\S(n)\sub R$.
	
	The above result implies $\ss(n)=\max \{\s(n): (\al,\be)\in R\}$.  Because $R$ is a finite set, one can compute (in finite time) $\ss(n)$ and $\S(n)$ for any fixed $n$ by computing $\s(n)$ for each $(\al,\be)\in R$.  In particular, one can verify (using the algorithm of Proposition~\ref{P-Alg}) that \begin{align*}\S(32)=\{(1,1),(1,2)\},\ \S(40)&=\{(1,1),(1,3)\},\ \S(3363)=\{(1,1),(2,1)\},\\ \S(5307721328585529)&=\{(1,1),(1,4)\}.\end{align*} proving the second part of the theorem.  The third part of the theorem follows from Corollary~\ref{C-Slowest} and the monotonicity of $\gam_{\al,\be}$ in $\al$ and $\be$.
\end{proof}

We note that $5307721328585529=g^{1,4}_{39}+4g^{1,4}_{38}$, and in general it seems that $n$ of the form $n=\g_t+\be \g_t$ are more likely to have $(\al,\be)\in \S(n)$.

\section{Further Problems}\label{S-Con}
%If needed I think 6.2 and 6.4 can kind of just be written about without being made explicit.
There are a number of questions about slow walks which remain to be answered.  For example, one could try and extend other results and problems from \cite{CGS} to $(\al,\be)$-walks in general.  There are also many questions one can ask about slowest slow walks.  Recall that for $T$ a set of relatively prime pairs, we define $\ss_T(n)=\max_{(\al,\be)\in T}\s(n)$, $\S_T(n)=\{(\al,\be):\s(n)=\ss_T(n)\}$, and $\GAM=\Gamma_T=\min\{\gam_{\al,\be}:(\al,\be)\in T\}$.  From now on we fix such a set of pairs $T$.  Define $R_T'=\{(\al,\be)\in T: \gam_{\al,\be}<\GAM^2\}$.

\begin{conj}
	There exist infinitely many $n$ with $(\al,\be)\in \S_T(n)$ if and only if $(\al,\be)\in R_T'$.
\end{conj}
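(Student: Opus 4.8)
The plan is to prove the two implications separately; the ``only if'' direction --- equivalently, that if $(\al,\be)\in T$ and $\gam:=\gam_{\al,\be}\ge\GAM^2$ then only finitely many $n$ have $(\al,\be)\in\S_T(n)$ --- is routine, while the ``if'' direction carries essentially all the difficulty. Throughout I take $(\al,\be)\in T$. For ``only if'', fix some $(\al_0,\be_0)\in T$ with $\gam_{\al_0,\be_0}=\GAM$. If $(\al,\be)\in\S_T(n)$ then $s^{\al,\be}(n)=\ss_T(n)\ge s^{\al_0,\be_0}(n)$, so applying Proposition~\ref{P-Bound} to both pairs gives $\log_\gam n+2\ge\half\log_\GAM n-1$ for every sufficiently large such $n$. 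When $\gam>\GAM^2$ this inequality bounds $n$; when $\gam=\GAM^2$ one instead uses Proposition~\ref{P-Slowest}, which gives $|\I_T(\al,\be)\cap[n]|=O(1)$, and since this count is non-decreasing in $n$ it is eventually constant, so $\I_T(\al,\be)$ is finite.

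For the ``if'' direction, assume $(\al,\be)\in T$ with $\gam<\GAM^2$. By Theorem~\ref{T-TSlowest} there is a finite set $R_T$, which we may assume contains $(\al,\be)$, with $\S_T(n)\sub R_T$ for all large $n$; thus $(\al,\be)\in\S_T(n)$ as soon as $s^{\al',\be'}(n)\le s^{\al,\be}(n)$ for every $(\al',\be')\in R_T$. I would try to exhibit, for infinitely many $t$, an integer $n$ with $g^{\al,\be}_t+\be g^{\al,\be}_{t-1}\le n\le C\gam^t$ for a small absolute constant $C$ (say $C=5/4$) whose Theorem~\ref{T-GMain} data has $t^{\al,\be}(n)=t$, so that $s^{\al,\be}(n)=t+1$ is as large as possible relative to $n$; such $n$ exist, for instance $n=g^{\al,\be}_t+\be g^{\al,\be}_{t-1}$. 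The key observation is that $s^{\al',\be'}(n)\ge t+2$ if and only if $n=a' g^{\al',\be'}_{t+1}+\be' b' g^{\al',\be'}_t$ for some positive integers $a',b'$ --- this follows from Lemma~\ref{L-Form}(a) together with the fact that all terms of a positive walk are positive. Hence $s^{\al',\be'}(n)\le t+1=s^{\al,\be}(n)$ precisely when $n$ admits no such representation. For a competitor with $\gam_{\al',\be'}\ge\gam$ this holds automatically once $C$ is small enough, since then $n<g^{\al',\be'}_{t+1}+\be' g^{\al',\be'}_t$; the substance is to secure it for the finitely many competitors with $\gam_{\al',\be'}<\gam$.

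The main obstacle is exactly this last step: one must produce an $n$ that is slow for $(\al,\be)$ and yet, for each competitor $(\al',\be')$ with $\gam':=\gam_{\al',\be'}<\gam$ (necessarily $\GAM\le\gam'<\gam<\GAM^2$, so $\gam<(\gam')^2$), is not generated by any $(\al',\be')$-walk at step $\ge t+2$. Heuristically this is precisely the favourable regime: a ``generic'' integer of size $\approx\gam^t$ should have $s^{\al',\be'}$ near $\half\log_{\gam'}(\gam^t)\approx\half t\log_{\gam'}\gam<t+1$ since $\gam<(\gam')^2$, and a Frobenius-type count shows that the proportion of $n\in[\gam^t,C\gam^t]$ that are ``bad'' for a fixed such $(\al',\be')$ is $O((\gam/(\gam')^2)^t)=o(1)$. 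However, a naive union bound does not close the argument, because the set of $n\in[\gam^t,C\gam^t]$ with $t^{\al,\be}(n)=t$ also has vanishing density in that window --- indeed only $O_C(1)$ such $n$ --- so densities alone do not force a good $n$ to exist. Making this work seems to require genuine structure: for instance a simultaneous-congruence construction producing a single $n$ with a prescribed base-$g^{\al,\be}$ representation of level $t$ and, for every relevant competitor, a prescribed base-$g^{\al',\be'}$ representation of level at most $t$ (the size constraints being mutually compatible precisely because $\gam<(\gam')^2$ for each); or a direct analysis of a canonical slow number such as $n=g^{\al,\be}_t+\be g^{\al,\be}_{t-1}$, showing $s^{\al',\be'}(n)\le t+1$ for infinitely many $t$, which amounts to an equidistribution statement for $g^{\al,\be}_t$ modulo $g^{\al',\be'}_s$. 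Carrying out either of these is, I expect, the crux of the conjecture.
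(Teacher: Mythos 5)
The statement you set out to prove is explicitly labelled a \emph{conjecture} in Section~\ref{S-Con}: the paper leaves it open, remarks only that a sharpened reading of the proof of Theorem~\ref{T-TSlowest} reduces the ``only if'' direction to the boundary case $\gam_{\al,\be}=\GAM^2$, and proposes a second conjecture (that $(\al,\be)\in \S_T(\g_t+\be\g_{t-1})$ infinitely often) as a possible route to the ``if'' direction. So there is no paper proof to compare against, and the honest reading of your submission is that it too is not a proof. You say as much for the ``if'' direction, where you correctly pinpoint $n=\g_t+\be\g_{t-1}$ as the natural candidate family and identify the simultaneous-representation / density obstruction as the crux --- this mirrors the paper's own assessment.

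The one place where you claim more than your argument delivers is the ``only if'' boundary case $\gam_{\al,\be}=\GAM^2$. You invoke Proposition~\ref{P-Slowest} to conclude $|\I_T(\al,\be)\cap[n]|=O(1)$, hence finiteness. But inspect the proof of that proposition: it derives a per-level bound $O(\GAM^{(4-2c)t})$ on the number of relevant $m$ with $\t(m)=t$, and then sums over $t\le r$ with $r$ chosen so that $n=\Theta(\GAM^{2r})$, reporting the sum as $O(\GAM^{(4-2c)r})$. That collapse of the geometric series to its last term is only valid when $4-2c>0$, i.e.\ $c<2$. At $c=2$ every term of the sum is $O(1)$ and the argument actually yields $O(r)=O(\log n)$, not $O(1)$. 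An $O(\log n)$ bound does not give finiteness of $\I_T(\al,\be)$, so the boundary case remains open --- which is precisely what the paper flags. Invoking the stated exponent $O(n^{2-c})$ at $c=2$ is circular: the formula formally specializes to $O(1)$, but the proof behind it does not deliver that bound there.

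So: your ``only if, $\gam>\GAM^2$'' argument (via Proposition~\ref{P-Bound} applied twice) is sound and is the same reduction the paper sketches; your ``only if, $\gam=\GAM^2$'' argument has a genuine gap; and your ``if'' direction is, as you acknowledge, a description of the obstacle rather than a proof --- the same obstacle the paper records by leaving the statement as a conjecture and proposing the $\g_t+\be\g_{t-1}$ construction as a way in.
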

We note that a sharper analysis of the proof of Theorem~\ref{T-TSlowest} shows that one can take $R_T\sub \{(\al,\be)\in T:\log_{\GAM} \gam_{\al,\be}\le 2\}$, so for the ``only if'' direction one need only verify the case $\gam_{\al,\be}=\GAM^2$\iffalse, which also may follow from a sharper analysis\fi. One possible direction to prove the ``if'' direction is the following.

\begin{conj}
	Let $(\al,\be)\in R_T'$.  There exist infinitely many $t$ such that $(\al,\be)\in \S_T(\g_t+\be \g_{t-1})$.
\end{conj}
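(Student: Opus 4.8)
The plan is to take $n_t:=\g_t+\be\g_{t-1}$ (equivalently $n_t=\w_{t+1}(1,1)$) and to show that for infinitely many $t$ one has $\s(n_t)\ge s^{\al',\be'}(n_t)$ for every $(\al',\be')\in T$. First I would nail down $\s(n_t)$: the representation $n_t=1\cdot\g_t+\be\cdot 1\cdot\g_{t-1}$ satisfies $t\ge 2$, $1\le(\be-1)\g_{t+1}+\al$, $1\le\g_t$, and $1-\al-\ell\g_{t+1}\le 0$ for every $\ell\ge 0$, so by the uniqueness clause of Theorem~\ref{T-GMain} (which applies once $n_t>\al\be$) we get $\a(n_t)=\b(n_t)=1$, $\t(n_t)=t$, and hence $\s(n_t)=t+1$; note also $n_t=\Theta(\gam_{\al,\be}^t)$ by Lemma~\ref{L-FibLike}(d). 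By Theorem~\ref{T-TSlowest} we have $\S_T(n)\sub R_T$ for $n\ge n_T$, so for large $t$ the value $\ss_T(n_t)$ is attained by a pair in $R_T$; since $R_T'\sub R_T$ and $(\al,\be)\in R_T'$, it therefore suffices to produce infinitely many $t$ for which $s^{\al',\be'}(n_t)\le t+1$ simultaneously for all of the finitely many $(\al',\be')\in R_T$.

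The next ingredient is a reformulation of the inequality $s^{\al',\be'}(m)\le t+1$. I would prove that for any base and any $\ell\ge 3$, one has $s^{\al',\be'}(m)\ge\ell$ if and only if $m=y\,g_{\ell-1}^{\al',\be'}+\be' x\,g_{\ell-2}^{\al',\be'}$ for some $x,y\ge 1$: ``if'' is Lemma~\ref{L-Form}(a), and for ``only if'' one takes an $(\al',\be')$-walk $w_k$ with $w_{s^*}=m$, $s^*=s^{\al',\be'}(m)\ge\ell$, and observes that the shifted sequence $v_k:=w_{k+s^*-\ell}$ is again an $(\al',\be')$-walk, has all terms $\ge 1$, and satisfies $v_\ell=m$. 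Thus, with $B_t^{(\al',\be')}:=\{\,y\,g_{t+1}^{\al',\be'}+\be' x\,g_t^{\al',\be'}:x,y\ge 1\,\}$, we have $s^{\al',\be'}(n_t)\le t+1$ iff $n_t\notin B_t^{(\al',\be')}$. The least element of $B_t^{(\al',\be')}$ is $g_{t+1}^{\al',\be'}+\be'g_t^{\al',\be'}=\Theta(\gam_{\al',\be'}^{t+1})$, so whenever $\gam_{\al',\be'}\ge\gam_{\al,\be}$ this exceeds $n_t=\Theta(\gam_{\al,\be}^t)$ for all large $t$ (a short computation with the closed forms covers the boundary case $\gam_{\al',\be'}=\gam_{\al,\be}$, including the trivial competitor $(\al',\be')=(\al,\be)$), and hence $n_t\notin B_t^{(\al',\be')}$. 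This disposes of every competitor except the finitely many with $\gam_{\al',\be'}<\gam_{\al,\be}$.

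For such a competitor, since $(\al,\be)\in R_T'$ and $\gam_{\al',\be'}\ge\GAM$ we have the crucial inequality $\gam_{\al,\be}<\GAM^2\le\gam_{\al',\be'}^2$. Now $n_t\in B_t^{(\al',\be')}$ is equivalent to the residue $b':=n_t(\be'g_t^{\al',\be'})^{-1}\bmod g_{t+1}^{\al',\be'}$, taken in $\{1,\dots,g_{t+1}^{\al',\be'}\}$, satisfying $b'\le(n_t-g_{t+1}^{\al',\be'})/(\be'g_t^{\al',\be'})$; by $\gam_{\al,\be}<\gam_{\al',\be'}^2$ the right-hand threshold is $\Theta\big((\gam_{\al,\be}/\gam_{\al',\be'})^t\big)=o(g_{t+1}^{\al',\be'})$. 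Equivalently, $n_t\in B_t^{(\al',\be')}$ forces $n_t$ into a union of residue classes modulo $\be'g_t^{\al',\be'}g_{t+1}^{\al',\be'}$ of relative density $\Theta\big((\gam_{\al,\be}/\gam_{\al',\be'}^2)^t\big)$, and $\sum_t(\gam_{\al,\be}/\gam_{\al',\be'}^2)^t<\infty$. So the theorem would follow from an equidistribution input showing that the sparse sequence $(n_t)$ is not abnormally concentrated in these shrinking ``bad'' classes---for instance $\big|\{t\le T:n_t\in B_t^{(\al',\be')}\}\big|=o(T)$ for each of the finitely many relevant $(\al',\be')$---since then a density-$1$ set of $t$ would be good for all competitors simultaneously.

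The hard part will be exactly this last step. The sequence $(n_t)$ obeys the $(\al,\be)$-recurrence over $\Z$, so it is eventually periodic modulo any fixed integer; but here the modulus $\be'g_t^{\al',\be'}g_{t+1}^{\al',\be'}\asymp\gam_{\al',\be'}^{2t}$ grows with $t$, so there is no single period to exploit, and the relevant residue of $n_t$ is governed by a modular inverse rather than by a real multiple of $n_t$, so the naive Weyl approach via the fractional parts of $t\log(\gam_{\al,\be}/\gam_{\al',\be'})$ does not directly apply. I would try to push it through either by a Diophantine estimate for $\g_t+\be\g_{t-1}$ modulo $g_{t+1}^{\al',\be'}$ (exploiting that $\gam_{\al,\be}$ and $\gam_{\al',\be'}$ generate different quadratic fields in the relevant cases), or by choosing the infinite set of $t$ adaptively, e.g.\ through a window-by-window counting argument; a complete argument here is precisely what is missing. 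The fact that $\sum_t(\gam_{\al,\be}/\gam_{\al',\be'}^2)^t$ converges exactly when $\gam_{\al,\be}<\GAM^2$ is what motivates the hypothesis $(\al,\be)\in R_T'$ and suggests $\GAM^2$ is the right threshold.
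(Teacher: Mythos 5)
This statement is a \emph{conjecture} in the paper, with no proof offered; the paper only remarks that this line of attack motivated the specific $n$ used to exhibit $(1,4)\in\S(n)$ and $\{(1,4)\}=\S(n)$. So there is no paper proof to compare against.

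Your proposal is a sensible reduction, not a proof, and you say as much yourself. The pieces you do supply check out: $\a(n_t)=\b(n_t)=1$, $\t(n_t)=t$ follows from the uniqueness clause of Theorem~\ref{T-GMain} once $n_t>\al\be$; the equivalence $s^{\al',\be'}(m)\ge\ell\iff m\in B_{\ell-2}^{(\al',\be')}$ follows from Lemma~\ref{L-Form}(a) and the shift argument; competitors with $\gam_{\al',\be'}\ge\gam_{\al,\be}$ are eliminated by size (and the boundary case $\gam_{\al',\be'}=\gam_{\al,\be}$ does work out with the closed forms, e.g.\ for $\{(1,6),(2,3)\}$); and the inequality $\gam_{\al,\be}<\gam_{\al',\be'}^2$ coming from $(\al,\be)\in R_T'$ is exactly what makes the relative density $\Theta\bigl((\gam_{\al,\be}/\gam_{\al',\be'}^2)^t\bigr)$ summable. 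But the conjecture is genuinely not proved by these observations alone: summability of the ``measure'' of the bad events does not control whether the deterministic sequence $n_t$ avoids them, since the bad sets are residue constraints modulo $\be'g_t^{\al',\be'}g_{t+1}^{\al',\be'}$, a modulus that changes with $t$, and you have no equidistribution or Diophantine input to rule out that $n_t$ conspires to land in them. This is the missing idea, and it is the entire content of the conjecture; the rest is bookkeeping that, as you correctly note, identifies $\GAM^2$ as the natural threshold but does not close the argument. If you want to make progress, the place to push is the arithmetic of $n_t\bmod g_{t+1}^{\al',\be'}$: for fixed $(\al,\be)$ and $(\al',\be')$ this is a concrete linear-recurrence-mod-linear-recurrence question, and even a nontrivial upper bound on $|\{t\le T:n_t\in B_t^{(\al',\be')}\}|$ would be new.
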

We note that it was by looking at integers of the above type that we found an $n$ with $(1,4)\in \S(n)$.  Using this same approach, we were also able to verify that \begin{align*}\S(22619537)&=\{(2,1)\},\  \S(171)=\{(1,2)\},\ \S(11228332)=\{(1,3)\},\\ \iffalse \S(g_{83}^{1,4}+ 4 g_{82}^{1,4})=\fi&\S(5000966512101628011743180761388223)=\{(1,4)\}.\end{align*}
Thus for every $(\al,\be)\in R$ we have $\S(n)=\{(\al,\be)\}$ for some $n$.  However, other than $n=1$ we have no examples where $|\S(n)|>2$. This discussion motivates the following questions.  Recall $\I_T(S)=\{n:S\sub  \S_T(n)\}$ and $ \E_T(S)=\{n:S=\S_T(n)\}$.

\begin{quest}
	Which sets of pairs $S$ have $|\E_T(S)|>0$? Which have $|\E_T(S)|=\infty$?
\end{quest}

In addition to determining the cardinality of these sets, it would be of interest to determine the growth rates of $|\I_T(S)\cap[n]|$ and $|\E_T(S)\cap [n]|$.

\begin{quest}
	Can one effectively bound $|\I_T(S)\cap[n]|$ and $|\E_T(S)\cap[n]|$ for various sets of pairs $S$?
\end{quest}

An interesting case is $T=\{(1,6),(2,3)\}$.  Note that $\gam_{1,6}=\gam_{2,3}=3$, so Corollary~\ref{C-Slowest} does not apply here.

\begin{quest}
	With $T=\{(1,6),(2,3)\}$, can one bound $|\E_T(1,6)\cap[n]|$ and $|\E_T(2,3)\cap[n]|$?
\end{quest}

We show some computational data regarding this question in the appendix.

In this paper we defined $w_k^{\al,\be}$ to follow a recurrence of depth two.  More generally, if $\boldsymbol{\al}=(\al_0,\ldots,\al_{r-1})$ is any tuple of positive relatively prime integers, one can define an $n$-slow $\boldsymbol{\al}$-walk to be a sequence  $w_k^{\boldsymbol{\al}}$ of positive integers satisfying $w_{k+r}^{\boldsymbol{\al}}=\sum_{i=0}^{r-1}\al_i w_{k+i}^{\boldsymbol{\al}}$ and which generates $n$ as slowly as possible.

\begin{quest}
	What can be said about $n$-slow $\boldsymbol{\al}$-walks?  In particular, what can be said about ``tribonacci walks'' which have $\boldsymbol{\al}=(1,1,1)$?
\end{quest}

Given these definitions, for any set $T$ of tuples of positive relatively prime integers, one could define $\ss_T(n)=\max \{s^{\boldsymbol{\al}}(n)+2-|\boldsymbol{\al}|:\boldsymbol{\al}\in T\}$ and $\S_T(n)=\{\boldsymbol{\al}\in T: s^{\boldsymbol{\al}}(n)=\ss_T(n)\}$.  Note that we subtract the length of $\boldsymbol{\al}$ in the definition of $\ss_T(n)$ to make this quantity finite, as otherwise one could always consider the tuple $(1,1,\ldots,1,n)$ with any number of 1's.  This issue could also be resolved by only considering $T$ consisting of tuples of bounded length.

\begin{quest}
	What can be said about $\ss_T(n)$ and $\S_T(n)$ when $T$ is a set of tuples?
\end{quest}

\section*{Appendix: Computational Data}

We first present results concerning densities of the sets $S_p$.  Throughout we denote the asymptotic densities that Theorem~\ref{T-Den} predicts by gray curves and the actual data by black dots.  We show the data curve for all $n_{c,r}$ with $1\le c\le \gam^2$ and some fixed value $r$, though we recall that Theorem~\ref{T-Den} only gives information in the range $1\le c\le (p-\be+1)\gam/\al$.

We first consider the case $\al=\be=1$.  The data we show corresponds to the case $r=12$ and $p=1$ in the statement of Theorem~\ref{T-Den}.  

\Graph{A1B1}

We note that \cite{CGS} includes figures with this theory curve extended to all $1\le c\le \gam$.  

Next we show data for $\al=2,\be=1,r=7$ and $1\le p\le 4$.

\Graph{A2B1P1}

\Graph{A2B1P2}

\Graph{A2B1P3}

\Graph{A2B1P4}

We now show all four of these data plots for $\al=2,\be=1$ together in a single plot.

\Graph{A2B1All}

We next show data for $\al=1,\ \be=5$, $r=4$, and $5\le p\le 6$.  We note that amongst the data we show, the $p=5$ case here is the only case where the simpler formula of Corollary~\ref{C-pLarge} does not apply.

\Graph{A1B5P5}

\Graph{A1B5P6}

Finally, we show all of the data plots for $1\le p\le 6$.

\Graph{A1B5All}

We next present data regarding slowest slow walks.  Let $i_{\al,\be}(n)=|\{m:m\le n,\ (\al,\be)\in \S(m)\}|$.  Note that Proposition~\ref{P-Slowest} gives an upper bound on this values.  Below we give a plot of $i_{1,2}(n)$ for $1\le n\le 50,000$.  We note that $i_{1,3}(n),\ i_{1,4}(n)$, and $i_{2,1}(n)$ are all too small in this range to extrapolate anything from their plots.

\Graph{SlowI}

Below is a log-log plot of $i_{1,2}(n)$ in the range $30,000\le n\le 50,000$.  The line is $y=(2-\log_{\phi}(2))x-.47$.  As $\gam_{1,2}=2$, this may suggest that in this case the upper bound of Proposition~\ref{P-Slowest} is close to tight. 

\Graph{SlowILog}

Let $T=\{(1,6),(2,3)\}$ and $e_{\al,\be}(n)=n^{-1}|\{m:m\le n,\S_T(m)=\{(\al,\be)\}\}|$.  Recall that for this $T$ Corollary~\ref{C-Slowest} does not apply.  Below we plot $e_{1,6}(n)$ with solid blue dots and $e_{2,3}(n)$ with open black circles for $1\le n\le 500,000$.

\Graph{SlowE}

Based on this data, it seems that both of these values are $\Om(1)$, that $e_{2,3}(n)\ge e_{1,6}(n)$ for all $n$, and that these exact values oscillate in some way.  We emphasize that we do not have proofs of any of these observations.  We end this section with a plot of the ratio $e_{2,3}(n)/e_{1,6}(n)$.

\Graph{SlowERat}


\begin{thebibliography}{99}
	%\bibitem{W} P. Chandra and E. W. Weisstein, Fibonacci Number, {\bf MathWorld}.(online)
	\bibitem{CGS} F. Chung, R. L. Graham, S. Spiro, {\it Slow Fibonacci Walks}.  arXiv:1903.08274 (2019).
	\bibitem{S1} J. H. Cohn, "Recurrent sequences including $N$." Fibonacci Quarterly 29 (1991): 30-36.
	\bibitem{C} T. Cormen, C. Leiserson, R. Rivest, C. Stein, {\it Introduction to Algorithms}.  MIT Press (2009).
	\bibitem{S2} T. Denes, Problem 413, Discrete Mathematics 272, issues 2-3 (2003), 302.
	\bibitem{EB} D. Englund and M. Bicknell-johnso, "Maximal subscripts within generalized Fibonacci sequences." (1997).
	\bibitem{JK} J. Jones and P. Kiss, "Representation of integers as terms of a linear recurrence with maximal index." Acta Acad. Paedagog. Agriensis Sect. Mat.(NS) 25.21-37 (1999): 1998.
	%\bibitem{GKP} R. L. Graham, D. E. Knuth and O. Patashnik, {\it Concrete Mathematics} Addison Wesley, (1989) $7^{th}$ printing (1991), 625pp.+xiii.
	\bibitem{Syl} J. J. Sylvester, {\it On Subinvariants, i.e., Semi-invariants to Binary Quantics of an Unlimited Order}. American Journal of Mathematics 5 (1882): 79-136.
	\bibitem{S3} R. P. Stanley (personal communication - (1981)).
	%\bibitem{S2} R. P. Stanley, Magic Trick Based on Deep Mathematics, \href{https://mathoverflow.net/questions/9754/magic-trick-based-on-deep-mathematics}{https://mathoverflow.net/questions/9754/} (2017).
	%\bibitem{V} N. Vorobiev, {\it Fibonacci Numbers}, Birkha{\"u}ser, (2003), 176pp.
	\bibitem{W2} Eric W. Weisstein, Linear Recurrence Equation, {\bf MathWorld}.(online)
\end{thebibliography}
\end{document}